\theoremstyle{plain}
\newtheorem*{theorem*}{Theorem}
\newtheorem{theorem}{Theorem}[section]
\newtheorem{lemma}[theorem]{Lemma}
\theoremstyle{definition}
\newtheorem{definition}[theorem]{Definition}
\newtheorem{remark}{Remark}
\newtheorem*{proof*}{Proof}
  \title[ additional food ] 
    {Novel dynamics in an additional food provided predator-prey system with mutual interference}
\begin{document}
\maketitle

\centerline{\scshape    Sureni Wickramsooriya$^{1}$, Jonathan Martin$^{2}$, Aniket Banerjee$^{3}$ and Rana D. Parshad$^{3}$}
\medskip
{\footnotesize

   \medskip

 \centerline{1) Department of Pathology, Microbiology and Immunology,}
 \centerline{University of California-Davis,}
   \centerline{ One Shields Avenue, Davis, CA 95616, USA.}

   \medskip
  
 \centerline{2) Department of Mathematics, }
 \centerline{Clarkson University, }
   \centerline{ 8 Clarkson Ave, Potsdam, NY 13699, USA.}  
   \medskip
     \centerline{ 3) Department of Mathematics,}
 \centerline{Iowa State University,}
   \centerline{Ames, IA 50011, USA.}
      \medskip
 }

\begin{abstract}


The provision of additional food (AF) sources to an introduced predator has been identified as a mechanism to improve pest control.
However, AF models with prey dependent functional responses can cause unbounded growth of the predator \cite{S27}. To avoid such dynamics, an AF model with mutual interference effect has been proposed \cite{S02}. The analysis therein reveals that if the quantity of additional food $\xi > h(\epsilon)$, where $\epsilon$ is the mutual interference parameter, then pest eradication is possible, and this is facilitated via a transcritical bifurcation.
We revisit this model and show novel dynamical behaviors. In particular, pest eradication is possible for a tighter range of AF  $g(\epsilon) < \xi < f(\epsilon) < h(\epsilon)$, and can also occur via a saddle node bifurcation.
We observe bi-stability, as well as local bifurcations of Hopf  type. We also prove a global bifurcation, of homoclinic type. This bifurcation in turn is shown to create a non-standard dynamic wherein the pest extinction state becomes an ``almost" global attractor. 
To the best of our knowledge, this is the first proof of existence of such a dynamical structure in AF models.
We discuss our analysis in the context of designing novel bio-control strategies.

\end{abstract}

 \section{Introduction}

Biotic stressors such as pests, pathogens, and weeds have become a threat to the crop yield and efficacy of food products in various ways. For instance, unwanted harmful pests  cause considerable losses of crop yield, both in quantity and quality. The main source to overcome this issue has been chemical measures; particularly including pesticides and insecticides. The consumption of synthetic pesticides such as DDT, BHC, aldrin, dieldrin, captan, and 2,4-D has accelerated by the 1940s (\cite{S03}). However, these pesticides/insecticides have many harmful consequences and negative drawbacks both from an economical and an environmental point of view. Particularly, they contaminate the environment, remaining there for an indefinitely long period. Also, many of them are hazardous chemicals that can cause incredibly high risk even to human health. One potential alternative strategy to this is releasing a natural enemy of the pest to control them. From the modeling point of view, this is simply a predator-prey relationship where, the predator population can grow, reproduce and perhaps eliminate the pest from the system via depredation. They are primarily three ways to introduce the natural enemy concept in the field; conservation of existing natural enemies, introducing new natural enemies and establishing a permanent population (classical biological control), mass rearing, and periodic release, on a seasonal basis (\cite{S04},\cite{S05}, \cite{S06}). A possible drawback of this approach tends to be incomplete elimination of pests due to an insufficient depredation rate. One way to boost the depredation rate is by providing them with an alternative food source (\cite{S07}, \cite{S08}, \cite{S09}, \cite{S10}, \cite{S11}). Some studies such as \cite{S04}, \cite{S05}, \cite{S06} point out that providing non-prey food items would  magnify the abundance, and life history traits, such as longevity, fecundity, or foraging behavior of predators; typically which would directly contribute to the efficacy of depredation.\\

The provision of AF may or may not successfully control the target pest. The quality and quantity of alternative food sources play a vital role in the outcomes of biological pest control targets. In this context, effort has been put into exploring several strategies for integrated pest management programs. To this end mathematical models that incorporate AF for the predator in predator-prey systems with have been intensely studied (\cite{S12}, \cite{S13}, \cite{S42}, \cite{S16}, \cite{S41}). Although these show that  pest extinction can be achieved \cite{S12}, there are some possible complications with such an approach. One potential unrealistic feature of these models is the unbounded growth of the introduced predator population, in certain parameter regimes. This would negatively impact sustainability of the ecosystem via non-target effects \cite{S47}. In addition, \cite{S19} has pointed out that the provision of high quality supplementary food source can cause apparent competition between the two food sources. Consequently, a predator might reduce target prey consumption; perhaps even switch completely to the alternative food source. \\

To avoid the outcome of unbounded predator growth, a possible solution is introducing a limiting factor to this growth (\cite{S16}). The limiting behavior can be modeled into the predator dynamics via mutual interference. Mutual interference \cite{S32a} is defined as the behavioral interactions among feeding organisms, that reduce the time that each individual spends obtaining food, or the amount of food each individual consumes. It occurs most commonly where the amount of food is scarce, or when the population of feeding organisms is large. Early work on mutual/predator interference \cite{S48}, modeled the interference term as $\left(\frac{x}{1+hx}\right)y^{m}$, where $y$ is the predator density, $x$ is prey density, $h$ is the handling time, or the time it takes the predator to kill and ingest the prey, and $0 < m < 1$ is the interference parameter. Others have modeled interference as $\left(\frac{x}{1+hx}\right)^{m_{1}}$ \cite{S55}. One could consider this situation to model a predator with a greater feeding rate, or a more aggressive predator, than one in which $m_{1} = 1$, the classical case. This is clear from simple comparison, $p(x)|_{m_{1}=1} < p(x)|_{0 < m_{1} < 1}$, $\forall x > 0$. Other forms of interference consider the ratio dependent response, $p(x) = \left(\frac{x}{y+x}\right)$, and it is argued that this response is closer to the true predator-prey dynamics of natural systems, mechanistically, empirically as well as behaviorally. Another form of interference is modeled via the Beddington-Deangelis functional response \cite{S33}. The systematic study by Prasad et al. (\cite{S02}) extensively focuses on incorporating a predator mutual interference effect into Srinivasu's original model \cite{S12}. In this modified system, the Holling type II functional response is replaced to incorporate mutual interference by adopting precisely a Beddington-Deangelis functional response. Prasad et al. investigate many interesting dynamical phenomena in this model, including stable coexistence at a lower abundance of prey, by varying the mutual interference term. Their analysis is focused on one coexistence state.


\section{Prior Results and Motivation}

\subsection{A recap of pest dependent responses}
The following \emph{general} model for an introduced predator population $y(t)$ depredating on a target pest population $x(t)$, also provided with an additional food source, has been proposed in the literature \cite{S12, S40}. 
\begin{equation}
\label{Eqn:1g}
\frac{dx}{dt} = F(x,y) =  x\left(1-\frac{x}{\gamma}\right) - f(x,\xi,\alpha) y, \    \frac{dy}{dt} = G(x,y) =  g(x, \xi, \alpha, \beta) y  - \delta y.
\end{equation}
Herein the parameters $\xi, \alpha, \beta, \gamma, \delta$ are all positive constants. $\gamma$ is the carrying capacity of the pest population. $\beta$ is the conversion efficiency of the predator. $\delta$ is the death rate of the predator. $\frac{1}{\alpha}$ is the quality of the additional food provided to the predator and $\xi$ is the quantity of additional food provided to the predator. 
Here $f(x,\xi,\alpha)$ is the functional response of the predator, that is pest dependent but also dependent on the quality and quantity of additional food. Likewise, $g(x,\xi,\alpha)$ is the numerical response of the predator. 
If $\xi = 0$, that is there is no additional food, the model reduces to a classical predator-prey model of Gause type, and $f(x,0,0) = f(x) = g(x,0,0)$. Herein $f$ has the properties of a standard pest/prey dependent functional response.
\begin{remark}
When $\xi=0$ the pest extinction state $(0,y^{*})$ does not exist, and the complete extinction state $(0,0)$, is typically unstable \cite{S12}. Thus engineering 
$f(x,\xi,\alpha), g(x,\xi,\alpha)$ as a means to achieve a pest free state, has immense practical value, and has been well studied. Table \ref{Table:3},  summarizes some of the key results in the literature, in terms of the functional forms used in these models.
\end{remark}
\begin{table}[H]
\caption{Dynamics of AF models}\label{Table:3}
	\scalebox{0.66}{
{\begin{tabular}{|c|c|c|c|c|}
		\hline
		& \mbox{Functional form} & \mbox{Relevant literature} & \mbox{AF requirement}& \mbox{ Effect on pest control}\\
		& &  & &  \\ \hline \hline
		(i) & $f(x,\xi,\alpha) = \frac{x}{1+\alpha \xi + x}$ & \cite{ S40} & $\xi > \frac{\delta}{\beta - \delta \alpha}$ & \mbox{Pest is eradicated, switching AF}\\
		& $g(x,\xi,\alpha) = \frac{\beta (x+\xi)}{1+\alpha \xi + x}$ &  &  &  \mbox{maintains/eliminates predator }\\          \hline
		(ii) & $f(x,\xi,\alpha) = \frac{x^{2}}{1+\alpha \xi^{2} + x^{2}}$  & \cite{S41}  & $\xi >\sqrt{ \frac{\delta}{\beta - \delta \alpha}}$  &  \mbox{Pest is eradicated, switching AF}\\
		& $g(x,\xi,\alpha) = \frac{\beta (x^{2}+\xi^{2})}{1+\alpha \xi^{2} + x^{2}}$ &  & & \mbox{ maintains/eliminates predator } \\ \hline
		(iii) & $f(x,\xi,\alpha) = \frac{x}{(1+\alpha \xi)( \omega x^{2}+1) + x}$   & \cite{S42} & $\xi > \frac{\delta}{\beta - \delta \alpha}$ &  \mbox{Pest is eradicated, switching AF} \\
		& $g(x,\xi,\alpha) = \frac{\beta (x+\xi(\omega x^{2}+1)}{(1+\alpha \xi)(\omega x^{2} + 1) + x}$ &  &  & \mbox{ maintains/eliminates predator}  \\ \hline
		(iv) & $f(x,\xi,\alpha) = \frac{x}{1+\alpha \xi + x + \epsilon y}$ & \cite{S02} &  $\beta \xi = \delta(1+\alpha)$  & \mbox{Pest extinction state stabilises,}  \\
		& $g(x,\xi,\alpha) = \frac{\beta (x+\xi)}{1+\alpha \xi + x + \epsilon y}$ &  & $\beta(\gamma + \xi) =\delta(1+\alpha \xi + \gamma)$  &\mbox{via a saddle-node bifurcation}  \\ \hline
		(v) & $f(x,\xi,\alpha)$ as in (i) + Allee effect in pest & \cite{S43} &  pest extinction state impossible & \mbox{Pest extinction is hindered}  \\
		& $g(x,\xi,\alpha)$ as in (i)  &    &\mbox{via Allee effect}  \\ \hline
		\end{tabular}}}
\vspace{.1cm}		 
     \flushleft
     \end{table}

		 The idea in the works in Table \ref{Table:3}, is to implement a pest management strategy, where  the quantity of AF ($\xi$) is increased, so that $\xi > \xi_{critical}$, where $\xi_{critical}$ depends on the model parameters. Note, $\xi = \xi_{critical}$, is essentially the equation of the predator ($y$-axis, $x=0$) axis. Thus, choosing $\xi > \xi_{critical}$, pushes the vertical predator nullcline to the left past the predator (y-)axis, into the $2^{nd}$ quadrant - so there is no positive interior equilibrium. Via positivity of solutions, trajectories starting in the positive ($1^{st}$) quadrant will move towards the predator axis and ``hit" it, yielding pest extinction.

When the functional response $f$ is of type II, $\xi_{critical} =  \frac{\delta}{\beta- \delta \alpha}$. Thus, the pest cannot be driven extinct for any $ \xi \in  [0, \frac{\delta}{\beta - \delta \alpha})$.
The literature \cite{S07, S10, S11} states that increasing AF beyond this interval, so if $\xi \in ( \frac{\delta}{\beta- \delta \alpha}, \infty )$,
eradicates the pest from the ecosystem in a finite time, and the predators survive only on the AF subsequently. It should be noted that these works \cite{S07, S10, S11} also caution against the possible unbounded growth of the predator. However, this is not proved rigorously. The finite time extinction is not an accurate representation of the dynamics. This was shown in \cite{S44}. Recently we also show that providing AF in the large regime ($\xi \in (\frac{\delta}{\beta- \delta \alpha}, \infty )$) can drive the pest extinct asymptotically (c.f. [Proposition 1, \cite{S44}]) - but will always result in blow-up in infinite time of the predator density \cite{S27}. The results of \cite{S27} are applicable to  the type III response as well.

\subsection{The mutual interference model}
In order to prevent the unbounded predator growth several alternative mechanisms have been proposed.

\begin{itemize}
\item[(i)] The inhibitory effect produced by prey defense, such as via the type IV functional response \cite{S41, S49}.
\item[(ii)] The inhibitory effects produced by predator interference, such as via the Beddington-Deangelis functional response \cite{S02}.
\item[(iii)] The inhibitory effect produced by a purely ratio dependent response \cite{S45, S46}.
\item[(iv)] The inhibitory effect produced by intraspecific predator competition \cite{S27}.
\end{itemize}

All of the above prevent unbounded growth of the predator.
We focus on the AF model that has been proposed by Prasad et al \cite{S02}, described via the following system of differential equations \eqref{model1}. 

\begin{equation}\label{model1}
 \frac{dx}{dt} = x\left(1-\frac{x}{k}\right) - \frac{xy}{1+\alpha \xi+x+\epsilon y}, \
       \frac{dy}{dt} =\frac{\beta (x+\xi)y}{1+\alpha \xi+x+\epsilon y}-\delta y
 \end{equation}

This model was essentially developed by combining the additional food model that was originally derived by Srinivasu et al. \cite{S12}, \cite{S40} with the Beddington–DeAngelis model \cite{S32}, \cite{S33}, \cite{S34} to replace the Holling type II functional response. The target prey density $x$ interacts with the predator whose total population density is denoted by $y.$ The predator is provided with an alternative non prey food source whose quality inversely proportional to the parameter $\alpha$ and the quantity is measured by $\xi.$ The parameter $\gamma$ represents the carrying capacity of the prey and $\beta$ and $\delta$ represent the birth and death rates of predator respectively. The Beddington–DeAngelis functional response incorporates with the mutual interference by the term $\epsilon y$ where the parameter $\epsilon $ represents mutual interference effect among the predators.

Since the system ~\eqref{model1} is Kolmogorov type, the model includes the positively invariant set defined by,
$\phi=\{(x,y)\in {\rm I\!R}\times{\rm I\!R} |  x\geq 0, y\geq 0\}.$ 
\subsection{Past results on mutual interference model}

The analysis in \cite{S02} focused on two separate cases, (i) high interference $\epsilon > 1$ (ii) low interference $0<\epsilon <1$. In (i) there is always   one unique interior equilibrium - if a feasible pest free equilibrium exists it is a saddle. Thus from the point of view of pest eradication, high predator interference is not desirable. In that no matter what quality and/or quantity of additional food are chosen, the pest eradication state cannot be (even locally) attracting. In case (ii) it was shown therein that,

\begin{itemize}

\item[(i)] The predator density is always bounded. It cannot grow past a uniform (in parameter) bound, no matter how the parameters are changed, so as to engineer control of the pest.

\item[(ii)] There can exist one unique interior equilibrium, as long as $\xi<\frac{\delta}{\beta-\delta\alpha-\beta\epsilon}$. 

\item[(iii)] A pest free equilibrium can coexist with the interior equilibrium - in that case the pest free equilibrium is always a saddle.

\item[(iv)] The pest free state is globally stable if $\xi>\frac{\delta}{\beta-\delta\alpha-\beta\epsilon}$. This change of stability occurs through a transcritical bifurcation.

\item[(v)] There can also exist a predator free equilibrium.
\end{itemize}

\subsection{Current approach}

The analysis in \cite{S02} is based on the geometric assumption that the slope of the predator nullcline (say $m_{1}$) in \eqref{model1} is greater than the slope of the tangent line to the prey nullcline (say $m$) at 
$(0,\frac{1+\alpha \xi}{1-\epsilon})$,  that is where it cuts the predator (y) axis. After a few computations this is expressed as,

\begin{equation} \label{eqslopecom1}
    m  <  m_{1} \Longleftrightarrow  
\frac{k(1-\epsilon)-(1+\alpha\xi)}{k\left(1-\epsilon\right)^2} < \frac{\beta-\delta}{\delta \epsilon},
\end{equation}
this yields,
\begin{equation} \label{eqslopecom2}
 \xi>\frac{k(1-\epsilon)\left[\delta-\beta(1-\epsilon)\right]-\delta \epsilon}{\alpha \delta \epsilon}.
\end{equation}

This is the reason that the authors in \cite{S02} derive global stability of the pest free state, when
the condition in (iv) above holds, see Fig. \ref{figure3_2}. This condition follows by choosing the $y$-intercept of the predator nullcline to be large than where the prey nullcline intersects the $y$-axis. This in conjunction with $m_{1} > m$, yields global stability of the pest free state.

This is best summarised in Fig. \ref{figuretwoInt2}. Thus, the fundamental premise of \cite{S02}, is to assume AF introductions $\xi$, such that $\xi > h(\epsilon)$, see Fig. \ref{figuretwoInt2}. In the event that this assumption is changed, novel dynamics (not reported in \cite{S02}) are possible - which yield various novel bio-control scenarios of interest.

The primary aim of this manuscript is as follows,
\begin{itemize}

\item In the current work we consider $\xi < h(\epsilon)$, and provide a much tighter window on the quantities of AF needed for pest eradication. In particular, we are able to derive pest extinction for $h(\epsilon) > f(\epsilon) > \xi > g(\epsilon)$, see Fig. \ref{figuretwoInt2}.

\item To this end we perform a complete analysis of the model proposed in \cite{S16} to recover all the dynamic behaviors of this system. 

\item In particular, we find that the system can possess up to two coexistence equilibrium states. Importantly, the system experiences bi-stable behavior when the system has two coexistence equilibrium states. We discuss several implications of this for bio-control.


\item We explored the dynamical behaviors of these two coexistence states, which revealed standard local bifurcation behaviour such as Hopf bifurcation and saddle node bifurcation. We show that the change of stability of the axial (pest-free) equilibrium from local to global can occur through the saddle node bifurcation - which is a different route to the stability change than the transcritical bifurcation route reported in \cite{S16}.

\item Non standard and global bifurcation behavior such as Homoclinic bifurcation is also shown. This particular bifurcation has several novel implications for bio-control - in particular it leads to the pest free state becoming an ``almost" global attractor.

\item The local stability of the interior equilibrium state does \emph{not} imply its global stability. This is seen from the bi-stability behavior in the case of two coexistence equilibrium states. It is conjectured to be true even in the case of one coexistence equilibrium.

\end{itemize}

\begin{figure}\label{figuretwoInt2}
\centering
\includegraphics[width=12cm]{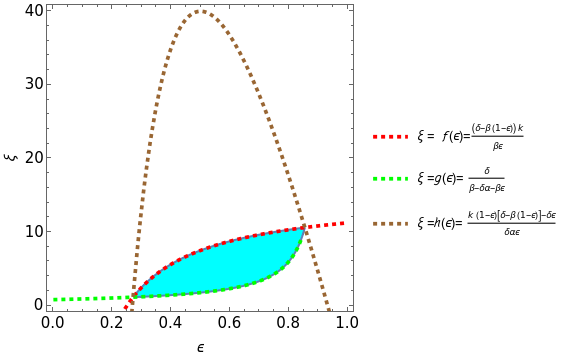}
\caption{The coefficients $b$ and $c$ of the quadratic equation \eqref{eqn:chap3_22} are expressed through the equations, $\xi=\frac{\left[\delta-\beta(1-\epsilon)\right]k}{\beta\epsilon}$ (dashed red) and $\xi=\frac{\delta}{\beta-\delta\alpha-\beta\epsilon}$ (dashed green) in the $(\epsilon,\xi)$ parametric plane. The parametric constraint for which the slope of predator nullcline and  the slope of prey nullcline at the point $\left(0,\frac{1+\alpha\xi}{1-\epsilon} \right)$  are  equivalent $(m=m_1)$ is illustrated by the dashed brown line. The common intersection of all three curves where the parametric space is $\frac{\delta}{\beta-\delta\alpha-\beta\epsilon}<\xi< \frac{\left[\delta-\beta(1-\epsilon)\right]k}{\beta\epsilon},$ is highlighted in light blue. This region corresponds to the system having two positive interior equilibrium points.}
\label{figuretwoInt2}
\end{figure}


We discuss details in the subsequent section.
\section{Further analysis of the mutual interference model}
\subsection{Existence of equilibrium points}\label{model2EqAna}
In this section, the equilibrium analysis of the  model ~\eqref{model1} is performed with an emphasis on the existence of 
two positive interior equilibrium points. We consider the biologically feasible parametric constraints, $\beta> \delta$ and $k>0$ throughout the analysis.

 
The equilibrium solutions of a system (\ref{model1}) are obtained by solving
$\frac{dx}{dt}=\frac{dy}{dt}=0$.\\

\begin{equation}\label{eqn:chap3_17}
\frac{dy}{dt} =\frac{\beta (x+\xi)y}{1+\alpha \xi+x+\epsilon y}-\delta y=\frac{\beta (x+\xi)y-\delta y\left(1+\alpha \xi+x+\epsilon y \right) }{1+\alpha \xi+x+\epsilon y}=0
\end{equation}

Then we yield the equation (\ref{eqn:chap3_18}),
\begin{equation}\label{eqn:chap3_18}
y \left[ \beta (x+\xi)-\delta \left(1+\alpha \xi+x+\epsilon y \right)\right]=0
\end{equation}
The non trivial solutions of the equation (\ref{eqn:chap3_18}) are as follows,
\begin{equation}\label{eqn:chap3_19}
  y=\frac{\left(\beta-\delta\right)}{\delta\epsilon} x+ \frac{\left(\beta-\delta\alpha\right)\xi-\delta}{\delta\epsilon}
\end{equation}
\begin{equation}\label{eqn:chap3_20}
\frac{dx}{dt} = x\left(1-\frac{x}{k}\right) - \frac{xy}{1+\alpha \xi+x+\epsilon y}=
\frac{x\left(k-x\right)\left( 1+\alpha \xi+x+\epsilon y  \right) -k xy}{k\left( 1+\alpha \xi+x+\epsilon y  \right)}=0
\end{equation}

\begin{equation}\label{eqn:chap3_21}
 \left(k-x\right)\left( 1+\alpha \xi+x+\epsilon y  \right) -k y = 0
\end{equation}
 Now, we substitute $y=\frac{\left(\beta-\delta\right)}{\delta\epsilon} x+ \frac{\left(\beta-\delta\alpha\right)\xi-\delta}{\delta\epsilon}
$ into the equation (\ref{eqn:chap3_21}) in order to derive the following quadratic equation of $x$.
 \begin{equation}\label{eqn:chap3_22}
 \beta\epsilon x^{2}+\left[\beta\epsilon\xi-\left[\delta-\beta(1-\epsilon)\right]k\right]x+\left[\left(\beta-\delta\alpha-\beta\epsilon\right)\xi-\delta\right]k= 0
\end{equation}

The analysis presents  multiple equilibrium points including trivial, axial and coexistence states that are listed below as follows:
\begin{itemize}
    \item Trivial equilibrium,$E_{(0,0)}=\left(0,0\right)$
    
     
    \item Predator free equilibrium, $E_{PredEx}=(k,0)$
    
    
    \item Prey free equilibrium, $E_{PreyEx}=\left(0,\frac{\left(\beta-\alpha\delta\right)\xi-\delta}{\delta\epsilon}\right)$

When the parameter corresponding to additional food quantity $(\xi)$ is increased, the prey free equilibrium lifts up along the predator axis ($y-$axis). As long as the amount of additional food density $(\xi)$ exceeds $\frac{\delta}{\beta-\delta\alpha},$ the prey free equilibrium point is positive. 
\item  The quadratic equation(\ref{eqn:chap3_22}) can be precisely formulated as a standard form of $ax^2+bx+c=0$
where, $a=\beta\epsilon,\hspace{3mm}
b=\beta\epsilon\xi-\left[\delta-\beta(1-\epsilon)\right]k,\hspace{3mm}
c=\left[\left(\beta-\delta\alpha-\beta\epsilon\right)\xi-\delta\right]k $. The solutions of (\ref{eqn:chap3_22}) defined the coexistence equilibrium states $E_1=(x_1,y_1)$ and $E_2=(x_2,y_2)$ with the components, $x_1^{*}=\frac{-b-\sqrt{\Delta}}{2a}=\frac{-\left[\beta\epsilon\xi-\left[\delta-\beta(1-\epsilon)\right]k\right]-\sqrt{\Delta}}{2\beta\epsilon}$ and 
$x_2^{*}=\frac{-b+\sqrt{\Delta}}{2a}=\frac{-\left[\beta\epsilon\xi-\left[\delta-\beta(1-\epsilon)\right]k\right]+\sqrt{\Delta}}{2\beta\epsilon}$ 
where, $y_i^{*}=\frac{\left(\beta-\delta\right)}{\delta\epsilon}x_i^{*}+ \frac{\left(\beta-\delta\alpha\right)\xi-\delta}{\delta\epsilon}$ with $i=1,2.$ Here, the discriminant of the quadratic equation (\ref{eqn:chap3_22}) is, 
\begin{equation}
    \Delta= b^2-4ac=\left[\beta\epsilon\xi-\left[\delta-\beta(1-\epsilon)\right]k\right]^2+4\beta\epsilon k\left[\left(\beta-\delta\alpha-\beta\epsilon\right)\xi-\delta\right]
\end{equation}  
\end{itemize}

The primary objective of our work is to investigate the existence of two positive coexistence states. To achieve this, we determine the parametric restrictions required for the existence of two positive interior equilibrium points using Descartes' rule of signs.
     \begin{itemize}
      
    \item $\Delta>0  \hspace{4mm}\Rightarrow$ \\
    $\left[\beta\epsilon\xi-\left[\delta-\beta(1-\epsilon)\right]k\right]^2+4\beta\epsilon k\left[\left(\beta-\delta\alpha-\beta\epsilon\right)\xi-\delta\right]>0$\\
   \vspace{1mm}     
    \item  $\frac{-b}{a}>0$ and $\frac{c}{a}>0$ that is, $b<0 \hspace{4mm}$ and $c>0
    \Rightarrow $

    \begin{equation}
         \frac{\delta}{\beta-\delta\alpha-\beta\epsilon}<\xi<\frac{\left[\delta-\beta(1-\epsilon)\right]k}{\beta\epsilon}
    \end{equation} 
   
\vspace{1mm}
   \item  $y_i^{*}>0 \hspace{4mm}\Rightarrow \hspace{4mm} x_i^{*}>\frac{\delta-(\beta-\delta\alpha)\xi}{k(\beta-\delta)}$
      
\end{itemize}

 According to the above classification, the system essentially has two interior equilibrium points for the parameter space $\frac{\delta}{\beta-\delta\alpha-\beta\epsilon}<\xi<\frac{\left[\delta-\beta(1-\epsilon)\right]k}{\beta\epsilon}.$ On the other hand, there exists only one interior equilibrium point as long as $\xi<\frac{\delta}{\beta-\delta\alpha-\beta\epsilon}$(see Figure \ref{figuretwoInt2}.)



\subsection{Local Stability Analysis} \label{model2StAna}
In this section, we explore the parametric constraints under which the interior equilibrium points are locally asymptotically stable or unstable. This is achieved through linear stability analysis, which involves evaluating the trace and determinant of the Jacobian Matrix at each interior equilibrium point.The standard Jacobian Matrix $\left(J\right)$ at the point $\left(x_2^{*},y_2^{*}\right)$ is evaluated as follows.\\
\begin{equation}
   J =\begin{bmatrix}
1-\frac{2x_2^{*}}{k}-\frac{\left(1+\alpha\xi+\epsilon y_2^{*}\right)y*}{\left(1+\alpha\xi+x_2^{*}+\epsilon y_2^{*}\right)^2}&
  -\frac{\left(1+\alpha\xi+ x_2^{*}\right)x_2^{*}}{\left(1+\alpha\xi+x_2^{*}+\epsilon y_2^{*}\right)^2} \\\\
\frac{\left(1+\alpha\xi+ \epsilon y_2^{*}-\xi\right)\beta y_2^{*}}{\left(1+\alpha\xi+x_2^{*}+\epsilon y_2^{*}\right)^2} & 
\frac{\beta\left(x_2^{*}+\xi \right) \left(1+\alpha\xi+x_2^{*}\right)}{\left(1+\alpha\xi+x_2^{*}+\epsilon y_2^{*}\right)^2}-\delta
\end{bmatrix}=\begin{bmatrix} J_{11} &J_{12}\\J_{21}&J_{22} \end{bmatrix}
\end{equation}


We conduct the linear stability analysis for the interior equilibrium points $E_1=\left(x_1^{*},y_1^{*}\right)$  and  $E_2=\left(x_2^{*},y_2^{*}\right)$. As the first step we derive the trace and determinant at the point $\left(x_i^{*},y_i^{*}\right)$ for $i=1,2.$

 \begin{equation}\label{trace}
Tr_ {(x_i^{*},y_i^{*})}=
 1+\frac{\beta\left(x_i^{*}+\xi\right)\left(1+\alpha\xi+x_i^{*}\right) }{\left(1+\alpha\xi+x_i^{*}+\epsilon y_i^{*}\right)^2}-\left[\frac{2x_i^{*}}{k}+\frac{\left(1+\alpha\xi+\epsilon y_i^{*}\right)y_i^{*}}{\left(1+\alpha\xi+x_i^{*}+\epsilon y_i^{*}\right)^2}+\delta\right]
 \end{equation}

\begin{equation}\label{det}
\begin{split}
    det_ {(x_i^{*},y_i^{*})} & = \left[1-\frac{2x_i^{*}}{k}-\frac{\left(1+\alpha\xi+\epsilon y_i^{*}\right)y_i^{*}}{\left(1+\alpha\xi+x_i^{*}+\epsilon y_i^{*}\right)^2}\right].\left[\frac{\beta\left(x_i^{*}+\xi\right)\left(1+\alpha\xi+x_i^{*}\right) }{\left(1+\alpha\xi+x_i^{*}+\epsilon y_i^{*}\right)^2}-\delta\right]+\\ &
    \left[\frac{\left(1+\alpha\xi+ \epsilon y_i^{*}-\xi\right)\beta y_i^{*}}{\left(1+\alpha\xi+x_i^{*}+\epsilon y_i^{*}\right)^2}\right].\left[\frac{\left(1+\alpha\xi+ \epsilon y_i^{*}-\xi\right)\beta y_i^{*}}{\left(1+\alpha\xi+x_i^{*}+\epsilon y_i^{*}\right)^2}\right]
    \end{split}
\end{equation}

Since the point $(x_i^*,y_i^*)$ is on the non trivial predator nullcline, the points satisfy the equation \eqref{eqn:chap3_19}, thus we yield the following simplified expressions.

\begin{equation}\label{proof3.1_3}
    1+x_i^{*}+\alpha\xi+\epsilon y_i^{*}=1+x_i^{*}+ \alpha\xi+\epsilon\frac{\left(\beta-\delta\right)x_i^{*}}{\delta\epsilon}+\epsilon\frac{\left(\beta-\delta\alpha\right)\xi-\delta}{\delta\epsilon}=
\frac{\beta\left(x_i^{*}+\xi\right)}{\delta}
\end{equation}

 \begin{equation}\label{proof3.1_3_1}
    1+\alpha\xi+\epsilon y_i^{*}=1+\alpha\xi+\epsilon\frac{\left(\beta-\delta\right)x_2^{*}}{\delta\epsilon}+\epsilon\frac{\left(\beta-\delta\alpha\right)\xi-\delta}{\delta\epsilon}=\frac{(\beta-\delta)x_2^{*}+\beta \xi}{\delta}
\end{equation}

To simplify the denominator expressions in \eqref{trace} and \eqref{det}, we introduce a concise notation $R$ to replace $1+\alpha\xi+x_i^{*}+\epsilon y_2$, which appear in the denominators.

\begin{equation}\label{trace2}
   \begin{split}
Tr_ {(x_i^{*},y_i^{*})}=\frac{1}{\delta^2R^2}\left[\beta\left(x_i^{*}+\xi\right)\left[\beta\left(1-\frac{2x_i^{*}}{k}-\delta\right)\left(x_i^{*}+\xi\right)+\delta^2\left(1+\alpha\xi+x_i^{*}\right)\right]-\delta \left[(\beta-\delta)x_i^{*}+\beta\xi\right]y_i^{*}\right]
 \end{split}
\end{equation}
\begin{equation}\label{det1}
   \begin{split}
   det_ {(x_i^{*},y_i^{*})} = -\frac{\beta (x_i^{*}+\xi)}{\delta^3R^4}\left[\beta^2(x_i^{*}+\xi)^2\left(1-\frac{2x_i^{*}}{k}\right)- \delta\left((\beta-\delta)x_i^{*}+\beta\xi\right)y_i^{*}\right].
   \left[(\beta-\delta)x_i^{*}+(\beta-\delta\alpha)\xi -\delta\right]+\\
   \frac{\beta x_i^{*} y_i^{*}}{\delta R^4}\left[(\beta-\delta)x_i^{*}+\beta\xi-\delta\epsilon\right]\left(1+\alpha\xi+x_i^{*}\right)
   \end{split}
\end{equation}

The theorems \eqref{thm1:chap3_2} and \eqref{secondEq} discuss the stability of coexistence equilibrium states, $E_1$ and $E_2$ respectively.

\begin{theorem}\label{thm1:chap3_2}
The interior equilibrium  $E_1=\left(x_1^*,y_1^{*}\right)$ is saddle if $x_1^{*}$ and $y_1^{*}$ lie within the 
intervals, $x_1^{*}>0$  and \\
  $0<y_1^{*}<\frac{(x_1^{*}+\xi)\left[\beta^2(x_1^{*}+\xi)^2 \left(k-2x_1^{*}\right)\right]\left[(\beta-\delta)x_1^{*}+(\beta-\delta\alpha)\xi -\delta)\right]}{\delta k\left[(x_1^{*}+\xi)\left[ (\beta-\delta)x_1^{*}+\beta\xi\right]\left[(\beta-\delta)x_1^{*}+(\beta-\delta\alpha)\xi -\delta)\right] + \delta x_1^{*}\left[(\beta-\delta)x_1^{*}+\beta\xi-\delta\epsilon\right]\left(1+\alpha\xi+x_1^{*}\right)\right]}$ 
\end{theorem}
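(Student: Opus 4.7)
The plan is to reduce the claim to the single inequality $\det(J)\big|_{E_1} < 0$: for a planar system with a hyperbolic linearization, an equilibrium is a saddle exactly when the Jacobian has negative determinant (the characteristic discriminant $\mathrm{tr}^{2}-4\det$ is automatically positive in that case, so the eigenvalues are real with opposite signs, irrespective of the trace). The algebraic shape of the stated upper bound on $y_1^{*}$ already suggests that it arises from solving $\det(J)\big|_{E_1}<0$ linearly in $y_1^{*}$, and my approach is to verify this directly from \eqref{det1}.

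First I would evaluate \eqref{det1} at $E_1=(x_1^{*},y_1^{*})$. The two summands carry the manifestly positive multiplicative factors $\beta(x_1^{*}+\xi)/(\delta^{3}R^{4})$ and $\beta x_1^{*}y_1^{*}/(\delta R^{4})$ respectively, so after clearing the common positive factor $\beta/(\delta^{3}R^{4})$ the sign of $\det(J)\big|_{E_1}$ reduces to the sign of
\[
\delta^{2}x_1^{*}y_1^{*}\bigl[(\beta-\delta)x_1^{*}+\beta\xi-\delta\epsilon\bigr](1+\alpha\xi+x_1^{*}) - (x_1^{*}+\xi)\bigl[\beta^{2}(x_1^{*}+\xi)^{2}\bigl(1-\tfrac{2x_1^{*}}{k}\bigr) - \delta\bigl((\beta-\delta)x_1^{*}+\beta\xi\bigr)y_1^{*}\bigr]M,
\]
where I set $M:=(\beta-\delta)x_1^{*}+(\beta-\delta\alpha)\xi-\delta$. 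The predator nullcline identity \eqref{eqn:chap3_19} gives $y_1^{*}=M/(\delta\epsilon)$, so the hypothesis $y_1^{*}>0$ is precisely $M>0$; this sign underpins the next manipulation. Grouping the two $y_1^{*}$-linear terms on one side and isolating the $y_1^{*}$-free term on the other, the inequality $\det(J)\big|_{E_1}<0$ becomes
\[
(x_1^{*}+\xi)\beta^{2}(x_1^{*}+\xi)^{2}\bigl(1-\tfrac{2x_1^{*}}{k}\bigr)M > y_1^{*}\Bigl\{(x_1^{*}+\xi)\delta\bigl((\beta-\delta)x_1^{*}+\beta\xi\bigr)M + \delta^{2}x_1^{*}\bigl[(\beta-\delta)x_1^{*}+\beta\xi-\delta\epsilon\bigr](1+\alpha\xi+x_1^{*})\Bigr\}.
\]
Multiplying through by $k$, so that $1-2x_1^{*}/k$ becomes $(k-2x_1^{*})/k$, and dividing by the coefficient of $y_1^{*}$ reproduces verbatim the upper bound displayed in the theorem.

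The main obstacle is the sign audit required to divide by the coefficient of $y_1^{*}$ without flipping the inequality. Its first summand is positive from $\beta>\delta$, $M>0$, and $x_1^{*},\xi\ge 0$, but the second summand carries the factor $(\beta-\delta)x_1^{*}+\beta\xi-\delta\epsilon$, whose sign is not immediate and must be traced back to the low-interference, moderate-$\xi$ window identified in Figure \ref{figuretwoInt2} (where $\epsilon<1$ and $\xi$ lies in the two-equilibria band of Section 3.1). A secondary check is that $k-2x_1^{*}>0$ is tacitly required for the displayed bound to be non-vacuous, i.e.\ $x_1^{*}<k/2$, which is natural because $x_1^{*}$ is the smaller of the two roots of \eqref{eqn:chap3_22}. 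Once these sign verifications are in place, the remaining work is routine algebra and the saddle character of $E_1$ follows.
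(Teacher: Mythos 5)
Your proposal takes essentially the same route as the paper: reduce the saddle claim to $\det J\big|_{E_1}<0$ and solve that inequality linearly in $y_1^{*}$ starting from \eqref{det1}, which reproduces the stated bound verbatim; your algebra is correct and in fact more complete than the paper's own proof, which merely asserts that the determinant is negative under the stated condition without carrying out the computation. Your further observation that the final division is only legitimate when the coefficient of $y_1^{*}$ is positive --- in particular the sign check on $(\beta-\delta)x_1^{*}+\beta\xi-\delta\epsilon$ and the tacit requirement $k>2x_1^{*}$ --- pinpoints hypotheses that the paper's statement and proof both leave implicit, so this is a caveat about the theorem as stated rather than a gap in your argument.
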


\begin{proof}
We simplify the equation in \eqref{det1} to show that $det_ {(x_1^{*},y_1^{*})}< 0 .$ \\
Now, when the parameters and $(x_1^{*},y_1^{*})$ satisfy the condition, \\

$y_1^{*}<\frac{(x_1^{*}+\xi)\left[\beta^2(x_1^{*}+\xi)^2 \left(k-2x_1^{*}\right)\right]\left[(\beta-\delta)x_1^{*}+(\beta-\delta\alpha)\xi -\delta)\right]}{\delta k\left[(x_1^{*}+\xi)\left[ (\beta-\delta)x_1^{*}+\beta\xi\right]\left[(\beta-\delta)x_1^{*}+(\beta-\delta\alpha)\xi -\delta)\right] + \delta x_1^{*}\left[(\beta-\delta)x_1^{*}+\beta\xi-\delta\epsilon\right]\left(1+\alpha\xi+x_1^{*}\right)\right]}$, \\

The determinate at $E_1$ is negative under these conditions. Furthermore, both $x_1^{}$ and $y_1^{}$ must be positive to achieve a positive interior equilibrium. Therefore, the first interior equilibrium point $E_1=\left(x_1^{},y_1^{}\right)$ is a saddle.
\end{proof}


\begin{theorem}\label{secondEq}
The stability of the second interior equilibrium point is studied by dividing the dynamics into following cases.
    $E_2= (x_2^{*}, y_2^{*})$ is, 
\begin{enumerate}[label=(\roman*)]
        \item a stable point or stable focus, if the parameters and the equilibrium satisfy \\
        $y_2^{*}>\frac{(x_2^{*}+\xi)\left[\beta^2(x_2^{*}+\xi)^2 \left(k-2x_2^{*}\right)\right]\left[(\beta-\delta)x_2^{*}+(\beta-\delta\alpha)\xi -\delta)\right]}{\delta k\left[(x_2^{*}+\xi)\left[ (\beta-\delta)x_2^{*}+\beta\xi\right]\left[(\beta-\delta)x_2^{*}+(\beta-\delta\alpha)\xi -\delta)\right] + \delta x_2^{*}\left[(\beta-\delta)x_2^{*}+\beta\xi-\delta\epsilon\right]\left(1+\alpha\xi+x_2^{*}\right)\right]} $ \\ and
        $y_2^*>\frac{\beta\left(x_2^*+\xi\right)\left[\beta\left((1-\delta)k-2x_2^*\right)\left(x_2^*+\xi\right)+k\delta^2\left(1+\alpha\xi+x_2^*\right)\right]}{\delta k\left[(\beta-\delta)x_2^*+\beta\xi\right]}$. 


        \item  a weak focus, if   $y_2^{*}=\frac{(x_2^{*}+\xi)\left[\beta^2(x_2^{*}+\xi)^2 \left(k-2x_2^{*}\right)\right]\left[(\beta-\delta)x_2^{*}+(\beta-\delta\alpha)\xi -\delta)\right]}{\delta k\left[(x_2^{*}+\xi)\left[ (\beta-\delta)x_2^{*}+\beta\xi\right]\left[(\beta-\delta)x_2^{*}+(\beta-\delta\alpha)\xi -\delta)\right] + \delta x_2^{*}\left[(\beta-\delta)x_2^{*}+\beta\xi-\delta\epsilon\right]\left(1+\alpha\xi+x_2^{*}\right)\right]}. $\\
        In this case, there exists one or more than one limit cycles surrounding the point $E_2.$
        \item  an unstable focus or node if  \\
        $0<y_2^{*}<\frac{(x_2^{*}+\xi)\left[\beta^2(x_2^{*}+\xi)^2 \left(k-2x_2^{*}\right)\right]\left[(\beta-\delta)x_2^{*}+(\beta-\delta\alpha)\xi -\delta)\right]}{\delta k\left[(x_2^{*}+\xi)\left[ (\beta-\delta)x_2^{*}+\beta\xi\right]\left[(\beta-\delta)x_2^{*}+(\beta-\delta\alpha)\xi -\delta)\right] + \delta x_2^{*}\left[(\beta-\delta)x_2^{*}+\beta\xi-\delta\epsilon\right]\left(1+\alpha\xi+x_2^{*}\right)\right]}. $\\
        At this point, the majority of the trajectories moving towards the pre extinction point and thereby the prey extinction is almost globally asymptotically stable.

\end{enumerate}
\end{theorem}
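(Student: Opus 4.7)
The plan is to carry out linear stability analysis at $E_2 = (x_2^*, y_2^*)$ using the trace and determinant expressions \eqref{trace2} and \eqref{det1}, and to classify $E_2$ by the sign pattern of $\det_{(x_2^*,y_2^*)}$ and $\mathrm{Tr}_{(x_2^*,y_2^*)}$ as $y_2^*$ is moved across the threshold appearing in the three cases. The key algebraic simplification I would use throughout is the identity extracted from the nontrivial predator nullcline \eqref{eqn:chap3_19}, namely $(\beta-\delta)x_i^* + (\beta-\delta\alpha)\xi - \delta = \delta\epsilon\, y_i^*$, which makes one of the heavy brackets in \eqref{det1} proportional to $y_2^*$ and turns the determinant into a form that is affine in $y_2^*$ once a common factor of $y_2^*$ is extracted.

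Once the determinant has been written in this affine form, solving $\det_{(x_2^*,y_2^*)} = 0$ for $y_2^*$ directly reproduces the threshold quoted in cases (i)--(iii), and the sign of the leading coefficient (positive under the standing $\beta>\delta$, $k>0$, and the admissible window for $\xi$ from Section~\ref{model2EqAna}) fixes the direction of the inequality. For case (iii), $y_2^* <$ threshold forces $\det<0$, so the eigenvalues are real of opposite sign; the unstable manifold of $E_2$, together with positivity of the quadrant, steers essentially all trajectories not on the stable manifold towards the pest-extinction axis, giving the ``almost globally asymptotically stable'' conclusion. For case (i), the first inequality yields $\det>0$, while the second inequality is derived by setting $\mathrm{Tr}_{(x_2^*,y_2^*)} = 0$ in \eqref{trace2} and clearing the factor of $k$; this gives $\mathrm{Tr}<0$, hence asymptotic stability, and the sign of $\mathrm{Tr}^2 - 4\det$ discriminates stable node from stable focus.

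Case (ii) is the delicate one and is where I expect the main obstacle. At the stated equality the linearisation degenerates, and I would reinterpret the locus as a Hopf bifurcation curve in parameter space by selecting (say) $\xi$ as bifurcation parameter so that the coexistence branch $(x_2^*(\xi), y_2^*(\xi))$ passes through criticality with $\mathrm{Tr}=0$ and $\det>0$. I would then (a) verify transversality of the eigenvalue crossing of the imaginary axis by differentiating \eqref{trace2} along the branch, and (b) compute the first Lyapunov coefficient of the planar normal form on the centre manifold to produce a limit cycle surrounding $E_2$. The hard part is purely algebraic/structural: the Jacobian entries at $E_2$ are cumbersome enough that obtaining a tractable sign for the Lyapunov coefficient in closed form is non-trivial, and the ``one or more limit cycles'' clause of the statement forces me either to allow a degenerate Hopf (vanishing first Lyapunov coefficient, with a sign-determining higher-order coefficient producing nested cycles) or to supplement the local normal-form calculation with a global Poincar\'e--Bendixson/Dulac argument inside a suitable trapping region to bound the cycle count from above and pin down when ``more than one'' genuinely occurs.
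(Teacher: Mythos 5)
Your overall route---classifying $E_2$ by the signs of the trace and determinant from \eqref{trace2} and \eqref{det1}, using the nullcline identity $(\beta-\delta)x_2^{*}+(\beta-\delta\alpha)\xi-\delta=\delta\epsilon\,y_2^{*}$ to make the determinant affine in $y_2^{*}$, and solving $\det=0$ for $y_2^{*}$ to obtain the common threshold---is exactly the paper's approach. Its proofs of (i) and (iii) are precisely the sign checks you describe, with the second inequality in (i) obtained from requiring $\mathrm{Tr}_{(x_2^{*},y_2^{*})}<0$.

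The gap is in case (ii). The equality defining (ii) is the locus where the \emph{determinant} vanishes, not the trace. The transversality-plus-first-Lyapunov-coefficient programme you outline is the machinery for a Hopf point, which requires $\mathrm{Tr}=0$ together with $\det>0$; at the stated equality one eigenvalue is zero, so there is no pair of purely imaginary eigenvalues and no Hopf normal form on a centre manifold to compute. The paper's own proof of (ii) is only the one-line observation that $\det_{(x_2^{*},y_2^{*})}=0$ there (it then labels $E_2$ a weak focus without further argument), so your proposed analysis, as aimed, would not land on the statement as written. Relatedly, your (correct) remark in case (iii) that $\det<0$ forces real eigenvalues of opposite sign makes $E_2$ a saddle, which conflicts with the theorem's label of ``unstable focus or node'' (the paper's proof calls it a repeller). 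If you pursue this, you must either reconcile the claimed classifications with the sign of the determinant, or re-derive the thresholds for (i)--(ii) from the vanishing of the trace rather than the determinant before any Hopf or limit-cycle argument can be attached.
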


\begin{proof}
We evaluate and simplify the trace and the determinant expressions \eqref{trace} and \eqref{det1} at the point $E_2=(x_2^{*},y_2^{*})$.
   \begin{enumerate}[label=(\roman*)]
      \item Under the parametric restriction of\\
$y_2^{*}>\frac{(x_2^{*}+\xi)\left[\beta^2(x_2^{*}+\xi)^2 \left(k-2x_2^{*}\right)\right]\left[(\beta-\delta)x_2^{*}+(\beta-\delta\alpha)\xi -\delta)\right]}{\delta k\left[(x_2^{*}+\xi)\left[ (\beta-\delta)x_2^{*}+\beta\xi\right]\left[(\beta-\delta)x_2^{*}+(\beta-\delta\alpha)\xi -\delta)\right] + \delta x_2^{*}\left[(\beta-\delta)x_2^{*}+\beta\xi-\delta\epsilon\right]\left(1+\alpha\xi+x_2^{*}\right)\right]}$, \hspace{3mm} $det_ {(x_2^{*},y_2^{*})} $ is positive.\\
Also, under the condition, $y_2^*>\frac{\beta\left(x_2^*+\xi\right)\left[\beta\left((1-\delta)k-2x_2^*\right)\left(x_2^*+\xi\right)+k\delta^2\left(1+\alpha\xi+x_2^*\right)\right]}{\delta k\left[(\beta-\delta)x_2^*+\beta\xi\right]},$
$tr_ {(x_2^{*},y_2^{*})}$ is negative. Therefore, the equilibrium point $E_2=(x_2^{*},y_2^{*})$ is an stable point
      \item  If the parameter space satisfy the equality \\
      $y_2^{*}=\frac{(x_2^{*}+\xi)\left[\beta^2(x_2^{*}+\xi)^2 \left(k-2x_2^{*}\right)\right]\left[(\beta-\delta)x_2^{*}+(\beta-\delta\alpha)\xi -\delta)\right]}{\delta k\left[(x_2^{*}+\xi)\left[ (\beta-\delta)x_2^{*}+\beta\xi\right]\left[(\beta-\delta)x_2^{*}+(\beta-\delta\alpha)\xi -\delta)\right] + \delta x_2^{*}\left[(\beta-\delta)x_2^{*}+\beta\xi-\delta\epsilon\right]\left(1+\alpha\xi+x_2^{*}\right)\right]} $, then $det_ {(x_2^{*},y_2^{*})} =0$. Consequently, the equilibrium point $E_2$ is a weak focus. 
        \item  As long as the the parameter values satisfy the inequality\\
        $0<y_2^{*}<\frac{(x_2^{*}+\xi)\left[\beta^2(x_2^{*}+\xi)^2 \left(k-2x_2^{*}\right)\right]\left[(\beta-\delta)x_2^{*}+(\beta-\delta\alpha)\xi -\delta)\right]}{\delta k\left[(x_2^{*}+\xi)\left[ (\beta-\delta)x_2^{*}+\beta\xi\right]\left[(\beta-\delta)x_2^{*}+(\beta-\delta\alpha)\xi -\delta)\right] + \delta x_2^{*}\left[(\beta-\delta)x_2^{*}+\beta\xi-\delta\epsilon\right]\left(1+\alpha\xi+x_2^{*}\right)\right]},$ the $det_ {(x_2^{*},y_2^{*})} <0$. Thus, the equilibrium point $E_2$ is a repeller. 
\end{enumerate}
\end{proof}


\begin{figure}\label{figure3_1}
\centering
\begin{subfigure}{7cm}
\centering\includegraphics[width=7.7cm]{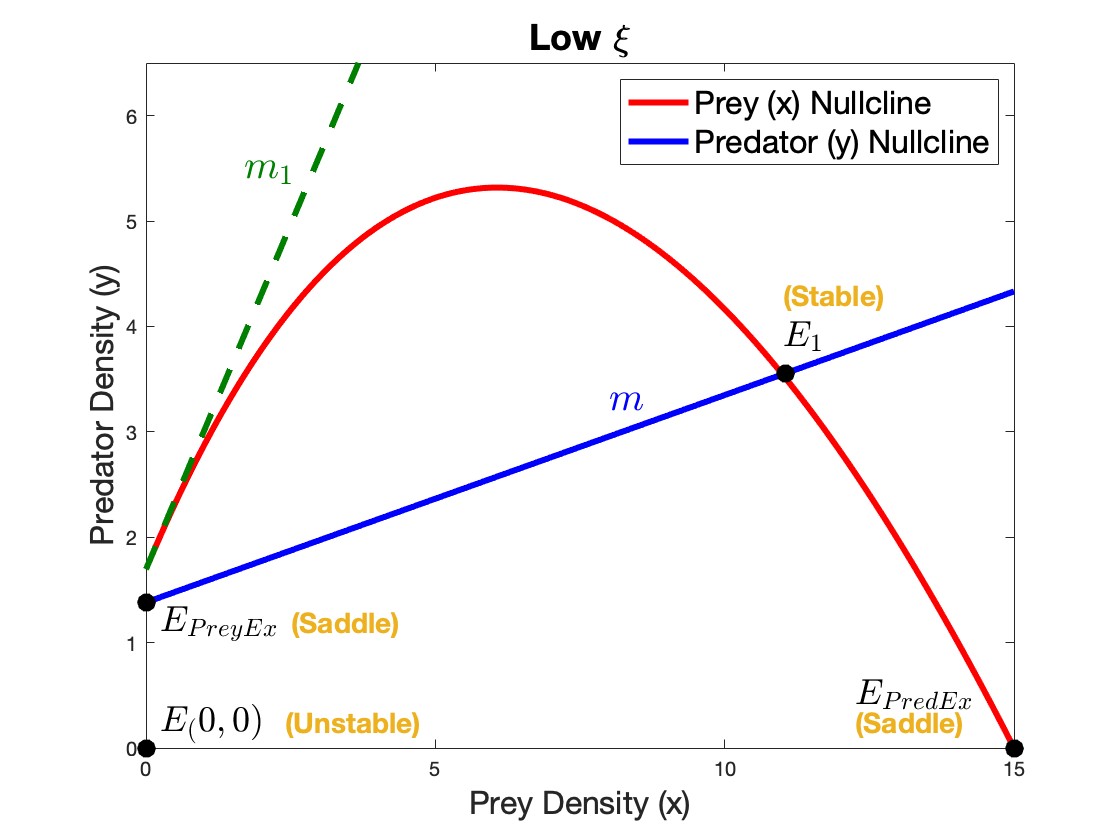}
\caption{} 
\label{cha2Fig1a}
\end{subfigure}%
\begin{subfigure}{7cm}
\centering\includegraphics[width=7.7cm]{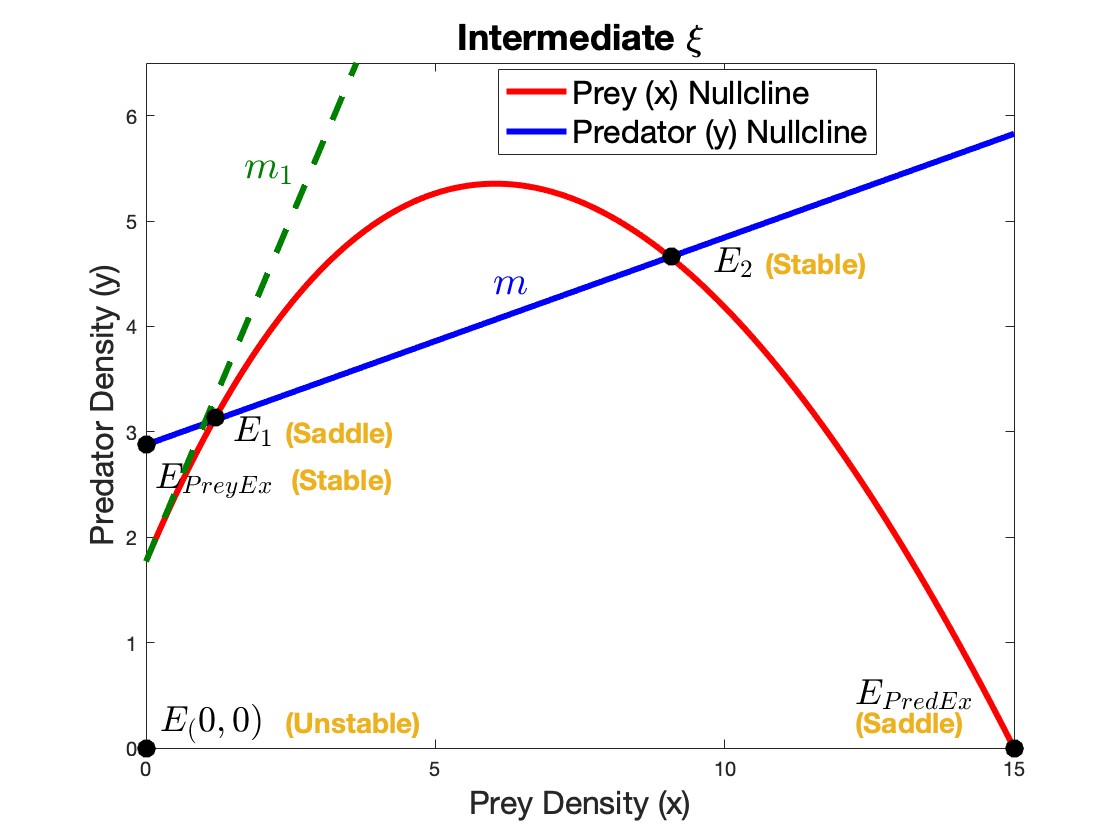}
\caption{}
\label{cha2Fig1b}
\end{subfigure}\vspace{10pt}
 
\begin{subfigure}{7cm}
\centering\includegraphics[width=7.5cm]{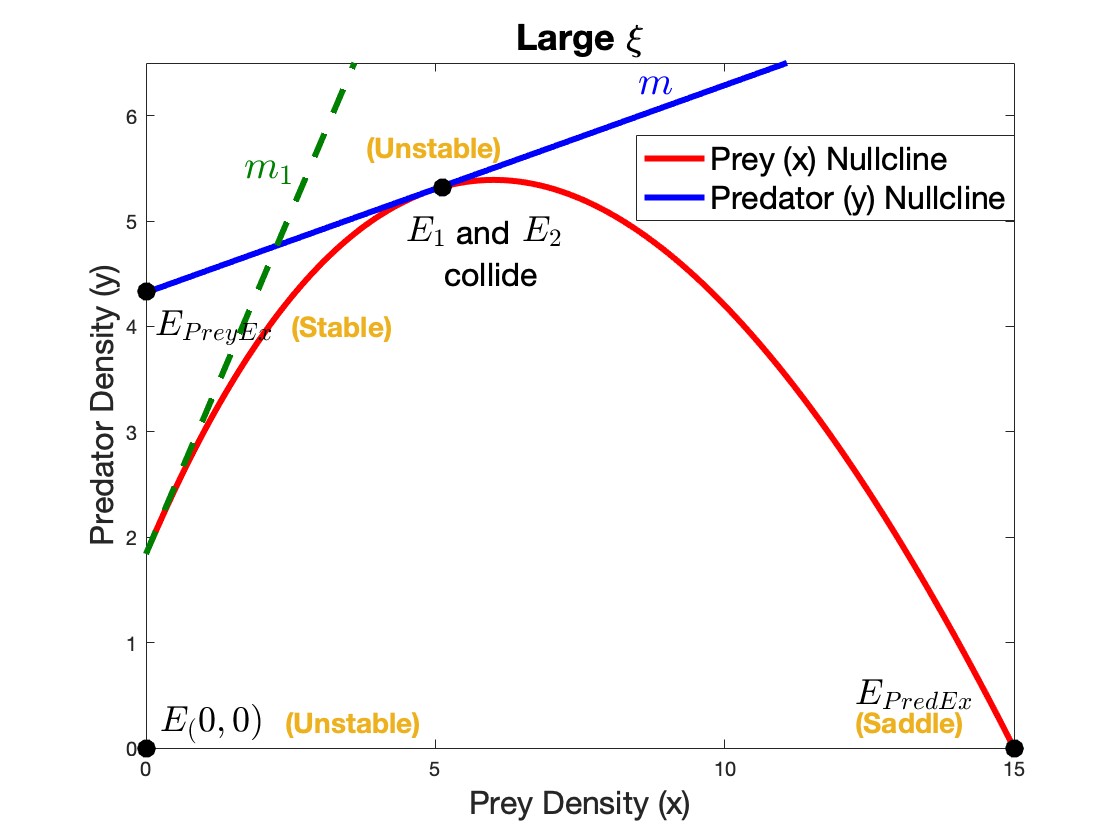}

\caption{}
\label{cha2Fig1c}
\end{subfigure}%
\begin{subfigure}{7cm}
\centering\includegraphics[width=7.7cm]{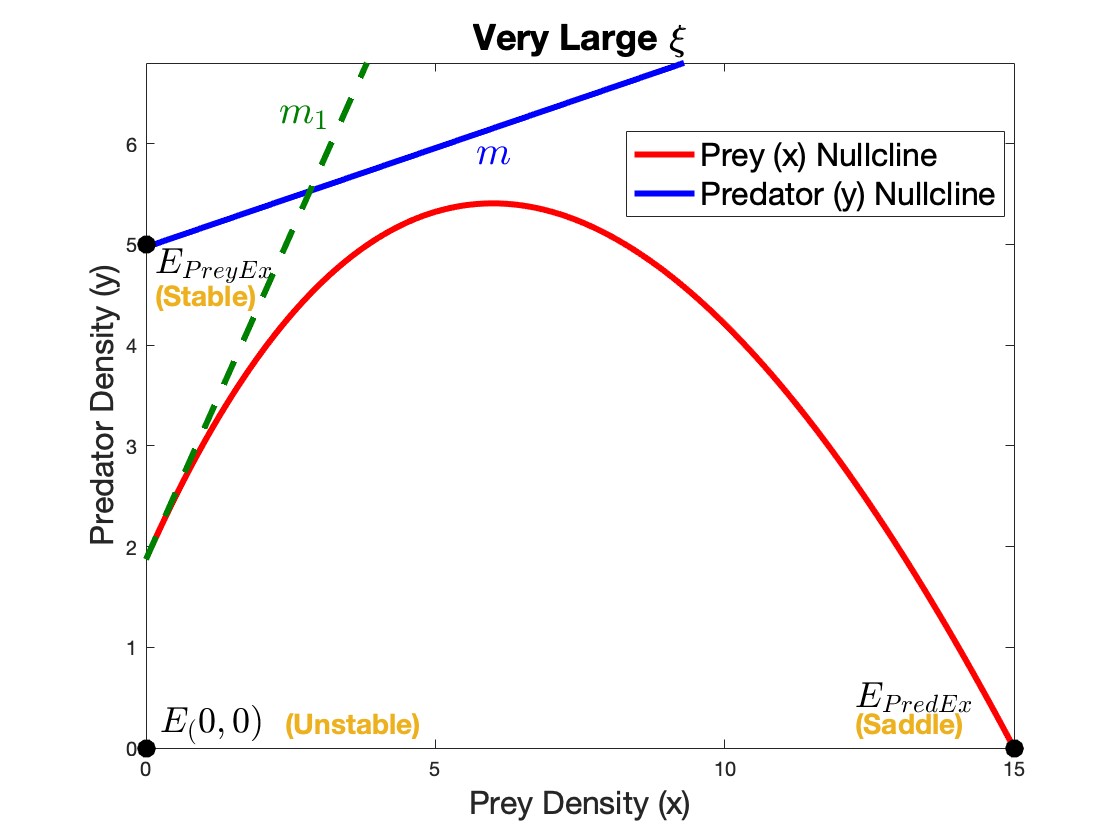}
\caption{}
\label{cha2Fig1d}
\end{subfigure}
\caption{The graphical representation of predator and prey nullclines, along with equilibrium states, is illustrated for different values of $\xi$: ((a)$\xi$=1.6, (b) $\xi$=2, (c)$\xi=2.4828$ (d)$\xi=3$) values by fixing all other ecological parameters, $\alpha=0.1, \beta=0.319, k=15, \epsilon=0.322,$ and $\delta=0.3$. Notably, when $m_1 > m$, the system exhibits two interior equilibrium points for intermediate levels of additional food quantity $(\xi).$ }
\label{figure3_1}
\end{figure}


\begin{figure}\label{figure3_2}
\centering
\begin{subfigure}{7cm}

\centering\includegraphics[width=7.7cm]{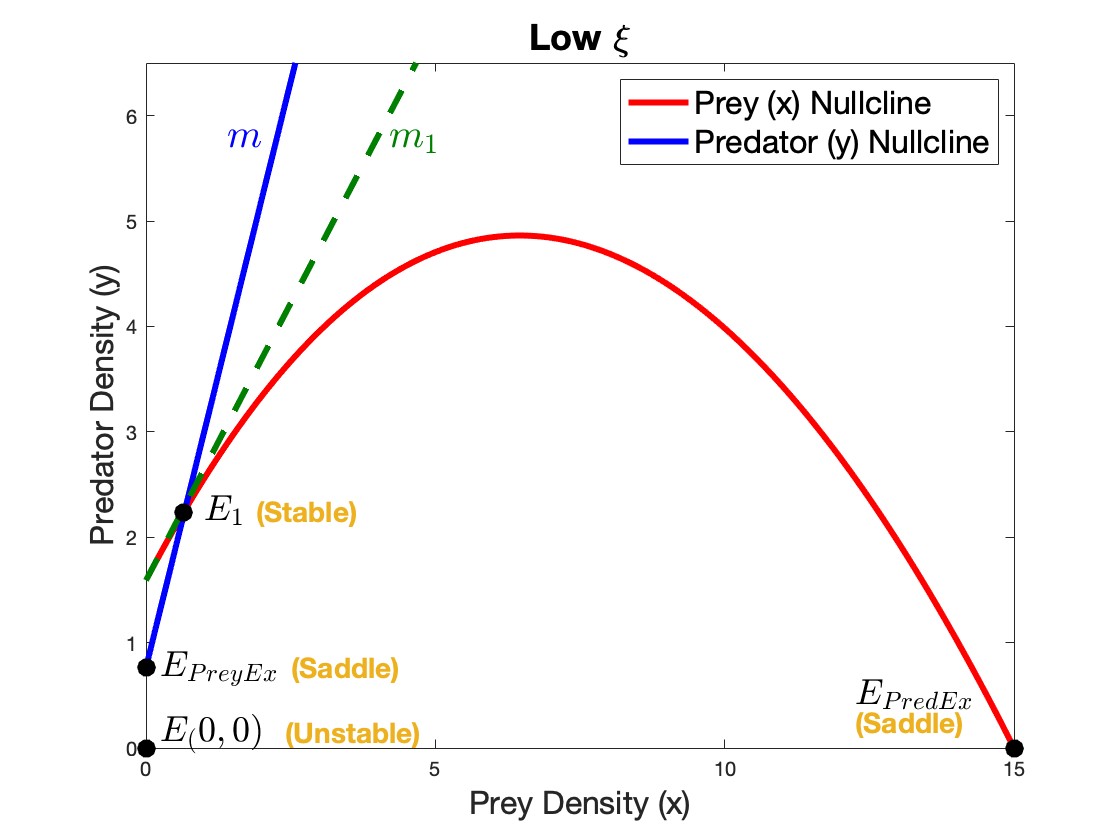}
\caption{} 
\label{cha2Fig2a1}
\end{subfigure}%
\begin{subfigure}{7cm}
\centering\includegraphics[width=7.7cm]{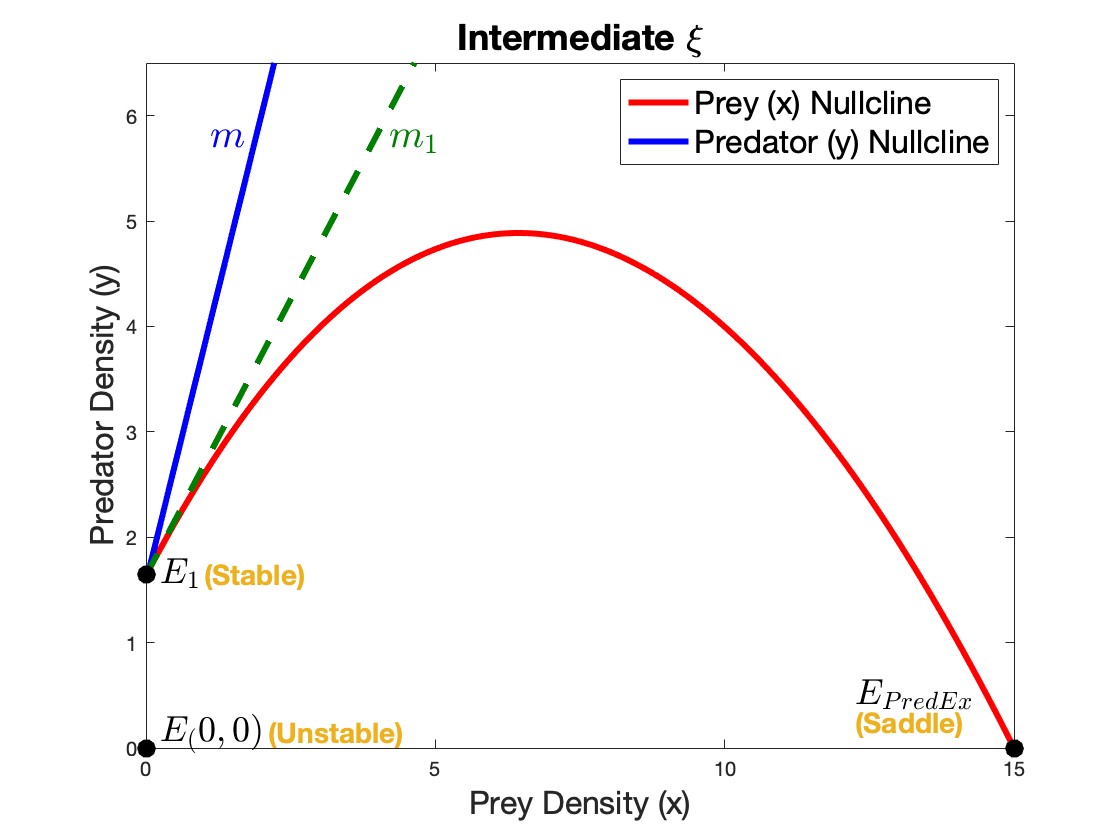}

\caption{}
\label{cha2Fig2b1}
\end{subfigure}\vspace{10pt}
 
\begin{subfigure}{7cm}
\centering\includegraphics[width=7.5cm]{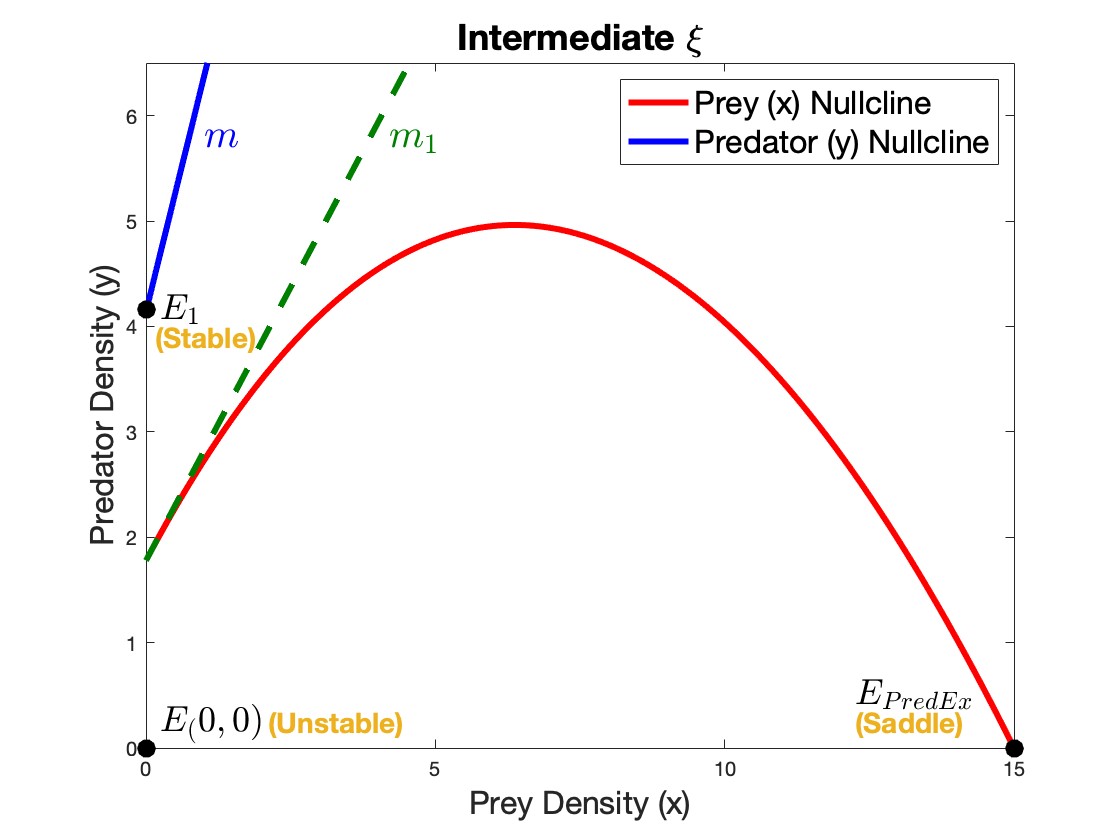}
\label{cha2Fig2c1}
\end{subfigure}\vspace{10pt}
\caption{Illustrates the graphical representation of predator and prey nullclines and the equilibrium states for various  $\xi$: (a) $\xi=0.98$, (b) $\xi=1.1$, and (c) $\xi=1.22$. All other ecological parameters are kept constant: $\alpha=0.32$, $\beta=0.6$, $k=15$, $\epsilon=0.15$, and $\delta=0.45$. In this context, we maintain the parameter constraint $m_1 < m$, implying the system can possess at most only one interior equilibrium point. }
\label{figure3_2}
\end{figure}

\subsection{Behaviour of nullclines of the system}\label{model2NullAna}
Studying the behavior of nullclines that depends on the parameter space enables us to analyze the dynamics of the entire system.
 In this context, the nature of the nullclines and their behavior depends on the parameter values. The additional food is characterized by its quality $(\alpha)$ and quantity $(\xi)$, which can significantly impact the efficacy of pest control. We specifically consider the additional food quantity $(\xi)$ as the parameter of interest to examine the system dynamics. In our analysis, we treat the parameters $k$, $\beta$, $\delta$, $\epsilon$, and $\alpha$ as fixed system parameters, while we vary the control parameter $\xi.$ We will now illustrate the parameter regimes of the different equilibrium states along with their stability concerning the control parameter $\xi.$
It's worth noting that the non-trivial predator nullcline \eqref{eqn:chap3_19} has a slope of $\frac{(\beta - \delta)}{\delta \epsilon}$ and a $y$-intercept of $\frac{(\beta - \delta \alpha)\xi - \delta}{\delta \epsilon}.$ As the control parameter $(\xi)$ is increased gradually, the non-trivial predator nullcline rises. This leads to multiple transitions in terms of the number of interior equilibrium points and their stability. In Figure \eqref{cha2Fig1a}, the dynamics are shown for a lower level of additional food amount $\xi.$ In this scenario, the trivial predator nullcline intersects the trivial prey nullcline at one point, indicating the existence of one interior equilibrium point. As the additional food quantity increases (Figure \ref{cha2Fig1b}), the predator nullcline moves upward, causing it to intersect the prey nullcline at two points, corresponding to the existence of two interior equilibrium points. Continuing to increase the additional food quantity, the two interior equilibrium points move closer and eventually collide with each other, as illustrated in Figure \eqref{cha2Fig1c}. At a significantly high level of additional food quantity, the predator nullcline separates from the prey nullcline, resulting in the absence of interior equilibrium points, as depicted in Figure \eqref{cha2Fig1d}.

\begin{figure}
  \centering
  \includegraphics[width=.45\linewidth]{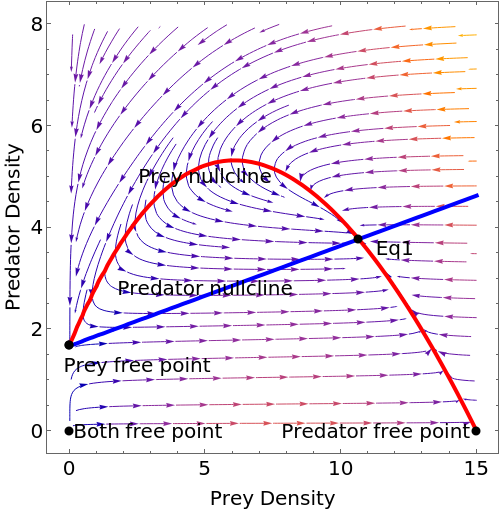} 
\caption{Illustrating the system dynamics across the parameter range of $0 < \xi < 1.65.$ In this scenario, a single stable coexistence state exists. All other ecosystem parameters remain constant: $\beta=0.319, \delta=0.3, k=15, \epsilon=0.322, \alpha=0.1.$}
\label{Bif1}
\end{figure}

\begin{figure}\label{figure4}
\centering
\begin{subfigure}{6cm}
\centering\includegraphics[width=5.8cm]{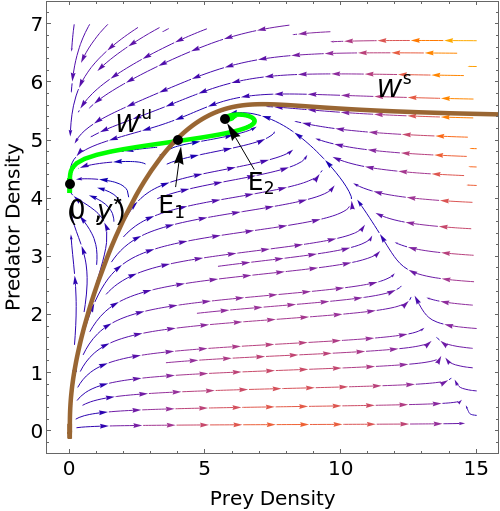}
\caption{} 
\label{figure4_2}
\end{subfigure}
\begin{subfigure}{6cm}
\centering\includegraphics[width=5.8cm]{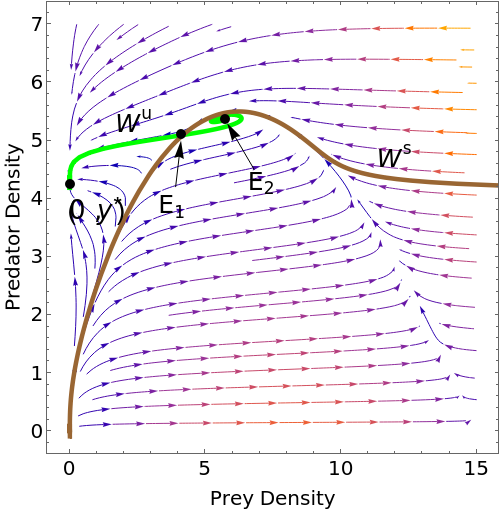}
\caption{}
\label{figure4_3}
\end{subfigure}
\begin{subfigure}{6cm}
\centering\includegraphics[width=5.8cm]{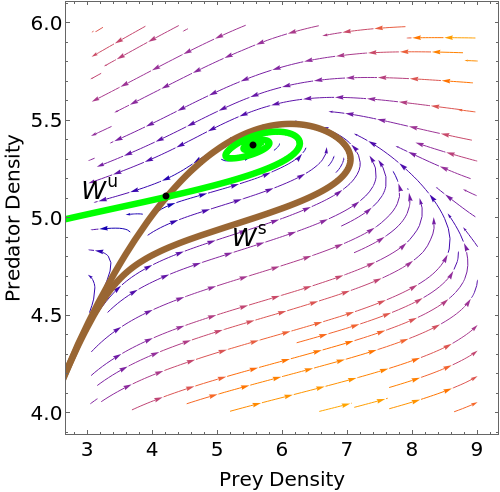}
\caption{}
\label{figure4_4}
\end{subfigure}
\caption{Illustrates the system dynamics within the parameter range of  $1.65<\xi<2.47$. In this scenario, the stable manifold $(W^s)$ and the unstable manifold $(W^u)$ of the interior equilibrium point $E_1$ are in brown and green colors, respectively. Two interior equilibrium points exist, and notably, the system experience bi-stability within this range. Panel (a) $(\xi= 1.68)$ presents the dynamical behavior with predator-prey nullclines and equilibrium, while the panel (b) focuses on the trajectory directions for $\xi=1.92$, and the panel (c) displays the dynamics corresponding to the the parameter value $\xi= 2.469.$ . In all subsequent figures the other fixed ecosystem parameters are $\beta=0.319, \delta=0.3, k=15, \epsilon=0.322,$ and $ \alpha=0.1.$  }
\end{figure}

\section{Numerical Simulations}\label{NumericalSi2}

In this section, we present numerical simulations that validate the results obtained from our analytical guidelines. Specifically, we conduct simulations and generate plots using tools such as Pplane8 (MATLAB continuation package), MATLAB® R2019b, and Mathematica. Our simulations focus on various dynamics, including Hopf, Homoclinic, and Saddle-node bifurcations. Detailed theoretical analyses are provided in the subsequent section (\ref{biAna}). To simulate the system, we vary the key parameter quantity of additional food $(\xi)$ across different ranges, while keeping the ecosystem parameters fixed ($\beta=0.319$, $\delta=0.3$, $k=15$, $\epsilon=0.322$, $\alpha=0.1$). This approach allows us to observe a range of interesting and complex dynamics for different parameter values of $\xi.$ We divide the parameter ranges into distinct cases, each exhibiting unique dynamics and transformations.

\begin{figure}\label{figure5}
\centering
\begin{subfigure}{6cm}
\centering\includegraphics[width=5.8cm]{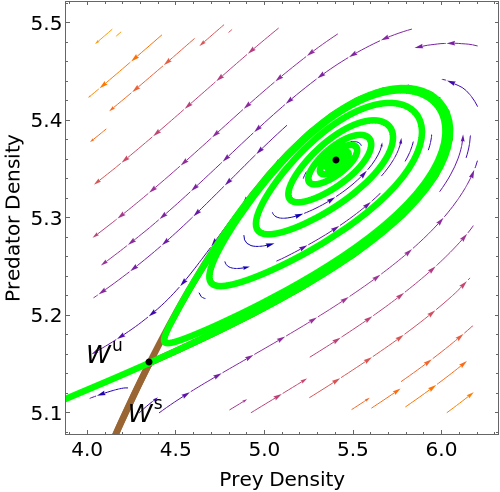}
\caption{} 
\label{figure5_1}
\end{subfigure}%
\begin{subfigure}{6cm}
\centering\includegraphics[width=5.8cm]{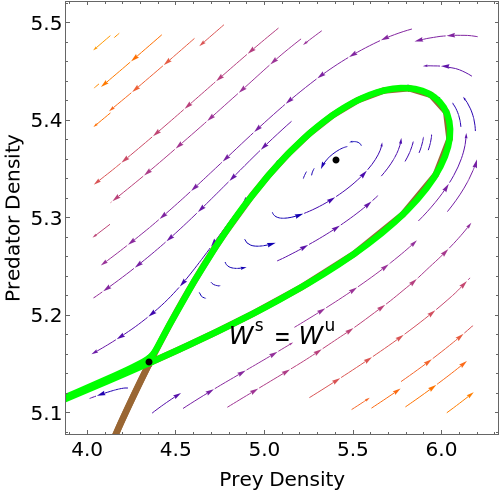}
\caption{}
\label{figure5_2}
\end{subfigure}\vspace{10pt}
 
\begin{subfigure}{6cm}
\centering\includegraphics[width=5.8cm]{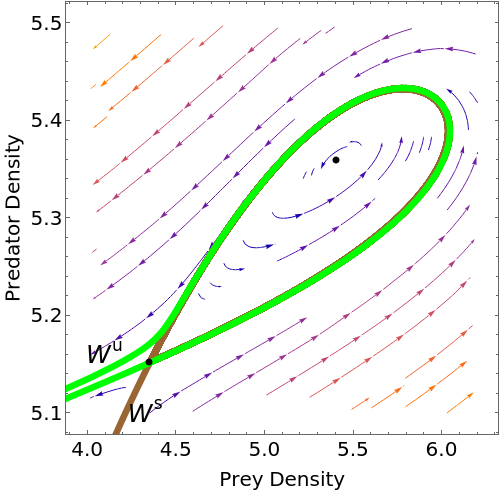}
\caption{}
\label{figure5_3}
\end{subfigure}
\begin{subfigure}{6cm}
\centering\includegraphics[width=5.8cm]{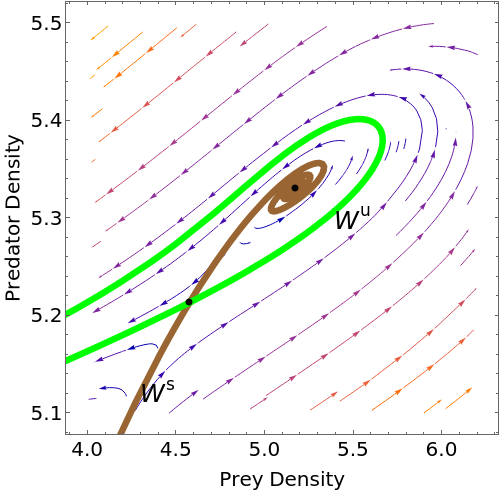}
\caption{}
\label{figure5_4}
\end{subfigure}
\caption{Here we explore the impact of varying levels of additional food  on inducing a Homoclinic bifurcation. The transitions of the stable manifold $W^{s}(E_1)$ and the unstable manifold $W^{u}(E_1)$ of the saddle point $E_1$ are illustrat for different additional food quantities: (a) ($\xi=2.4741312.$) , (b) ($\xi=2.4741313.$) , (c)($\xi=2.4741314.$)  and (d)($\xi=2.475$). A Homoclinic loop forms when  $W^{s}=W^{u}$ at $\xi=2.4741313.$ as in Figure (b). The ecosystem parameters remain constant: $\beta=0.319, \delta=0.3, k=15, \epsilon=0.322, \alpha=0.1.$}
\end{figure}

\begin{figure}
\label{Bif4}
\centering
\begin{subfigure}{6cm}
\centering\includegraphics[width=5.8cm]{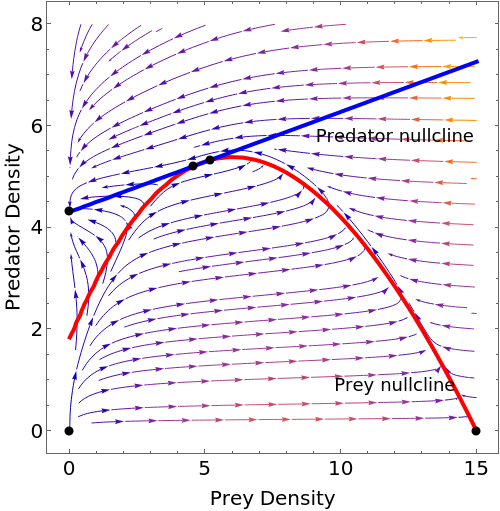}
\caption{} 
\label{Bif4_1}
\end{subfigure}%
\begin{subfigure}{6cm}
\centering\includegraphics[width=5.8cm]{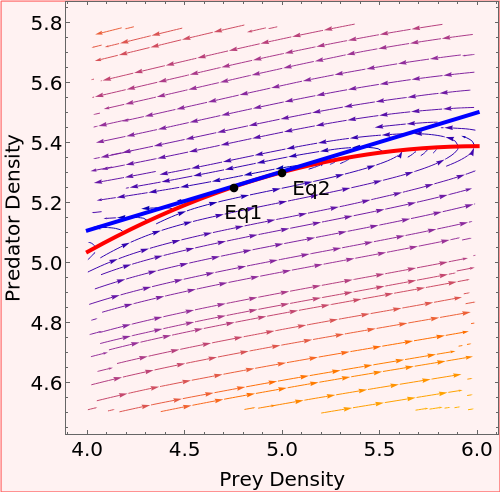}
\caption{}
 \label{Bif4_2}
\end{subfigure}\vspace{10pt}
\begin{subfigure}{6cm}
\centering\includegraphics[width=5.8cm]{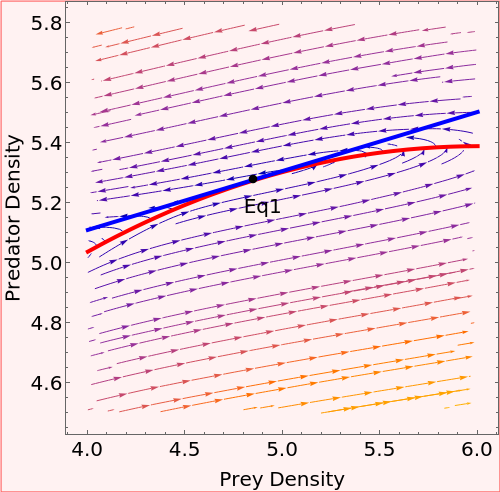}
\caption{}
 \label{Bif4_3}
\end{subfigure}
\begin{subfigure}{6cm}
\centering\includegraphics[width=5.8cm]{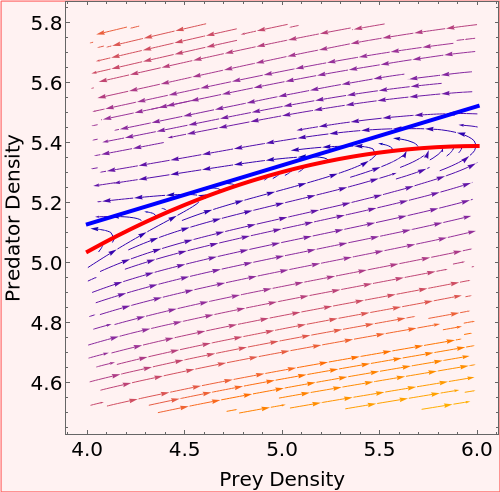}
\caption{}
 \label{Bif4_4}
\end{subfigure}
\caption{Numerical illustration depict the behavior of coexistence states in the presence of additional food with various supply levels. Figures (a) and (b) display the dynamic corresponding to the parameter value $\xi=2.4827$. In Figure (b), the dynamics near the two interior equilibrium points of Figure (a) are magnified. Figures (b), (c), and (d) clearly demonstrate the transitions of several interior equilibrium points. Figure (c) presents the collision of equilibrium points at $\xi=2.4828$, and Figure (d) corresponds to the departure of the predator nullcline from the prey nullcline at $\xi=2.4829$ leading to the absence of any coexistence states. The ecosystem parameters remain constant: $\beta=0.319, \delta=0.3, k=15, \epsilon=0.322, \alpha=0.1$.}
\label{Bif4}
\end{figure}

\textbf{Case 1:}
In the range of $0 < \xi < 1.65$, the system exhibits only one interior equilibrium point, which is locally stable, as shown in Figure \ref{Bif1}. All other trivial and axial equilibrium points, including the prey-free equilibrium $(0, y^*)$, are unstable. From a biological perspective, if the predator is provided with a low level of additional food, both predator and prey populations are able to sustain. Furthermore, the predator density in the ecosystem exceeds the prey. \\
\textbf{Case 2:}
Within the interval $1.65 < \xi < 2.47$, two interior equilibrium points are observed, resulting from the non-trivial predator and prey nullclines intersecting twice. The larger interior equilibrium point ($E_2$) is stable, while the other interior point ($E_1$) is a saddle-node. The main observation in this parameter regime is the existence of two stability regimes: the prey-free region and the stable second interior point ($E_2$) region. In this context, the behavior of the stable manifold ($W^{s}(E_1)$) acts as a separatrix, dividing the entire space into these two stable regimes. As seen in Figures \ref{figure4_2} and \ref{figure4_3}, trajectories starting below the stable manifold $(W^{s}(E_1))$ move towards the prey-free state $(0, y^)$, whereas those starting above it tend to converge towards the second interior equilibrium point $E_2(x_2^{}, y_2^{*})$. Consequently, the system exhibits bi-stability (multi-stability) due to the presence of two stable regions. From an ecological standpoint, this scenario, which heavily depends on the initial predator and prey densities in the ecosystem, can be interpreted as either the survival of both species or the complete elimination of the prey species. Remark that as $\xi$ is slightly elevated, the stability region of the prey-free equilibrium broadens.\\
\textbf{Case 3:}
While the system dynamics remains  bi-stable, a sequence of unstable limit cycles centered around the interior equilibrium point $E_2(x_2^{*},y_2^{*})=(8.02768 ,5.05513 )$ emerges at $\xi=2.2.$ 
The computed eigenvalues and eigen vectors of $E_2$ are\\
$-0.118487+0.011339i$, $-0.118487-0.011339i$ and $\left[0.993038 ,-0.116419-0.0179396i\right]$ and $\left[0.993038 ,\\
-0.116419-0.0179396i\right]$ respectively. The system clearly undergoes a Hopf bifurcation at this parametric value.\\
\textbf{Case 4:}
At the value $\xi= 2.469 $, the stable manifold of $E_1$ starts to twist around the interior equilibrium points: $E_1(x_1^{*},y_1^{*})=(4.12491,5.08031)$ and $E_2(x_2^{*},y_2^{*})=(5.6355, 5.37743 )$, as depicted in Figure \ref{figure4_4}. Consequently, the stability region of the prey free point expands, leading to the inevitability of prey extinction for a majority of initial prey and predator values. In other words, the prey extinction point tends to attract trajectories from most of the feasible initial points within the system.\\
\textbf{Case 5:}
In this case, we concentrate on the possibility of homoclinic bifurcation. Figure \ref{figure5_1} illustrates the dynamics just before the occurrence of homoclinic bifurcation, where the unstable manifold $W^{u}(E_1)$  lies entirely inside the stable manifold $W^{s}(E_1)$ orbit. As the parameter value is steadily increased, the limit cycle of $E_2=(5.40245,5.35891)$ collides with the interior saddle $E_1=(4.34883,5.15167)$ at the parameter value $\xi=2.4741313 $ depicted in Figure \ref{figure5_2}. At this point, the stable manifold $W^{s}(E_1)$ and the unstable manifold $W^{u}(E_1)$ collide with the limit cycle simultaneously. As consequence, the system undergoes a homoclinic bifurcation at $\xi=2.4741313$. Figure \ref{figure5_3} illustrates the system dynamics just after the unstable manifold $W^{u}(E_1)$ leaves the homoclinic orbit. For relatively large parameter value $(\xi=2.475)$, the system reacts by shrinking the stable manifold towards the second interior equilibrium point $E_2$  as shown in Figure \ref{figure5_4}. \\
\textbf{Case 6:}
At the parameter value $\xi=2.478$, the eigen values and eigen vectors of $E_2(x_2^{*},y_2^{*})=(5.26541,5.34353)$ are found to be $0.000645064+0.0471803i, 0.000645064-0.0471803i,$ and $\left[0.988031,0.123707-0.0921525i\right],$ 
$\left[0.988031,0.123707+0.0921525i\right]$ respectively. In this situation, nearly all the trajectories converge toward the prey-free equilibrium state. With the continuous increase of $\xi,$ the two interior equilibrium move closer to each other and eventually, collide at the $\xi$ value 2.4828. At this point, the system experience the saddle-node bifurcation as shown in Figure \ref{Bif4}. Specifically, Figure \ref{Bif4_2} displays the dynamic around the two interior points just before the collision and Figure \ref{Bif4_3} depicts the behavior of the system after the collision. Furthermore, Figure \ref{Bif4_4}, demonstrates the predator nullcline departing from the prey nullcline at the parameter value 2.4829. In this scenario, the system no longer has any interior equilibrium state. For parameter values that exceed this threshold, the prey-free equilibrium solution tends to be globally asymptotically stable based on our numerical simulations. Interestingly, when the predator is provided with a significantly larger additional food quantity ($\xi>2.478$), the prey species is driven to complete extinction.
\section{Local Bifurcation Analysis}\label{biAna}
We employ bifurcation theory to examine changes in the qualitative behavior of the equilibrium solutions as the control parameter $\xi$ is altered while keeping all other parameters constant. The system exhibits variety of bifurcation phenomena, including Hopf bifurcation, Saddle-node bifurcation and Homoclinic bifurcation. When the system possesses two interior equilibrium points, the interior point $E_1$ is always a saddle and the behaviour of its manifolds ( $(W^{s}(E_1))$ and $(W^{u}(E_1))$) significantly influences the system dynamics.
\subsection{Hopf bifurcation}
We utilize the Hopf bifurcation theorem \ref{thmchap3_1} to construct the theorem \ref{thmchap3_2}, which proposes that the system \eqref{model1} undergoes Hopf bifurcation under certain parametric constraints.

\begin{theorem}\label{thmchap3_1}{Hopf Bifurcation Theorem}\cite{S26}\\
If $b\left(\xi\right)$ and $c\left(\xi\right)$ are the smooth functions of $\xi$ in an open interval about $\xi^{*}\in \mathbb{R}.$ such that characteristic equation has a pair of imaginary eigenvalues $\lambda_1=p\left(\xi\right)+iq\left(\xi\right)$ and $\lambda_2=p\left(\xi\right)-iq\left(\xi\right)$ with $p,q\in \mathbb{R}.$ so that they become purely imaginary of  $\xi=\xi^{*}$ and $\frac{dp}{d\xi}|_{\xi=\xi^{*}}	\neq 0$, then a Hopf bifurcation occurs around the equilibrium point, $E=\left( x^{*},y^{*}\right)$ at $\xi=\xi^{*}.$
That is, stability of $E=\left(x^{*},y^{*}\right)$  changes accompanied by the certain of a limit cycle at $\xi=\xi^{*}.$
\end{theorem}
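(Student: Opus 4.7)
The plan is to establish this by combining a smooth coordinate reduction to the Poincaré normal form with the classical implicit function theorem argument for the emergence of small-amplitude periodic orbits. Smoothness of $b(\xi)$ and $c(\xi)$ together with the implicit function theorem first guarantees that the equilibrium $E=(x^{*},y^{*})$ persists as a smooth branch $E(\xi)$ for $\xi$ in a neighborhood of $\xi^{*}$. Translating coordinates so that $E(\xi)$ sits at the origin, the linearization has eigenvalues $\lambda_{1,2}(\xi)=p(\xi)\pm iq(\xi)$ with $p(\xi^{*})=0$ and $q(\xi^{*})\neq 0$ by hypothesis.

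Next I would carry out a $\xi$-dependent smooth linear change of variables that casts the Jacobian into the canonical block form with diagonal entries $p(\xi)$ and off-diagonal entries $\pm q(\xi)$. Writing $z=u+iv$, the system becomes $\dot z=(p(\xi)+iq(\xi))z+F(z,\bar z;\xi)$, where $F$ collects the nonlinear terms. Near-identity polynomial transformations then eliminate all non-resonant monomials through cubic order, producing the Poincaré normal form
\begin{equation*}
\dot z=(p(\xi)+iq(\xi))\,z+\ell(\xi)\,z\,|z|^{2}+O(|z|^{4}),
\end{equation*}
where $\ell(\xi)$ is the first Lyapunov coefficient, whose real part encodes the direction and stability of the bifurcating cycles.

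Passing to polar coordinates $z=r e^{i\theta}$, the radial equation decouples at leading order to $\dot r=p(\xi)\,r+\mathrm{Re}\,\ell(\xi)\,r^{3}+O(r^{5})$. The transversality condition $p'(\xi^{*})\neq 0$ guarantees that the eigenvalues cross the imaginary axis with nonzero speed; combined with the non-degeneracy $\mathrm{Re}\,\ell(\xi^{*})\neq 0$, the implicit function theorem applied to the truncated amplitude equation yields a unique smooth branch of small periodic orbits $r=r_{0}(\xi)$ emanating from $r=0$ at $\xi=\xi^{*}$. The side of $\xi^{*}$ on which these orbits appear, together with their asymptotic stability, is determined by the signs of $p'(\xi^{*})$ and $\mathrm{Re}\,\ell(\xi^{*})$.

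The main obstacle is showing that the periodic orbits predicted by the truncated normal form actually persist in the full (non-truncated) system. This requires either a careful Poincaré return-map argument that controls the effect of the $O(|z|^{4})$ remainder, or a Lyapunov--Schmidt reduction on the Banach space of $T$-periodic functions which packages the transversality hypothesis directly into an abstract implicit function theorem. Since the statement as quoted follows the standard cited framework of \cite{S26}, I would ultimately invoke that classical machinery to close the persistence step rather than reprove it from scratch, and then in the subsequent theorem specialize the hypotheses $p(\xi^{*})=0$ and $p'(\xi^{*})\neq 0$ to the explicit trace and determinant expressions \eqref{trace} and \eqref{det1} derived for system \eqref{model1}.
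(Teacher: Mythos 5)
The paper does not prove this statement at all: Theorem \ref{thmchap3_1} is quoted verbatim as a classical result from \cite{S26} and used as a black box in the proof of Theorem \ref{thmchap3_2}. So there is no in-paper argument to compare against, and any faithful sketch of the classical Hopf machinery is acceptable in spirit. Your outline (persistence of the equilibrium branch, reduction to Poincar\'e normal form, polar coordinates, transversality) is the standard textbook route and is broadly sound.

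There is, however, one genuine gap in the way you close the argument. You apply the implicit function theorem to the truncated amplitude equation $\dot r=p(\xi)r+\mathrm{Re}\,\ell(\xi)r^{3}+O(r^{5})$ to solve for a branch $r=r_{0}(\xi)$, which requires the non-degeneracy $\mathrm{Re}\,\ell(\xi^{*})\neq 0$ of the first Lyapunov coefficient. That hypothesis is not part of the theorem as stated; the statement assumes only simple purely imaginary eigenvalues at $\xi^{*}$ and the transversality condition $p'(\xi^{*})\neq 0$. The classical proof that works under exactly these hypotheses inverts the roles of the variables: one forms the Poincar\'e displacement map $V(a,\xi)$ on a transversal, factors out the trivial zero $V(a,\xi)=a\widetilde V(a,\xi)$, notes that $\widetilde V(0,\xi^{*})=0$ while $\partial_{\xi}\widetilde V(0,\xi^{*})$ is a nonzero multiple of $p'(\xi^{*})$, and solves $\widetilde V(a,\xi)=0$ for $\xi=\xi(a)$ as a function of the amplitude $a$. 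This yields the one-parameter family of small periodic orbits with no assumption on $\ell(\xi^{*})$; the sign of $\mathrm{Re}\,\ell(\xi^{*})$ is only needed afterwards to decide the side of the bifurcation and the stability of the cycles. If you intend your proof to cover the theorem exactly as stated, you should either replace the amplitude-equation step by this return-map (or Lyapunov--Schmidt) argument, or explicitly add $\mathrm{Re}\,\ell(\xi^{*})\neq 0$ as a hypothesis and note that you are proving a slightly stronger, non-degenerate version.
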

\begin{theorem}\label{thmchap3_2}
 The system undergoes a Hopf-bifurcation with respect to bifurcation parameter $\xi$ around the equilibrium point $E_2=\left(x_2^{*},y_2^{*}\right)$ if 
 \begin{enumerate}
     \item  $y_2^{*}\neq\frac{\beta(x_2^*+\xi)}{\alpha}\frac{2\alpha\delta(1+\alpha+x_2^*)-\beta(\alpha x_2^*+x_2^*+2\alpha+1)}{(\beta-2\delta)x_2^*+\beta\xi}$
     \item $\xi< 1+\alpha\xi+\epsilon+ y_2^{*}$
 \end{enumerate}
\end{theorem}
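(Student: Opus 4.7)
The plan is to apply the Hopf Bifurcation Theorem (Theorem \ref{thmchap3_1}) to the Jacobian at $E_2$, with $\xi$ serving as the bifurcation parameter. First I would simplify the trace and determinant at $E_2$ starting from \eqref{trace2} and \eqref{det1}, using the nullcline identities \eqref{proof3.1_3} and \eqref{proof3.1_3_1} --- which hold because $(x_2^*, y_2^*)$ lies on the non-trivial predator nullcline \eqref{eqn:chap3_19} --- to clear the $R^2$ and $R^4$ denominators and obtain polynomial-level expressions in $(x_2^*, y_2^*, \xi)$ that are amenable to direct manipulation.

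Next I would locate the candidate bifurcation value $\xi^*$ by solving $\mathrm{Tr}(J_{E_2})(\xi) = 0$. By Theorem \ref{secondEq}(i), the trace is negative precisely when $y_2^* > \frac{\beta(x_2^*+\xi)\left[\beta((1-\delta)k - 2x_2^*)(x_2^*+\xi) + k\delta^2(1+\alpha\xi+x_2^*)\right]}{\delta k[(\beta-\delta)x_2^* + \beta\xi]}$, so $\xi^*$ is exactly the value at which equality holds in this bound. At this same $\xi^*$ one must verify $\det(J_{E_2})(\xi^*) > 0$, so that the eigenvalues are genuinely purely imaginary, $\lambda_{1,2}(\xi^*) = \pm i\sqrt{\det}$. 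Condition (2) of the theorem enters here to pin down the sign of the factor $(1+\alpha\xi + \epsilon y_2^* - \xi)$ appearing in $J_{21}$, so that the off-diagonal product $J_{12}J_{21}$ contributes with the sign needed, in combination with the determinant bound from Theorem \ref{secondEq}(i), to guarantee $\det > 0$.

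For the transversality condition, set $p(\xi) = \tfrac{1}{2}\mathrm{Tr}(J_{E_2})(\xi)$ and compute $\frac{dp}{d\xi}(\xi^*)$. Because $x_2^*$ and $y_2^*$ depend implicitly on $\xi$ through the quadratic \eqref{eqn:chap3_22} and the line \eqref{eqn:chap3_19}, this step requires implicit differentiation: differentiate \eqref{eqn:chap3_22} to extract $\frac{dx_2^*}{d\xi}$, then differentiate \eqref{eqn:chap3_19} to get $\frac{dy_2^*}{d\xi}$, and apply the chain rule to the simplified trace. The resulting closed-form expression for $\frac{dp}{d\xi}(\xi^*)$ vanishes precisely when $y_2^*$ equals the algebraic quantity on the right-hand side of condition (1); since condition (1) posits strict inequality, this degenerate case is ruled out and $\frac{dp}{d\xi}(\xi^*) \neq 0$. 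All three hypotheses of Theorem \ref{thmchap3_1} are then verified, and the Hopf bifurcation at $E_2$ follows.

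The main obstacle will be the transversality step. The reduction of trace and determinant via \eqref{proof3.1_3}--\eqref{proof3.1_3_1} is mechanical, and the sign bookkeeping for $\det > 0$ piggybacks on Theorem \ref{secondEq}; however, differentiating the simplified trace with respect to $\xi$ forces careful tracking of the implicit dependence of $(x_2^*, y_2^*)$ on $\xi$, and algebraically matching the outcome against the quantity in condition (1) is where a bookkeeping error is most likely. One must also check that the candidate $\xi^*$ lies in the window $\frac{\delta}{\beta - \delta\alpha - \beta\epsilon} < \xi^* < \frac{[\delta-\beta(1-\epsilon)]k}{\beta\epsilon}$ highlighted in Figure \ref{figuretwoInt2}, so that the two-interior-equilibrium regime in which $E_2$ is defined persists across the bifurcation.
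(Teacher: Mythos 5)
Your overall strategy (apply Theorem \ref{thmchap3_1}, locate $\xi^*$ where the trace vanishes, confirm purely imaginary eigenvalues, then check transversality) is the same as the paper's, and your use of condition (2) to control the sign of the factor $(1+\alpha\xi+\epsilon y_2^*-\xi)$ in $J_{21}$ matches how the paper uses it (there, to force $J_{12}J_{21}<0$ so that the denominator $a^2+b^2=(J_{11}-J_{22})^2+4J_{12}J_{21}$ of $\dot p(\xi^*)$ is nonzero). The genuine divergence, and the gap, is in the transversality step. The paper computes $\dot T(\xi)$ as the \emph{partial} derivative $\frac{\partial (J_{11}+J_{22})}{\partial \xi}$, holding $(x_2^*,y_2^*)$ fixed, and condition (1) of the theorem is exactly the statement that this partial derivative is nonzero. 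You instead propose the full total derivative, differentiating \eqref{eqn:chap3_22} and \eqref{eqn:chap3_19} implicitly to obtain $\frac{dx_2^*}{d\xi}$ and $\frac{dy_2^*}{d\xi}$ and feeding them through the chain rule. Those chain-rule contributions $\frac{\partial T}{\partial x_2^*}\frac{dx_2^*}{d\xi}+\frac{\partial T}{\partial y_2^*}\frac{dy_2^*}{d\xi}$ do not vanish in general, so the vanishing locus of your total derivative is a \emph{different} algebraic condition on $(x_2^*,y_2^*,\xi)$ than the right-hand side of condition (1). Your final claim --- that your expression for $\frac{dp}{d\xi}(\xi^*)$ vanishes precisely when equality holds in condition (1) --- is therefore unsupported; carried out honestly, your route proves a variant of the theorem with a different non-degeneracy hypothesis, not the theorem as stated.

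To be fair, your instinct is the more faithful reading of the Hopf transversality requirement, since the equilibrium $E_2$ itself moves with $\xi$; the paper's proof quietly restricts to the explicit $\xi$-dependence of the Jacobian entries. But as a proof of \emph{this} statement, you must either (a) adopt the paper's convention and show that condition (1) is the non-vanishing condition for $\frac{\partial(J_{11}+J_{22})}{\partial\xi}$ alone, or (b) keep the total derivative and replace condition (1) by the correct expression, which requires actually carrying out the implicit differentiation you only sketch. One smaller point: the paper also reduces $\dot p(\xi^*)$ to the closed form $-\frac{1}{2}\left[(J_{11}-J_{22})^2+4J_{12}J_{21}\right]\dot T(\xi^*)$ divided by $(J_{11}-J_{22})^2+4J_{12}J_{21}$, so that transversality collapses to the two conditions $(J_{11}-J_{22})^2+4J_{12}J_{21}\neq 0$ and $\dot T\neq 0$; your plan to separately establish $\det>0$ via Theorem \ref{secondEq}(i) is a reasonable substitute for the first of these, but it is not how the paper organizes the argument.
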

The proof of the theorem is attached in the section \ref{ProveHoph} in Appendix.
\vspace{3mm}
\subsection{Saddle node bifurcation} \label{saddleNode}
The system ~\eqref{model1} exibits Saddle-node bifurcation when the two interior equilibrium points, $E_1=\left(x_1^{*},y_1^{*}\right)$ and $E_2=\left(x_2^{*},y_2^{*}\right)$, collide with each other. We formulate Theorem \ref{thmchap3_11} to outline  the conditions that the system parameters must satisfy in order to system undergo saddle-node bifurcation. We begin by stating Sotomayor's Theorem \ref{thmchap3_10}, which is employed to derive Theorem \ref{thmchap3_11}."

\begin{theorem}\label{thmchap3_10}
Consider the following system
\begin{equation}
    x^.=F(x,\alpha)
\end{equation}
Where $\alpha \in \mathbb{R},$ a parameter with $\alpha_0$ being the bifurcation threshold. Suppose that $F=(x,\alpha)=0$ and the  matrix $A=DF(x_0;\alpha_0)$ has a simple eigen value $\lambda=0$ with eigen vector $U$ and that $A^T$ has an eigen vector $W$ corresponding to the eigen value $\lambda=0$. Furthermore, suppose that $A$ has $k$ eigen values with negative real part and $(n-k-1)$ eigen values with positive real part and that the following conditions are satisfied.
\begin{equation}\label{thmchap3_33}
    W^{T} F_{\alpha}(x_0;\alpha_0)\neq 0,
\end{equation}

\begin{equation}\label{thmchap3_4}
     W^{T} \left[D^2F_{\alpha}(x_0;\alpha_0)(U;U)\right] \neq 0
\end{equation}

then the system experiences saddle-node bifurcation at the equilibrium point $x_0$ as the parameter $\alpha$ passes through the bifurcation value $\alpha=\alpha_0$ 
\end{theorem}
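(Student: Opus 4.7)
The plan is to reduce the full $n$-dimensional system to a scalar ODE on a center manifold and then identify that scalar equation, via its low-order Taylor coefficients at $(x_0,\alpha_0)$, as the classical saddle-node normal form $\dot u = \mu \pm u^2$.

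First I would pass to the extended system $(\dot x, \dot \alpha) = (F(x,\alpha), 0)$. At $(x_0,\alpha_0)$ its Jacobian has two zero eigenvalues, namely the simple zero of $A = DF(x_0;\alpha_0)$ together with the trivial zero from $\dot\alpha = 0$, and the remaining $n-1$ eigenvalues are hyperbolic with no spectrum on the imaginary axis. The Center Manifold Theorem then supplies a smooth two-dimensional locally invariant manifold through $(x_0,\alpha_0)$ containing all nearby recurrent dynamics. I would parametrize it by $(u,\alpha)$ using the left eigenvector: define the scalar coordinate by $u = W^{T}(x - x_0)$, normalized so that $W^{T} U = 1$. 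The reduced dynamics then take the form $\dot u = h(u,\alpha)$, $\dot\alpha = 0$.

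Next I would Taylor-expand $h$ about $(0,\alpha_0)$. The constant term vanishes since $x_0$ is an equilibrium, and the linear-in-$u$ term vanishes since $A$ restricted to the center direction is zero. Projecting the original ODE onto $W$ and using $W^{T} A = 0$ identifies
\begin{equation*}
\dot u = a\,(\alpha - \alpha_0) + \tfrac{1}{2}\,b\,u^{2} + O\!\left((\alpha-\alpha_0)^2,\,(\alpha-\alpha_0)u,\,u^{3}\right),
\end{equation*}
with $a = W^{T} F_\alpha(x_0;\alpha_0)$ and $b = W^{T}\!\left[D^{2} F(x_0;\alpha_0)(U,U)\right]$. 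Hypotheses (\ref{thmchap3_33}) and (\ref{thmchap3_4}) are exactly $a \neq 0$ and $b \neq 0$, reading the $F_\alpha$ in (\ref{thmchap3_4}) as a typo for $F$ consistent with the standard Sotomayor formulation. Applying the Implicit Function Theorem to the reduced equilibrium equation $h(u,\alpha) = 0$ then produces two hyperbolic branches of scalar equilibria meeting tangentially at $\alpha = \alpha_0$ on one side of the bifurcation value and no nearby equilibria on the other side, which is precisely the saddle-node normal form.

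Finally I would lift the conclusion back to the $n$-dimensional system via the center manifold, adjoining the $k$ stable and $n-k-1$ unstable hyperbolic directions to each branch to recover the full dimensions of the stable and unstable manifolds of the two colliding equilibria near $x_0$. The main obstacle, which is more bookkeeping than conceptual difficulty, is showing that the unknown quadratic correction to the graph of the center manifold does not corrupt the coefficient $b$: one must verify that its contribution to $h$ is only $O(u^{3})$, so that $b$ is cleanly given by the invariantly defined projection $W^{T}\!\left[D^{2} F(x_0;\alpha_0)(U,U)\right]$. This follows from $W^{T} A = 0$, which annihilates any graph-correction term after projection by $W^{T}$. Once this identification is secured, the reduction to the saddle-node normal form is routine and the theorem follows.
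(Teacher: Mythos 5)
This statement is Sotomayor's theorem, which the paper quotes as a known result and does not prove: it is stated only so that it can be applied to derive Theorem \ref{thmchap3_11}. There is therefore no in-paper proof to compare against, but your center-manifold argument is the standard textbook proof of this result (as in Perko or Guckenheimer--Holmes) and is correct in outline. The key points are all present: the suspended system $(\dot x,\dot\alpha)=(F,0)$ has a two-dimensional center manifold; the reduced scalar equation has vanishing constant and linear-in-$u$ terms; the coefficients of $(\alpha-\alpha_0)$ and $u^2$ are exactly $W^{T}F_\alpha$ and $W^{T}D^2F(U,U)$ because $W^{T}A=0$ kills the contribution of the quadratic graph correction; and the two nondegeneracy hypotheses then yield the fold of the equilibrium branch. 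You are also right that the second condition in the statement as printed, $W^{T}[D^2F_{\alpha}(x_0;\alpha_0)(U,U)]\neq 0$, is a typo for $W^{T}[D^2F(x_0;\alpha_0)(U,U)]\neq 0$; the paper's own application in the appendix (Section \ref{saddleNodeProof}) computes precisely $w^{T}[D^{2}g(\Tilde{E},\xi^{SN})(v,v)]$ with no $\xi$-derivative, confirming your reading. Two small points if you were to write this out in full: the Implicit Function Theorem should be applied to solve $h(u,\alpha)=0$ for $\alpha$ as a function of $u$ (it is $\partial_\alpha h=a\neq 0$ that is available, not $\partial_u h$), giving a single curve $\alpha=\alpha_0-\tfrac{b}{2a}u^2+O(u^3)$ whose slices at fixed $\alpha$ contain zero or two equilibria; and the hyperbolicity of the two bifurcating equilibria within the center manifold follows from $\partial_u h=bu+O(u^2)\neq 0$ on each branch, which is worth stating explicitly before adjoining the $k$ stable and $n-k-1$ unstable transverse directions.
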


\begin{theorem}\label{thmchap3_11}
The system ~\eqref{model1} undergoes a Saddle–node bifurcation around $\Tilde{E}=\left(\Tilde{x},\Tilde{y}\right)$ with respect to the bifurcation parameter $\xi=\Tilde{\xi}$, if the parameters and equilibrium points satisfy following conditions:

\begin{itemize}
   \item $\Tilde{y}\neq\frac{(\beta-\delta)\Tilde{x}+(\beta-\delta\alpha)\xi-\delta}{(1+\alpha \xi+\Tilde{x})(\beta-\delta\alpha)}$
    \item $k>2\Tilde{x}$ and $\epsilon<1$
   \item $\frac{(\beta-\delta)\Tilde{x}+\beta\xi}{(\beta-\delta)(\Tilde{x}+\xi)}<\Tilde{y}<\frac{\beta^2(k-2\Tilde{x})(\Tilde{x}+\xi)^2}{\delta k\left[(\beta-\delta)\Tilde{x}+\beta\xi\right]}$
     \item$\frac{\left(1+\alpha\right)\xi-1}{\left(2-\epsilon\right)\epsilon}<\Tilde{y}<\frac{\beta\epsilon(\Tilde{x}+\xi)(1-\delta\epsilon)-\delta\Tilde{x}}{\delta\epsilon}$
\end{itemize}
\end{theorem}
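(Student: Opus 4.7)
\vspace{2mm}

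\noindent\textbf{Proof plan.} The plan is to apply Sotomayor's Theorem \ref{thmchap3_10} directly with state $(x,y)$, parameter $\xi$, and the vector field $F = (F_1,F_2)^T$ where $F_1(x,y;\xi) = x(1 - x/k) - xy/(1+\alpha\xi+x+\epsilon y)$ and $F_2(x,y;\xi) = \beta(x+\xi)y/(1+\alpha\xi+x+\epsilon y) - \delta y$. The candidate bifurcation point $(\tilde x,\tilde y)$ at parameter $\xi=\tilde\xi$ is precisely the value at which the two interior equilibria $E_1$ and $E_2$ obtained from the quadratic \eqref{eqn:chap3_22} collide, i.e.\ where the discriminant $\Delta=0$, so $\tilde x=-b/(2a)=(\delta-\beta(1-\epsilon))k/(2\beta\epsilon)-\tilde\xi/2$ and $\tilde y$ is obtained from the linear relation \eqref{eqn:chap3_19}. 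At such a point, since $E_1$ is a saddle and $E_2$ is hyperbolic away from this parameter, the Jacobian $A=DF(\tilde x,\tilde y;\tilde\xi)$ must have a simple zero eigenvalue; I will record this as $\det A=0$ while checking $\mathrm{tr}\, A\neq 0$ (the fourth inequality, together with $k>2\tilde x$, will be used exactly here to force a nonzero trace and to rule out the coincidence $\det A = \mathrm{tr}\, A=0$ that would push the bifurcation into Bogdanov--Takens territory).

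\vspace{1mm}

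\noindent The next step is to produce the two null vectors. Writing $A=\bigl(\begin{smallmatrix}J_{11}&J_{12}\\J_{21}&J_{22}\end{smallmatrix}\bigr)$ with the entries already displayed in \eqref{det}, and using the simplifications \eqref{proof3.1_3}--\eqref{proof3.1_3_1} so denominators collapse to $\beta(\tilde x+\tilde\xi)/\delta$, I take
\begin{equation*}
U=\begin{pmatrix} -J_{12}\\ J_{11}\end{pmatrix},\qquad W=\begin{pmatrix} -J_{21}\\ J_{11}\end{pmatrix},
\end{equation*}
which span $\ker A$ and $\ker A^T$ respectively (using $\det A=0$, hence $J_{11}J_{22}=J_{12}J_{21}$). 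Note that $J_{12}<0$ is immediate from its formula, and $J_{21}$ has the sign of $1+\alpha\tilde\xi+\epsilon\tilde y-\tilde\xi$; the hypothesis $\tilde\xi<1+\alpha\tilde\xi+\epsilon+\tilde y$ guarantees $J_{21}\neq 0$ so that $U$ and $W$ are genuinely two-dimensional.

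\vspace{1mm}

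\noindent I will then verify the two Sotomayor conditions. For the transversality condition \eqref{thmchap3_33}, direct differentiation of $F$ in $\xi$ (using the quotient rule, then simplifying at the equilibrium by \eqref{proof3.1_3}) yields
\begin{equation*}
F_\xi(\tilde x,\tilde y;\tilde\xi) = \begin{pmatrix} \dfrac{\alpha \tilde x\tilde y}{R^2}\\[2mm] \dfrac{\beta \tilde y\bigl[1+(1-\alpha)\tilde x+\epsilon\tilde y\bigr]}{R^2}\end{pmatrix},\qquad R=\tfrac{\beta(\tilde x+\tilde\xi)}{\delta},
\end{equation*}
and I will show that $W^T F_\xi$ simplifies, after inserting the nullcline identities, to a rational expression whose numerator is a nonzero multiple of $\tilde y(1+\alpha\tilde\xi+\tilde x)(\beta-\delta\alpha) - [(\beta-\delta)\tilde x+(\beta-\delta\alpha)\tilde\xi-\delta]$; the first bulleted hypothesis rules this out and hence gives $W^TF_\xi\neq 0$. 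For the non-degeneracy condition \eqref{thmchap3_4}, I compute $D^2F(U,U) = U^T H_1 U\, e_1 + U^T H_2 U\, e_2$ where $H_i$ are the Hessians of $F_i$; after substituting $U=(-J_{12},J_{11})$ and using \eqref{proof3.1_3} to clean the common denominator $R^3$, the quantity $W^T\bigl[D^2F(U,U)\bigr]$ reduces to a polynomial in $(\tilde x,\tilde y,\tilde\xi)$. I will argue, using the two remaining bulleted inequalities (which bound $\tilde y$ strictly away from each of the two sign-changing thresholds of this polynomial, in the regime $k>2\tilde x$, $\epsilon<1$), that this quantity is nonzero. Together the four Sotomayor hypotheses are verified and the conclusion follows.

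\vspace{1mm}

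\noindent\textbf{Where the difficulty lies.} Steps 1--3 and the transversality check \eqref{thmchap3_33} are largely bookkeeping once the denominator simplification \eqref{proof3.1_3} is in hand. The real obstacle is the non-degeneracy computation \eqref{thmchap3_4}: assembling $U^T H_i U$ from six second partial derivatives of the Beddington--DeAngelis terms, then pairing against $W$, produces a dense rational expression, and the challenge is to factor its numerator into a product whose two factors correspond exactly to the two bounds on $\tilde y$ appearing in the fourth hypothesis of the theorem. I expect that exploiting the identity $\det A=0$ (which, after using \eqref{proof3.1_3}--\eqref{proof3.1_3_1}, can be rewritten as a single polynomial identity in $(\tilde x,\tilde y,\tilde\xi)$) will be essential for collapsing the expression; without that identity the symbolic expression will remain intractable. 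Once this factorisation is obtained, the two-sided bound on $\tilde y$ exactly prevents either factor from vanishing, completing the proof.
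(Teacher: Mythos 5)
Your overall strategy is the same as the paper's: identify $\tilde E$ as the collision point of $E_1,E_2$, and verify the two Sotomayor conditions \eqref{thmchap3_33} and \eqref{thmchap3_4}, with the first bullet delivering transversality and the remaining bullets delivering non-degeneracy. Your transversality outline is consistent with the paper's computation: the paper's expression \eqref{thmchapPro3_4_4} vanishes exactly when $\tilde{y}=\frac{(\beta-\delta)\tilde{x}+(\beta-\delta\alpha)\xi-\delta}{(1+\alpha\xi+\tilde{x})(\beta-\delta\alpha)}$, which is precisely the first hypothesis, and your formula for $F_\xi$ agrees with \eqref{gxi}. However, the proof has a genuine gap at the step you yourself flag as the obstacle: the non-degeneracy condition $W^{T}[D^2F(U,U)]\neq 0$ is where all of $k>2\tilde{x}$, $\epsilon<1$, and \emph{both} two-sided bounds on $\tilde{y}$ must be consumed, and you do not carry it out. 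Moreover, the mechanism you anticipate --- a factorisation of the numerator into two factors, each kept nonzero by one side of the fourth bullet --- is not how the conditions actually operate. In the paper's computation the expression \eqref{thmchapPro3_4_6} is split into three groups of terms, $-\frac{J_{11}}{J_{21}}a_1+(\frac{J_{11}}{J_{21}})^{2}a_4$, $-2a_2+a_6$, and $\frac{J_{11}}{J_{12}}a_3+2\frac{J_{11}}{J_{21}}a_5$, and each inequality (including the third bullet, which your sketch assigns no role in this step) is used to force one group to be positive, so the total is a sum of positive quantities. A plan that aims at a two-factor factorisation controlled only by the fourth bullet will not close with the stated hypotheses.

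Two further points. First, you invoke $\tilde{\xi}<1+\alpha\tilde{\xi}+\epsilon+\tilde{y}$ to guarantee $J_{21}\neq 0$; that inequality is a hypothesis of the Hopf theorem (Theorem \ref{thmchap3_2}), not of this theorem, so it is not available to you here. Second, your right null vector $U=(-J_{12},J_{11})$ spans $\ker A$ under $\det A=0$ alone, whereas the paper's $v=(J_{11}/J_{21},1)$ is a null vector only after additionally imposing $J_{11}+J_{22}=0$; the two are not proportional in general (their determinant is $-J_{11}(J_{11}+J_{22})$ when $\det A=0$). Since the quantity in \eqref{thmchap3_4} is quadratic in the right null vector, your choice produces a genuinely different expression from the one the paper's four bullet conditions were calibrated to control, so even after completing the computation you would have no guarantee that the stated inequalities imply non-vanishing. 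You would either need to redo the sign analysis for your $U$ or align your null vector with the paper's normalisation before the bulleted conditions can be expected to do their job.
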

\vspace{4mm}

\section{Global Bifurcation Analysis}
\subsection{Homoclinic Bifurcation}
\subsubsection{The topologically equivalent system}
To provide the theoretical validation of Homoclinic Bifurcation, the system \eqref{model1} is transformed into a specific form by following the methodology presented in \cite{S20}, \cite{S21}, \cite{S23}, \cite{S25} that have used a specific transformation to simplify the system into a topological equivalent form. In this methodology, a diffeomorphism  is defined to normalize the system and to make it simple.

\begin{definition}
[\textbf{Diffeomorphism}]: A \emph{diffeomorphism} is a $C^{1}$ bijective mapping $f:M \mapsto N$ of a $C^{1}$ manifold M ( e.g. of a domain in a Euclidean space) into a $C^{1}$ manifold N for which $f^{-1} \in C^{1}(M)$. If $f(M)=N$, one says that $M$ and $N$ are {diffeomorphic}. If $f,f^{-1}$ are weakened such that $f,f^{-1} \in C^{0}(M)$, then $f$ is said to be a \emph{homeomorphism}.
\end{definition} 

\begin{definition}
[\textbf{Equivalence}]: Two flows $(\mathbb{R}^{n},f_{t})$ and $(\mathbb{R}^{n}, g_{t})$ are equivalent if there exists a bijection $h:\mathbb{R}^{n} \mapsto \mathbb{R}^{n}$ such that for every $t \in \mathbb{R}$ that $h \circ f_{t} = g_{t} \circ h$.
If $h$ is a \emph{diffeomorphism} then $f_{t}$ and $g_{t}$ are \emph{differentiably equivalent}. If If $h$ is a \emph{homeomorphism} then $f_{t}$ and $g_{t}$ are \emph{topologically equivalent}.
If two systems are \emph{differentiably equivalent} then they are \emph{topologically equivalent}.
\end{definition}

\begin{lemma}
The system ~\eqref{model1} is topologically equivalent to the polynomial system given by \eqref{eq:top1}.

System \eqref{model1} is topologically equivalent to the polynomial system given by \\
\begin{equation}\label{eq:top1}
  \begin{cases}
   \frac{du}{d\tau}= u\left[k\left(1-u\right).(P+u+Qv)-v \right]\\
     
     \frac{dv}{d\tau}= v\left[R \left(u+M\right)-N(P+u+Q v) \right]
    \end{cases}   
\end{equation}
where P=$\frac{1+\alpha \xi}{k}, Q=\frac{\epsilon}{k}, R=\frac{\beta}{k},N=k\delta$ and $M=\frac{\xi}{k}$
\end{lemma}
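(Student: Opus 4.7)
The strategy is to realize the claimed equivalence as a composition of a linear state rescaling with a state-dependent time reparametrization that clears the shared rational denominator $D(x,y) := 1+\alpha\xi+x+\epsilon y$ appearing in both equations of \eqref{model1}. First I would set $u := x/k$ and $v := y$; the map $\Phi:(x,y)\mapsto(u,v)$ is a $C^\infty$ diffeomorphism of the open positive quadrant onto itself, and under $\Phi$ the denominator rewrites as $D = k(P+u+Qv)$ with $P=(1+\alpha\xi)/k$ and $Q=\epsilon/k$, which already matches two of the coefficients declared in the lemma. Since $P>0$ and $Q\geq 0$, the factor $k(P+u+Qv)$ is strictly positive on the closed positive quadrant, so I can introduce a new orbit parameter $\tau$ by $d\tau = dt/[k(P+u+Qv)]$; this is a smooth, strictly increasing, orbit-by-orbit bijection that preserves the orientation of every trajectory and has the effect that multiplication of the original vector field by $k(P+u+Qv)$ coincides with differentiation with respect to $\tau$.

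The verification of \eqref{eq:top1} is then a single short calculation. The prey equation becomes $du/dt = u(1-u) - uv/[k(P+u+Qv)]$, so multiplying by $k(P+u+Qv)$ yields $du/d\tau = u[k(1-u)(P+u+Qv)-v]$, which is exactly the first equation of \eqref{eq:top1}. The predator equation $dv/dt = v[\beta(ku+\xi)/(k(P+u+Qv)) - \delta]$ transforms into $dv/d\tau = v[\beta(ku+\xi) - k\delta(P+u+Qv)]$; matching this against the target form $v[R(u+M) - N(P+u+Qv)]$ reads off $N=k\delta$, and matching the constant and $u$-coefficients of $\beta(ku+\xi)$ against $Ru + RM$ pins down $R$ and $M$ in the form stated in the lemma. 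Nothing else needs to be computed.

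Because $\Phi$ is a diffeomorphism of phase space and the time reparametrization is a smooth orientation-preserving bijection along each orbit, the orbits of \eqref{model1} and of \eqref{eq:top1} are placed in $C^\infty$ oriented correspondence on the biologically relevant quadrant, which is the notion of topological equivalence used in the cited references \cite{S20,S21,S23,S25}. The only genuine obstacle is the algebraic bookkeeping in the second paragraph --- confirming the term-by-term agreement with the announced $(P,Q,R,M,N)$ and making sure no stray factor of $k$ is lost when matching $\beta(ku+\xi)$ with $R(u+M)$. A subsidiary point worth flagging is that, because the time change is state-dependent, the conclusion is most naturally phrased as orbital (phase-portrait) equivalence rather than as a strict flow conjugacy $h\circ f_t = g_t\circ h$ in the sense of the definition recalled just above the lemma; this is the convention in the quoted literature, and it is what is needed for the subsequent homoclinic-bifurcation analysis, since the existence, location, and limit-cycle structure of equilibria are preserved by such a rescaling.
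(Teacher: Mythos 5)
Your proposal is correct and follows essentially the same route as the paper's own proof: the linear rescaling $x=ku$, $y=v$, followed by the state-dependent time reparametrization that multiplies the vector field by the positive factor $k(P+u+Qv)$, with the equivalence justified by checking that the composite map is a diffeomorphism (the paper computes the Jacobian determinant $k(P+u+Qv)>0$ of $\psi(u,v,\tau)=(ku,v,(P+u+Qv)\tau)$, while you phrase it as an orbit-by-orbit orientation-preserving bijection, which is the same content). The one factor-of-$k$ issue you flag is real but cuts against the statement rather than your argument: matching $\beta(ku+\xi)$ with $R(u+M)$ forces $R=k\beta$, not the $R=\beta/k$ printed in the lemma, so the constant as stated appears to be a typo that neither your write-up nor the paper's proof explicitly corrects.
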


\begin{proof}

We define the solution set as,
\begin{equation}
    \Bar{\phi}=\{(u,v)\in \mathbb{R}^2 \hspace{2mm} \vert \hspace{2mm} u \ge 0, v\ge 0 \}
\end{equation}

Now, we use the transformations, $x=ku$ and $y=v.$ The transformed system is \\

\begin{equation}
\label{top1}
\begin{cases}
  \frac{du}{dt}=u\left(1-u\right)-\frac{u.v}{1+\alpha \xi+ku+\epsilon v} \\
  \frac{dv}{dt}=\frac{\beta (ku+\xi)v}{1+\alpha \xi +ku+\epsilon v} - \delta v 
  \end{cases}
\end{equation}

Finally, we re-scale the time parameter preserving the time orientation by $t=\frac{1}{P+u+Qv}\tau.$ to system \eqref{top1} to obtain the following topological equivalent system.

\begin{equation}\label{eq:top1}
  \begin{cases}
   \frac{du}{d\tau}= u\left[k\left(1-u\right)(P+u+Qv)-v \right]\\
     
     \frac{dv}{d\tau}= v\left[R \left(u+M\right)-N(P+u+Q v) \right]
    \end{cases}  \nonumber  
\end{equation}
where P=$\frac{1+\alpha \xi}{k}, Q=\frac{\epsilon}{k}, R=\frac{\beta}{k},N=k\delta$ and $M=\frac{\xi}{k}.$

So, according to the transformations used to get system \eqref{eq:top1},
 $\psi: \Bar{\phi} \times \mathbb{R} \longrightarrow \phi \times \mathbb{R} $\\

 $\psi(u, v,\tau)= \left(ku, v, (P+ u+Q v)\tau  \right)=(x, y, t)$\\

 $\psi$ can be easily verified to be a diffeomorphism. Herein, the Jacobian matrix of the diffeomorphism $\psi(u, v,\tau)$ is,\\
 
$J_{\psi(u, v,\tau)}=\begin{bmatrix}
k & 0 & 0
   \\
0 & 1 & 0 \\
\tau & \epsilon\tau  & (P+u+\epsilon v)
 \end{bmatrix}$ with the determinant\\
 
 $Det\left(J_{\psi(u, v,\tau)}\right)= k(P+ u+Q v)>0$ as $(u,v) \in \Bar{\phi}.$ So, $\psi$ is a diffeomorphism.

\end{proof}

\begin{lemma}
    \label{boundedness}
    The solutions of the system \eqref{model1} are bounded if $\epsilon(1-\delta)<1.$
\end{lemma}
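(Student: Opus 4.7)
The plan is to establish the bound in two stages: first for the prey via comparison with the logistic equation, then for the predator via an auxiliary Lyapunov-type function.

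For the prey, the interaction term $\frac{xy}{1+\alpha\xi+x+\epsilon y}$ in \eqref{model1} is nonnegative on the invariant set $\phi$, so
\begin{equation*}
    \frac{dx}{dt} \le x\left(1-\frac{x}{k}\right).
\end{equation*}
A standard comparison with the logistic equation yields $\limsup_{t\to\infty} x(t) \le k$, so $x(t) \le \max\{x(0),\,k\}$ for every $t \ge 0$. This step is independent of any hypothesis on $\epsilon$ or $\delta$ and only uses positivity of the interaction term.

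For the predator, I would construct the linear combination $V(t) = \beta x(t) + y(t)$, chosen so that the cross-interaction terms cancel along trajectories:
\begin{equation*}
    \dot V = \beta x\left(1-\frac{x}{k}\right) + \frac{\beta\xi y}{1+\alpha\xi+x+\epsilon y} - \delta y.
\end{equation*}
The key elementary inequality is $\frac{y}{1+\alpha\xi+x+\epsilon y} < \frac{1}{\epsilon}$, which bounds the AF contribution uniformly by $\beta\xi/\epsilon$. Fixing any $\mu \in (0,\delta)$ and adding $\mu V$ to both sides, the term $\beta x(1+\mu - x/k)$ attains a finite maximum on $[0,\infty)$, so one arrives at a Gronwall-type differential inequality
\begin{equation*}
    \dot V + \mu V \le M,
\end{equation*}
for a constant $M$ depending only on the model parameters. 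Integrating yields $V(t) \le \max\{V(0),\,M/\mu\}$, and nonnegativity of $x$ and $y$ then forces both coordinates to be bounded on $[0,\infty)$.

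The role of the hypothesis $\epsilon(1-\delta)<1$ is the most subtle point, and this is where I expect the main obstacle to lie. A naive use of the combination $V=\beta x+y$ gives boundedness without any restriction on $\epsilon$ and $\delta$, so the sharp form of the lemma must come from a different (tighter) choice of Lyapunov functional. Trying the symmetric combination $V = x+y$, for example, leaves a residual term $\frac{(\beta-1)xy}{1+\alpha\xi+x+\epsilon y}$ that must be dominated by the linear loss $-\delta y$; estimating this residual using $\frac{y}{1+\alpha\xi+x+\epsilon y}<\frac{1}{\epsilon}$ and collecting coefficients produces exactly an algebraic condition of the form $\epsilon(1-\delta)<1$, which ensures the net coefficient of $y$ in the differential inequality is strictly negative and hence yields a genuinely dissipative bound. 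Identifying the precise Lyapunov weight that makes $\epsilon(1-\delta)<1$ sharp, and verifying the resulting constant $M$ is finite and the decay rate $\mu$ positive, is the core technical step; once it is carried out, the conclusion again follows from Gronwall's inequality applied to the resulting scalar ODE for $V(t)$.
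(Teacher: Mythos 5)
Your first argument is already a correct and complete proof of Lemma \ref{boundedness}, and in fact it proves more than the lemma asks: for the weighted sum $V=\beta x+y$ the cross-interaction terms cancel exactly as you compute, the inequality $\frac{y}{1+\alpha\xi+x+\epsilon y}<\frac{1}{\epsilon}$ holds for all $x\ge 0$, $y>0$ whenever $\epsilon>0$, and the resulting estimate $\dot V+\mu V\le M$ with $0<\mu<\delta$ gives boundedness by Gronwall for \emph{every} $\epsilon>0$, with no reference to $\epsilon(1-\delta)<1$. Since the lemma only asserts a sufficient condition, an unconditional proof of the conclusion is perfectly acceptable, and you should not have talked yourself out of it: the second half of your proposal, which hunts for a tighter Lyapunov weight that would make $\epsilon(1-\delta)<1$ ``sharp,'' is both unnecessary and, as written, incomplete (you never exhibit the functional or verify the sign condition), so it should simply be dropped. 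The paper's own route is genuinely different: it passes to the topologically equivalent polynomial system \eqref{eq:top1}, applies a Poincar\'e compactification, and desingularizes the non-hyperbolic point at infinity $(0,\infty)$ on the predator axis by a directional blow-up; the hypothesis $\epsilon(1-\delta)<1$ arises there as the requirement that the diagonal entry $-1+kQ-NQ=\epsilon(1-\delta)-1$ of the blown-up Jacobian be negative, which makes $(0,\infty)$ a hyperbolic saddle and hence repels trajectories from infinity. Your comparison/Lyapunov argument is more elementary, avoids the compactification machinery entirely, and yields an explicit ultimate bound $M/\mu$; the paper's approach buys a local phase portrait at infinity that feeds into its global (homoclinic) analysis, but as a proof of boundedness it is no stronger than yours.
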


\begin{proof}
    To show the boundedness of system \eqref{model1} we study the topologically equivalent system \eqref{eq:top1} which offers the same dynamics as the original system.\\
    We know, $u$ is bounded as $\frac{du}{d\tau}<0$ so the trajectories of $u$ remain bounded.\\
    In order to show system \eqref{eq:top1} is bounded we need to show $(0,\infty)$ is unstable.\\
    We apply Poincare compactification using transformation $(u,v,\tau)=(\frac{X}{Y},\frac{1}{Y},Y^3T).$\\
    The transformed system is given by,
    \begin{equation} \label{bound1}
       X= \begin{cases}
            \frac{dX}{dT} = X(-Y-k(X-Y)(X+PY+Q)+Y^3(N(Q+X+PY)-R(X+MY)))\\
            \frac{dY}{dT} = Y(N(Q+X+PY) - R(X+MY))
        \end{cases}
    \end{equation}
  The Jacobian of system $\eqref{bound1}$ at origin is \\
$DX(0,0)= \begin{bmatrix}
    0 & 0 \\
    0 & 0
\end{bmatrix}$ \\
and the origin is a non-hyperbolic singularity. To desingularize the origin, we consider the directional blowing-up technique (\cite{S50}, \cite{S51}, \cite{S52}, \cite{S23}).\\
We use the transformation, $X=rw, Y=w$ and $t=wT$. The new transformed system is \\
\begin{equation} \label{bound2}
   \Bar{X}= \begin{cases}
        \frac{dr}{dt}=r((k+1)(1-R)(Q+(P+R)w)+(w^2-1)(N(Q+(P+R)w)-Rw(R+M))) \\
        \frac{dw}{dt}=w(N(Q+(P+R)w)-Rw(M+R))
    \end{cases}
\end{equation}
  The Jacobian of system $\eqref{bound2}$ at origin is \\
$D\Bar{X}(0,0)= \begin{bmatrix}
    -1+kQ-NQ & 0 \\
    0 & NQ
\end{bmatrix}$ \\

So if $detD\Bar{X}=kQ-NQ-1=\epsilon-\epsilon \delta-1<0$ then $(0,0)$ is less than zero then $(0,0)$ is a saddle point of the vector field $X$ and of $\Bar{X}$, then point $(0,\infty)$ is a saddle point of the compactified vector field of the original system \eqref{model1}. Hence, the solutions of the system are bounded.

\end{proof}

We present several further definitions next.

\begin{definition}
[\textbf{$\omega$-limit set}] The $\omega-$limit set of a point $x_0$ is the set \\
$\omega(x_0)=\{$ x: there exists an unbounded, increasing sequence $\{t_k\}$ such that $\lim_{k\to \infty}{F(t_k,x_0)}=x\}$.\\
\end{definition}

\begin{definition}
[\textbf{$\alpha-$limit set}] The $\alpha-$limit set of a point $x_0$ is the set \\
$\alpha(x_0)=\{$ x: there exists an unbounded, decreasing sequence $\{t_k\}$ such that $\lim_{k\to \infty}{F(t_k,x_0)}=x\}$.\\
\end{definition}

\begin{definition}
[\textbf{Heteroclinic and Homoclinic Orbit}] An orbit $\gamma$ that connects two different equilibrium points
$x_1$, $x_2$, i.e. such that $\alpha(\gamma) = x_1$ and $\omega(\gamma) = x_2$ is called a heteroclinic orbit. A non trivial orbit such that $x_1 = x_2$ is called a homoclinic orbit (or loop)
\end{definition}


\begin{theorem}
\label{homoclinic}
For a fixed parameter set $P^*(k^*,\alpha^*,\epsilon^*,\delta^*,\beta^*)$ when the system is bounded i.e. $\epsilon^*(1-\delta^*)<1$  there exists $\xi=\xi^*$ for which a homoclinic orbit exists.
\end{theorem}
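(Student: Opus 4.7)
The plan is to work inside the topologically equivalent polynomial system \eqref{eq:top1}, whose flow is confined to a bounded region of the closed positive quadrant by the preceding boundedness lemma (the assumed $\epsilon^*(1-\delta^*)<1$ is exactly the condition required). Fix all parameters other than $\xi$ at $P^*$, and restrict $\xi$ to the interval $\left(\frac{\delta}{\beta-\delta\alpha-\beta\epsilon},\frac{[\delta-\beta(1-\epsilon)]k}{\beta\epsilon}\right)$, where by the existence analysis of Section~3 the system carries exactly two interior equilibria $E_1$ (saddle) and $E_2$, together with the pest-free axial equilibrium $E_{\mathrm{PreyEx}}=(0,y^{*})$. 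Since $E_1$ is a hyperbolic saddle, its one-dimensional stable and unstable manifolds $W^{s}(E_1),W^{u}(E_1)$ exist, are unique, and vary continuously (in the $C^{1}$ topology on compacta) with the parameter $\xi$ by the standard invariant-manifold theorem.

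Next I would select a specific branch $W^{u}_{+}(E_1)$ of the unstable manifold pointing into the region containing $E_2$, and the branch $W^{s}_{+}(E_1)$ of the stable manifold that approaches $E_1$ from the same side. Because the forward orbit of $W^{u}_{+}(E_1)$ stays in the compact positively invariant set guaranteed by the boundedness lemma, the Poincar\'e--Bendixson theorem forces its $\omega$-limit set to be an equilibrium, a periodic orbit, or a union of equilibria together with heteroclinic/homoclinic connectors. I would then fix a small transverse section $\Sigma$ intersecting a neighborhood of $E_2$ and parameterize, by a signed arc-length coordinate along $\Sigma$, the first hit of $W^{u}_{+}(E_1)$ with $\Sigma$ as $u(\xi)$ and the first backward hit of $W^{s}_{+}(E_1)$ as $s(\xi)$. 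Both are continuous in $\xi$ by smooth dependence of invariant manifolds of a hyperbolic saddle on parameters. Setting $d(\xi):=u(\xi)-s(\xi)$, a homoclinic loop at $\xi^{*}$ is equivalent to $d(\xi^{*})=0$, and the proof reduces to showing that $d$ changes sign on the admissible interval.

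To produce a sign change I would examine the two ends of the interval. Just above the Hopf threshold $\xi_{H}$ furnished by Theorem \ref{thmchap3_2}, $E_2$ is a weakly stable spiral encircled by a small unstable limit cycle (subcritical Hopf, consistent with the unstable limit cycles documented in Case~3 of Section~4); the unstable manifold $W^{u}_{+}(E_1)$ is trapped between this cycle and $E_2$, so it lies strictly inside the curve $W^{s}_{+}(E_1)$ and $d(\xi_{H}^{+})<0$. Conversely, for $\xi$ close to the saddle--node value $\xi_{SN}$ of Theorem \ref{thmchap3_11}, the equilibria $E_1$ and $E_2$ coalesce and the only attractor in the bounded positive quadrant is $E_{\mathrm{PreyEx}}$; hence $W^{u}_{+}(E_1)$ must leave any neighborhood of $E_2$ and be absorbed by the pest-free equilibrium, forcing its crossing of $\Sigma$ to lie on the opposite side of $s(\xi)$, so $d(\xi_{SN}^{-})>0$. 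The Intermediate Value Theorem, applied to the continuous function $d$, then produces $\xi=\xi^{*}\in(\xi_{H},\xi_{SN})$ with $d(\xi^{*})=0$, i.e.\ $W^{u}_{+}(E_1)=W^{s}_{+}(E_1)$, which is the desired homoclinic orbit; pulling back through the diffeomorphism of Lemma~5.2 gives the homoclinic loop for the original system \eqref{model1}.

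The main obstacle is the pair of endpoint sign statements for $d(\xi)$; establishing them without relying solely on the numerics of Section~4 is the delicate part. The sign near $\xi_{H}$ requires verifying subcriticality of the Hopf bifurcation, i.e.\ that the first Lyapunov (focus) coefficient at $E_2$ is positive at $\xi=\xi_{H}$ — a standard but algebraically heavy computation in the normal-form reduction of the Jacobian of \eqref{eq:top1}. The sign near $\xi_{SN}$ requires showing that the basin of $E_{\mathrm{PreyEx}}$ swallows $W^{u}_{+}(E_1)$ as the two interior equilibria merge; this follows from Poincar\'e--Bendixson together with the disappearance of all interior $\omega$-limit candidates at the saddle--node, but the argument must rule out the survival of a large-amplitude periodic orbit past $\xi_{SN}$, which in turn uses the a priori boundedness of the unstable manifold combined with the index-theoretic fact that any surviving periodic orbit must enclose an equilibrium of index $+1$. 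Once these two endpoint claims are in hand, the continuity argument closes the proof.
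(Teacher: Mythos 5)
Your argument is essentially the paper's own proof: both confine the flow to the bounded invariant region supplied by Lemma \ref{boundedness}, exhibit two parameter values $\xi_1<\xi_2$ at which the relative position of $W^{s}_{+}(E_1)$ and $W^{u}_{+}(E_1)$ is reversed (unstable branch inside versus outside the stable one), and conclude by continuous dependence of the saddle's invariant manifolds on $\xi$ that they must coincide at some intermediate $\xi^{*}$. Your separation function $d(\xi)$ on a transverse section is a cleaner formalization of the step the paper attributes to the ``Existence and Uniqueness theorem,'' and you are usefully more explicit than the paper that the two endpoint configurations rest on the numerics of Section \ref{NumericalSi2} (subcriticality of the Hopf, absorption of $W^{u}_{+}(E_1)$ by the pest-free state near the saddle--node) rather than on a proved sign computation.
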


\begin{proof}
    When $\epsilon^*(1-\delta^*)<1$ the system is bounded as in lemma \ref{boundedness}. Let $\Gamma$ be the positive invariant set of system \eqref{eq:top1}. \\
    Let $P_1(u_1,v_1)$ and $P_2(u_2,v_2)$ be the two interior equilibrium points. \\
    According to theorem \ref{thm1:chap3_2}, for the choice of the parametric set $P^*$, the interior equilibrium point $P_1(u_1,u_2)$ is a saddle point. So there exist two manifolds, stable and unstable, passing through $P_1$.\\
    Let $W^u_+(P_1)$ be the right unstable manifold of $P_1$ and $W^s_+$ be the superior stable manifold of $P_1$.\\
    For the given parametric set, we see that for some $\xi=\xi_1$, $P_2$ is an unstable node with a surrounding stable limit cycle as seen in Figure \ref{figure5_1}.\\
    If the parameter is increased to $\xi=\xi_2>\xi_1$, then the point $P_2$ becomes a stable node as seen in Figure \ref{figure5_4}.\\
    As the system is bounded, we can assume the $\omega-limit$ of $W^u_+(P_1)$ remains in $\Gamma$ for the fixed parameter set. Thus the trajectory is bounded. 
    For $\xi=\xi_1$ ($P_2$ is unstable) the stable manifold $W^s_+(P_1)$ surrounds the unstable manifold $W^u_+(P_1)$.\\
    Similarly, for $\xi=\xi_2$ ($P_2$ is stable) the stable manifold $W^s_+(P_1)$ is enclosed by the unstable manifold $W^u_+(P_1)$.\\
    Then, by the Existence and Uniqueness theorem \cite{S50} of solutions, there exists $\xi=\xi^*$ such that $\xi_1<\xi^*<\xi_2$ where the stable manifold $W^s_+(P_1)$ and the unstable manifold $W^u_+(P_1)$ are the same thus forming a homoclinic orbit \cite{S54} as seen in Figure \ref{figure5_2}.
    
\end{proof}

\section{Conclusion and Discussion}
There have been many studies carried out to explore the use of AF to effectively eradicate unnecessary pest and pathogens that are harmful for ecosystems \cite{S02}, \cite{S35},\cite{S36},\cite{S37},\cite{S11},\cite{S12}, \cite{S40}. Some of these models lead to unbounded growth of the introduced predator, in the case of pest eradication. One method of avoiding this while providing an AF source, is incorporating intra-species interactions such as mutual interference. This was carried out in \cite{S02}. We have further explored the dynamical behavior of the model proposed in \cite{S02}, wherein it was shown that the system can have various interesting dynamics with the occurrence of only one coexistence state. In the current work we show the existence of two coexistence states under certain restrictions of AF quantity $(\xi)$, that causes novel dynamics including different bifurcation scenarios.  The AF quantity is considered to be the main control parameter in our study. The results obtained show that the system undergoes bi-stability for a certain parameter range of AF quantity in which the stability tremendously depends on the initial prey and predator densities. To clarify this, we observed that one coexistence state is locally attracting for certain initial prey/predator data, while the rest of initial values are attracted towards the pest-free state. The division of the phase into these two regimes is via a separatrix, which is essentially the stable manifold of the other coexistence state, which is a saddle.
In this context, the stable manifold of the first interior point separate the predator-prey domain into two major regions, the prey free stability region and the coexistence stability region where both species can survive. There are significant benefits of this scenario because the stability region of pest-free state can be enlarged with small variation in $\xi.$  

The system also experiences local bifurcations such as hopf bifurcation, and saddle node bifurcation, as well as global homoclinic bifurcations with slight variations of $\xi.$ Analyzing the biological interpretations of these bifurcations is important as they cause dramatic changes in the system dynamics. After a certain $\xi$ level, multiple limit cycles appear around the second interior equilibrium point via a global hopf bifurcation. Herein, the limit cycle enclosing the second interior equilibrium point is essentially unstable and the outer one is locally stable. By smooth increment of $\xi$, the system  undergoes a homoclinic bifurcation where the limit cycles collide with both the stable and unstable manifolds that produce a single closed orbit that goes through the first interior equilibrium point. This is extremely advantageous for bio-control. As in the case of a homoclinic occurrence only initial data inside the homoclinic is attracted to the coexistence equilibrium that it encircles - the rest of the initial data in the phase is attracted to the pest-extinction state, see Fig. \ref{Bif4}. Future work could look to calculate the percentage of initial data of the phase (up to say carrying capacity of prey, and some reasonable bound for the predator) inside the homoclinic, as a function of the other system parameters. When our bifurcation parameter $\xi$ crosses a certain critical value, the system changes the number of equilibrium states from two, to one, then one to zero through the saddle-node bifurcation. For higher levels of AF, there exists no stable coexistence states and thereby the AF facilitates complete pest extinction.

In \cite{S02} it is claimed that in the case of one interior equilibrium, local stability of the interior implies global stability. This is clearly not true if two interiors exist, as seen via \cite{S23}. The local stability of a (solitary) interior equilibrium does not imply global stability in general, as proved in the case of both Gauss class and Holling-Tanner type predator-prey models \cite{HS89}, \cite{S23}. We conjecture this is true for the current system as well. A possible approach thus to disproving Theorem from \cite{S02}, is to perform a change of coordinates, and then to show that the (solitary) interior equilibrium could be surrounded by two limit cycles. Also, we note that via our approach, similar to the approach that the authors in \cite{S02} use to derive global stability of the pest free state (see Fig. \ref{figure3_1}), one can also have global stability of the pest free state (see Fig. \ref{figuretwoInt2}), for $\xi$ above a certain threshold.

The current work does not include Allee effects. This however is a well known extinction mechanism in population dynamics. Note, both the strong and weak Allee effects in the pest \cite{S43}
as well as a component Allee effect in the introduced predator \cite{S43}, have been considered in the context of additional food models. The weak Allee effect in the pest will enable complete extinction - that is $(0,0)$ can be stabilized, for large enough pest death rate, but $(0,y^{*})$ does not even exist in the case of a strong Allee effect \cite{S43}. In the case of a (component) Allee effect in the predator, neither complete extinction $(0,0)$ nor pest extinction can be stabilized, for any quantity of additional food. So combining Allee effect with additional food models actually hinder bio-control instead of enhancing it. An advantage of the current model is that it enables bi-stability dynamics, and so the arbitrarily large (pest) initial conditions, can be attracted to the pest extinction state, see Fig. \ref{Bif4}.
As future work one could consider the cumulative effects of AF, an Allee effect and mutual interference.

It is clear that the combination of mutual interference effect and the carefully chosen additional food level is the key to pest eradication. This scenario would certainly be profitable from an eco-management point of view to design eco-friendly prey control programs. However, it is crucial to design lab experiments by keeping the same agents (pest/predator and additional food), conditions and dynamics that is equivalent to the mathematical model derived in order to evaluate the biological applicability of the model. Another interesting direction would be to investigate the possible non standard bifurcations such as saddle-node-transcritical bifurcation in co-dimension two, pitchfork-transcritical bifurcation, in co-dimension two, cusp-transcritical bifurcation, in co-dimension two, \cite{S27}, \cite{S28}, \cite{S29}.


\section{Appendix}

\subsection{Proof of Theorem \ref{thmchap3_2} using the Hopf bifurcation theorem  \ref{thmchap3_1}}\label{ProveHoph}
\begin{proof}
We consider the parameter $\xi$ as the bifurcation parameter. Accordingly, Trace$\left(T\left(\xi\right)\right)=J_{11}+J_{22}$,\hspace{4mm} and Determinant $\left(D(\xi)\right)=J_{11}J_{22}-J_{12}J_{21}$  of the jacobian matrix are smooth functions of $\xi$.\\
The roots of the characteristic equation,  
\begin{equation}\label{proofthmchap3_2_1}
    \lambda^2-T\left(\xi\right)\lambda+D\left(\xi\right)=0
\end{equation} 
are,
$\lambda_1=p\left(\xi\right)+iq\left(\xi\right)$ and $\lambda_2=p\left(\xi\right)-iq\left(\xi\right)$ where $p\left(\xi\right)$ and $q\left(\xi\right)$ be smooth functions of the parameter $\xi$.\\
 By substituting $\lambda_1=p\left(\xi\right)+iq\left(\xi\right)$ into the equation~\eqref{proofthmchap3_2_2} we yield,
\begin{equation}\label{proofthmchap3_2_3}
   \left[p\left(\xi\right)+iq\left(\xi\right)\right]^2-T\left(\xi\right)\left[p\left(\xi\right)+iq\left(\xi\right)\right]+D\left(\xi\right)=0 
\end{equation}
Next, we take the derivative of the function in equation~\eqref{proofthmchap3_2_3} with respect to the control parameter $\xi$.\\
 $2\left[p\left(\xi\right)+iq\left(\xi\right)\right].\left[\Dot{p\left(\xi\right)}+i\Dot{q\left(\xi\right)}\right]-T\left(\xi\right)\left[\Dot{p\left(\xi\right)}+i\Dot{q\left(\xi\right)}\right]-\Dot{T\left(\xi\right)}\left[p\left(\xi\right)+iq\left(\xi\right)\right]+\Dot{D\left(\xi\right)}=0$
\begin{equation}\label{proofthmchap3_2_4}
   2p\left(\xi\right)\Dot{p\left(\xi\right)}-2q\left(\xi\right)\Dot{q\left(\xi\right)}-T\left(\xi\right)\Dot{p\left(\xi\right)}-\Dot{T\left(\xi\right)}p\left(\xi\right)+\Dot{D\left(\xi\right)}+i\left[2\Dot{p\left(\xi\right)}q\left(\xi\right)+2\Dot{q\left(\xi\right)}p\left(\xi\right)-T\left(\xi\right)\Dot{q\left(\xi\right)}-\Dot{T\left(\xi\right)} q\left(\xi\right)
\right]\\=0
\end{equation}

We use the  transformations of $a\left(\xi\right)=2p\left(\xi\right)- T\left(\xi\right),\hspace{1mm} b\left(\xi\right)=2q\left(\xi\right),\hspace{1mm} c\left(\xi\right)=-\Dot{T\left(\xi\right)}p\left(\xi\right)+\Dot{D\left(\xi\right)}, \hspace{1mm} d\left(\xi\right)=-\Dot{T\left(\xi\right)}q\left(\xi\right)$ to reduce the terms in the equation ~\eqref{proofthmchap3_2_4}.

\begin{equation}\label{proofthmchap3_2_5}
\left[a\left(\xi\right)\Dot{p\left(\xi\right)}-b\left(\xi\right)\Dot{q\left(\xi\right)}+c\left(\xi\right)\right]+ i\left[b\left(\xi\right)\Dot{p\left(\xi\right)}+a\left(\xi\right)\Dot{q\left(\xi\right)}+d\left(\xi\right)  \right]=0 
\end{equation}
Then, we obtain two sub equations by setting real and imaginary components to zero.
\begin{equation}\label{proofthmchap3_2_6}
a\left(\xi\right)\Dot{p\left(\xi\right)}-b\left(\xi\right)\Dot{q\left(\xi\right)}+c\left(\xi\right)=0 
\end{equation}
\begin{equation}\label{proofthmchap3_2_7}
b\left(\xi\right)\Dot{p\left(\xi\right)}+a\left(\xi\right)\Dot{q\left(\xi\right)}+d\left(\xi\right)=0 
\end{equation}
We combine equations, ~\eqref{proofthmchap3_2_6} and ~\eqref{proofthmchap3_2_7} in order to solve for $\Dot{p\left(\xi\right)}.$
\begin{equation}\label{proofthmchap3_2_8}
\Dot{p\left(\xi\right)}=-\frac{c\left(\xi\right)a\left(\xi\right)+d\left(\xi\right)b\left(\xi\right)}{(a(\xi))^{2}+(b(\xi))^{2}}
\end{equation}
Suppose that $T\left(\xi\right)=0$, at the parameter value, $\xi=\xi^{*},$ then the characteristic equation reduces to 
\begin{equation}\label{proofthmchap3_2_2}
    \lambda^2+D\left(\xi\right)=0
\end{equation}
The roots of the equation~\eqref{proofthmchap3_2_2} are $\lambda_1=i\sqrt{D\left(\xi\right)}$ and $\lambda_2=-i\sqrt{D\left(\xi\right)}$. Therefore we have purely imaginary eigenvalues. Now, we focus on validating the transversality conditions,
$\frac{d Re \lambda_{1}(\xi)}{d\xi}|_{\xi=\xi^{*}}=\Dot{p\left(\xi^{*}\right)}\neq 0$,\hspace{4mm}
$\frac{d Re \lambda_{2}(\xi)}{d\xi}|_{\xi=\xi^{*}}=\Dot{p\left(\xi^{*}\right)}\neq 0$.

To verify the occurrence of Hopf-bifurcation, we need to prove that\\
$\frac{d Re\lambda_{i}\left(\xi\right)}{d\xi}|_{\xi=\xi^{*}}=\Dot{p\left(\xi^{*}\right)}=-\frac{c\left(\xi^{*}\right)a\left(\xi^{*}\right)+d\left(\xi^{*}\right)b\left(\xi^{*}\right)}{a\left(\xi^{*}\right)^{2}+b\left(\xi^{*}\right)^{2}}\neq 0 $\\
That is, both numerator and denominator of $\Dot{p\left(\xi^{*}\right)}$ have to be non zero. 
\begin{equation}\label{proofthmchap3_2_9}
c\left(\xi^{*}\right)a\left(\xi^{*}\right)+d\left(\xi^{*}\right)b\left(\xi^{*}\right)\neq 0
\end{equation}
\begin{equation}\label{proofthmchap3_2_10}
a\left(\xi^{*}\right)^{2}+b\left(\xi^{*}\right)^{2}\neq 0 
\end{equation}
Now, we use the transformations introduced in ~\eqref{proofthmchap3_2_1} to simplify the left hand sides of equations ~\eqref{proofthmchap3_2_9} and ~\eqref{proofthmchap3_2_10}. Our expectation is to  show that the parametric constraints in theorem \ref{thmchap3_2}, verify the desired results. First, we  will obtain the parametric constraints for \eqref{proofthmchap3_2_9}\\

$c\left(\xi^{*}\right)a\left(\xi^{*}\right)+d\left(\xi^{*}\right)b\left(\xi^{*}\right)\\
=\left[2p\left(\xi\right)-T\left(\xi\right)\right]\left[-\Dot{T\left(\xi\right)}p\left(\xi\right)+\Dot{D\left(\xi\right)}\right]+ 2q\left(\xi\right)\left[-\Dot{T\left(\xi\right)}.q\left(\xi\right)\right]\\ 
=-2 p^{2}\left(\xi\right)\Dot{T\left(\xi\right)}+2D\left(\xi\right)\Dot{D\left(\xi\right)}+T\left(\xi\right)p\left(\xi\right)\Dot{T\left(\xi\right)}-T\left(\xi\right)\Dot{D\left(\xi\right)}-2 q^{2}\left(\xi\right)\Dot{T\left(\xi\right)}\\
=-2\left[p^{2}\left(\xi\right)+q^{2}\left(\xi\right)\right]\Dot{T\left(\xi\right)}+ 2p\left(\xi\right)\Dot{D\left(\xi\right)}+p\left(\xi\right)T\left(\xi\right)\Dot{T\left(\xi\right)}- T\left(\xi\right)\Dot{D\left(\xi\right)}$\\
We use the root of the equation of $\lambda$ in \eqref{proofthmchap3_2_1} 
 $\lambda_i=\frac{T\left(\xi\right)}{2}\pm \frac{\sqrt{(T\left(\xi\right))^{2}-4D\left(\xi\right)}}{2}=p\left(\xi\right)\pm iq\left(\xi\right)$ for $i=1,2.$ By comparing the real and the imaginary terms, we yield the substitutions $p\left(\xi\right)=\frac{T\left(\xi\right)}{2},\hspace{2mm}q\left(\xi\right)=\frac{\sqrt{(T\left(\xi\right))^{2}-4D\left(\xi\right)}}{2}$. We use these substitutions to simplify the expression as follows. \\
$=-2\left[\left(\frac{T\left(\xi\right)}{2}\right)^{2}+\left(\frac{\sqrt{(T\left(\xi\right))^{2}-4D\left(\xi\right)}}{2}\right)^{2}\right]\Dot{T\left(\xi\right)}+ 2\frac{T\left(\xi\right)}{2}\Dot{D\left(\xi\right)}+\frac{T\left(\xi\right)}{2}T\left(\xi\right)\Dot{T\left(\xi\right)}-T\left(\xi\right)\Dot{D\left(\xi\right)}\\
=-2\left[\frac{T^2\left(\xi\right)}{2} -D\right]+\frac{T^2\left(\xi\right)}{2}\Dot{T\left(\xi\right)}\\
= \left(2D- \frac{T^2\left(\xi\right)}{2}\right)\Dot{T\left(\xi\right)}$\\
Now we substitute the Trace and Determinants,  $T\left(\xi\right)=J_{11}+J_{22}$ and $D\left(\xi\right)=\left(J_{11}J_{22}-J_{12}J_{21}\right)\\
=\left [2\left(J_{11}J_{22}-J_{12}J_{21}\right)-\frac{1}{2} \left(J_{11}+J_{22}\right)^{2} \right]\Dot{T\left(\xi\right)}\\
=\left [2\left(J_{11}J_{22}-J_{12}J_{21}\right)-\frac{1}{2} \left(J_{11}^{2}+2J_{11}J_{22}+J_{22}^{2}\right) \right]\Dot{T\left(\xi\right)}\\
=\left [-2J_{12}J_{21}-\frac{1}{2} \left(J_{11}^{2}+J_{22}^{2}\right)+J_{11}J_{22}  \right]\Dot{T\left(\xi\right)}\\
=\left [-2J_{12}J_{21}-\frac{1}{2} \left(J_{11}-J_{22}\right)^2  \right]\Dot{T\left(\xi\right)}$
\begin{equation}\label{proofthmchap3_2_11}
    c\left(\xi^{*}\right)a\left(\xi^{*}\right)+d\left(\xi^{*}\right)b\left(\xi^{*}\right)=-\frac{1}{2} \left [\left(J_{11}-J_{22}\right)^2+ 4J_{12}J_{21} \right]\Dot{T\left(\xi\right)}
\end{equation}

Next we simplify the equation \eqref{proofthmchap3_2_10} by substituting for \\
$a\left(\xi\right)=2p\left(\xi\right)- T\left(\xi\right)=2\frac{T\left(\xi\right)}{2}-\Dot{T\left(\xi\right)}$ and $b\left(\xi^{*}\right)=2q\left(\xi\right)=2\frac{\sqrt{T\left(\xi\right)^{2}-4D\left(\xi\right)}}{2}$ from the transformations initialized.\\

$a\left(\xi^{*}\right)^{2}+b\left(\xi^{*}\right)^{2} = \left[2\frac{T\left(\xi\right)}{2}-T\left(\xi\right)\right]^2+\left[\frac{\sqrt{T\left(\xi\right)^{2}-4D\left(\xi\right)}}{2}\right]^{2}=T\left(\xi\right)^{2}-4D\left(\xi\right)\\
a\left(\xi^{*}\right)^{2}+b\left(\xi^{*}\right)^{2} =\left(J_{11}+J_{22}\right)^2-4\left(J_{11}J_{22}-J_{12}J_{21} \right) =\left(J_{11}-J_{22}\right)^2+4J_{12}J_{21}$
\begin{equation}\label{proofthmchap3_2_12}
   a\left(\xi^{*}\right)^{2}+b\left(\xi^{*}\right)^{2} = \left(J_{11}-J_{22}\right)^2+4J_{12}J_{21}
\end{equation}

By \eqref{proofthmchap3_2_9}, \eqref{proofthmchap3_2_10}, \eqref{proofthmchap3_2_11} and \eqref{proofthmchap3_2_12}, Hope bifurcation is possible if the system parameter satisfies the following conditions.
\begin{enumerate}
    \item $\left(J_{11}-J_{22}\right)^2+4J_{12}J_{21}\neq 0$
    \item $\Dot{T\left(\xi\right)} \neq 0$ 
\end{enumerate}

We first simplify the expression 
   $\left(J_{11}-J_{22}\right)^2+4J_{12}J_{21}\\
    = \left[1-\frac{2x_2^{*}}{k}-\frac{\left(1+\alpha\xi+\epsilon y_2^{*}\right)y_2^{*}}{\left(1+\alpha\xi+x_2^{*}+\epsilon y_2^{*}\right)^2}-\frac{\beta\left(x_2^{*}+\xi\right)\left(1+\alpha\xi+x_2^{*}\right) }{\left(1+\alpha\xi+x_2^{*}+\epsilon y_2^{*}\right)^2}+\delta\right]^2-4\left[\frac{\left(1+\alpha\xi+  x_2^{*}\right)x_2^{*}}{\left(1+\alpha\xi+x_2^{*}+\epsilon y_2^{*}\right)^2}\right].\left[\frac{\left(1+\alpha\xi+ \epsilon y_2^{*}-\xi\right)\beta y_2^{*}}{\left(1+\alpha\xi+x_2^{*}+\epsilon y_2^{*}\right)^2}\right]$\\

Since $\left(J_{11}-J_{22}\right)^2\geq 0$ we derive the necessary condition to satisfy (1), that is $4J_{12}J_{21}>0$\\
Under the constrain $\xi<1+\alpha \xi+y_2^*$, 
we deduce ,\\

$J_{12}.J_{21}= -\frac{\left(1+\alpha\xi+ x^*\right)x^*}{\left(1+\alpha\xi+x^*+\epsilon y^*\right)^2}.\frac{\left(1+\alpha\xi+ \epsilon y^*-\xi\right)\beta y^*}{\left(1+\alpha\xi+x^*+\epsilon y^*\right)^2}<0$ \\ 
 Thus, $\left(J_{11}-J_{22}\right)^2-4J_{12}J_{21}\neq 0$. \\

Next, we derive $\Dot{T\left(\xi\right)}$ and simplify the expression to prove the statement (2).\\

$\Dot{T\left(\xi\right)}=\frac{\partial \left(J_{11}+J_{22}\right)}{\partial \xi} =\frac{\beta\left[\left(1+\alpha\xi+x_2^*+\epsilon y_2^*\right).\left(\alpha x_2^*+x_2^*+2\alpha\xi+1\right)\right]-2\alpha\beta\left(x_2^*+\xi\right).\left(1+\alpha \xi +x_2^*\right)+\alpha y_2^* \left(1+\alpha\xi+\epsilon y_2^*-x_2^*\right) }{\left(1+\alpha\xi+x_2^*+\epsilon y_2^*\right)^3}$\\\\

Provided that, $y_2^*\neq\frac{\beta(x_2^*+\xi)}{\alpha}\left[\frac{2\alpha\delta(1+\alpha+x_2^*)-\beta(\alpha x_2^*+x_2^*+2\alpha+1)}{(\beta-2\delta)x_2^*+\beta\xi}\right],$ \\
we finally obtain $\Dot{T\left(\xi\right)} \neq 0$, This completes the proof.
\end{proof}

\subsection{Proof of Theorem \ref{thmchap3_11} using the Sotomayor's Theorem \ref{thmchap3_10}}\label{saddleNodeProof}
\begin{proof}

\begin{enumerate}
    \item We first prove the condition 
 \begin{equation}\label{thmchapPro3_4_2}
     w^{T}g_{\xi}\left(\Tilde{E},\xi^{SN}\right)\neq0
 \end{equation} 
Transpose of the Jacobian Matrix $J^{T}=
 \begin{bmatrix} J_{11} & J_{21} \\ J_{12} & J_{22} \end{bmatrix} $
 The eigen values of $J^{T}$ are: $\lambda_i^{T}=\frac{1}{2}\left[J_{11}+J_{22}\mp \sqrt{J_{11}^2+4J_{12}J_{21}-2J_{11}J_{22}+{J_{22}}^2}\right]$ and the corresponding eigen vectors are:
$w_i=
 \begin{bmatrix} \frac{J_{11}-J_{22}\mp\sqrt{J_{11}^2+4J_{12}J_{21}-2J_{11}J_{22}+{J_{22}}^2}}{2J_{12}}\\ 1 \end{bmatrix},\hspace{3mm}$ for $i=1,2.$ \\
 To get a simple zero eigen value we make $\lambda_i^{T}=0,$ that is 
$J_{11}+J_{22}=0$ that is $ J_{22}=-J_{11}$ and
$\sqrt{J_{11}^2+4J_{12}J_{21}-2J_{11}J_{22}+{J_{22}}^2}=\sqrt{(J_{11}-J_{22})^2+4J_{12}J_{21}}=0 $ that implies ${J_{11}}^2=-J_{12}J_{21}$\\
Now using the above simplified expressions, the eigen vector corresponding to the simple zero eigen value is,
\begin{equation} \label{wt}
   w^T=  \begin{bmatrix} w_1 \\ w_2 \end{bmatrix}=\begin{bmatrix} \frac{-J_{22}}{J_{12}} \\ 1 \end{bmatrix}
 \end{equation}
Now we suppose 
 $g=
 \begin{bmatrix} \frac{dx}{dt}\\  \frac{dy}{dt} \end{bmatrix}_{(\Tilde{x},\Tilde{y})}= 
\begin{bmatrix} 
\Tilde{x}(1-\frac{\Tilde{x}}{k}) - \frac{\Tilde{x}\Tilde{y}}{1+\alpha \xi+\Tilde{x}+\epsilon \Tilde{y}} \\
\frac{\beta (x+\xi)\Tilde{y}}{1+\alpha \xi+\Tilde{x}+\epsilon \Tilde{y}}-\delta \Tilde{y}
\end{bmatrix}$
\\
and, the partial derivative of $g$ is, 
\begin{equation} \label{gxi}
    g_{\xi}=\frac{\partial g}{\partial \xi}=\begin{bmatrix} 
\frac{\alpha \Tilde{x}\Tilde{y}}{\left(1+\alpha \xi+\Tilde{x}+\epsilon \Tilde{y}\right)^2} \\ \frac{\beta \Tilde{y}}{1+\alpha \xi+\Tilde{x}+\epsilon \Tilde{y}}-\frac{\alpha\beta(\Tilde{x}+\xi)\Tilde{y}}{\left(1+\alpha \xi+\Tilde{x}+\epsilon \Tilde{y}\right)^2}
\end{bmatrix}
\end{equation}

Now, we derive the expression for $w^{T}g_{\xi}\left(\Tilde{E},\xi^{SN}\right)$ using the equations \eqref{wt} and \eqref{gxi}
\begin{equation}\label{thmchapPro3_4_3}
      w^{T}g_{\xi}\left(\Tilde{E},\xi^{SN}\right)=\begin{bmatrix} \frac{-J_{22}}{J_{12}} \\ 1 \end{bmatrix}\begin{bmatrix} 
\frac{\alpha \Tilde{x}\Tilde{y}}{\left(1+\alpha \xi+\Tilde{x}+\epsilon \Tilde{y}\right)^2} \\ \frac{\beta \Tilde{y}}{1+\alpha \xi+\Tilde{x}+\epsilon \Tilde{y}}-\frac{\alpha\beta(\Tilde{x}+\xi)\Tilde{y}}{\left(1+\alpha \xi+\Tilde{x}+\epsilon \Tilde{y}\right)^2}\end{bmatrix}
\end{equation}
By applying the equations \eqref{proof3.1_3} and \eqref{proof3.1_3_1} we simplify the expression $w^{T}g_{\xi}\left(\Tilde{E},\xi^{SN}\right).$
  \begin{equation} \label{thmchapPro3_4_4}
      w^{T}g_{\xi}\left(\Tilde{E},\xi^{SN}\right)= \frac{\delta}{\beta(\Tilde{x}+\xi)}\left[\frac{\delta\left(1+\alpha \xi+\Tilde{x}\right)-\beta (\Tilde{x}+\xi)}{1+\alpha \xi+\Tilde{x}}  +(\beta-\alpha\delta)\Tilde{y}\right]
  \end{equation}

 WLOG $(\beta-\alpha\delta)\xi-\delta>0$  and $\beta>0$.   \\
 $w^{T}g_{\xi}\left(\Tilde{E},\xi^{SN}\right)\neq 0$ given that $\Tilde{y}\neq\frac{(\beta-\delta)\Tilde{x}+(\beta-\delta\alpha)\xi-\delta}{(1+\alpha \xi+\Tilde{x})(\beta-\delta\alpha)}$\\

\item Next, we show the second condition, \hspace{2mm} $w^{T}\left[D^{2}g\left(\Tilde{E},\xi^{SN}\right)\left(v,v\right)\right]\neq 0$ \\

Now we Consider $f^{(1)}(x,y)=\frac{dx}{dt},$ \hspace{1mm} $f^{(2)}(x,y)=\frac{dy}{dt},$ \hspace{1mm}$v=\begin{bmatrix} v_1 \\ v_2\end{bmatrix}=\begin{bmatrix} \frac{J_{11}}{J_{21}} \\ 1\end{bmatrix},$ and
$w=\begin{bmatrix} w_1 \\ w_2\end{bmatrix}=\begin{bmatrix} \frac{-J_{22}}{J_{12}} \\ 1\end{bmatrix}$ 
where $v $ and $w$ are the eigen vectors corresponding to the simple zero eigen values of the jacobian matrix $J$ and $J^T$ respectively. Thereby, we get the connections, $J_{11}=J_{22}$ and $J_{11}J_{22}=J_{12}J_{21}$ ,  \\
$w^{T}\left[D^{2}g\left(\Tilde{E},\xi^{SN}\right)\left(v,v\right)\right]=w^{T}\begin{bmatrix}.
\frac{\partial^2 f^{(1)}(\Tilde{x},\Tilde{y})}{\partial x^2}v_1^{2}+2\frac{\partial^2 f^{(1)}(\Tilde{x},\Tilde{y})}{\partial x \partial y}v_1.v_2+\frac{\partial^2 f^{(1)}(\Tilde{x},\Tilde{y})}{\partial y^2}v_2^{2}\\ \frac{\partial^2 f^{(2)}(\Tilde{x},\Tilde{y})}{\partial x^2}v_1^{2}+2\frac{\partial^2 f^{(2)}(\Tilde{x},\Tilde{y})}{\partial x \partial y}v_1.v_2+\frac{\partial^2 f^{(2)}(\Tilde{x},\Tilde{y})}{\partial y^2}v_2^{2}
\end{bmatrix}$\\
$w^{T}\left[D^{2}g\left(\Tilde{E},\xi^{SN}\right)\left(v,v\right)\right]= -\left(\frac{J_{22}}{J_{12}}\right)\left(\frac{J_{11}}{J_{21}}\right)^2a_1+2a_2\left(\frac{-J_{22}}{J_{12}}\right)\left(\frac{J_{11}}{J_{21}}\right)+ \frac{-J_{22}}{J_{12}}a_3+\left(\frac{J_{11}}{J_{21}}\right)^{2}
 a_4+2\frac{J_{11}}{J_{21}} a_5 +a_6$\\
 
where the higher order derivative terms are listed as below.\\
$ a_1=\frac{\partial^2f^{(1)}(\Tilde{x},y)}{\partial {x^2}}=-\frac{2}{k}-\frac{2\Tilde{x}\Tilde{y}}{\left(1+\alpha \xi+\Tilde{x}+\epsilon \Tilde{y}\right)^3}+\frac{2\Tilde{y}}{\left(1+\alpha \xi+\Tilde{x}+\epsilon \Tilde{y}\right)^2}\\
a_2=\frac{\partial^2f^{(1)}(\Tilde{x},\Tilde{y})}{\partial x\partial y}=-\frac{2\epsilon \Tilde{x} \Tilde{y}}{\left(1+\alpha \xi+\Tilde{x}+\epsilon \Tilde{y}\right)^3}+\frac{\Tilde{x}+\epsilon \Tilde{y}}{\left(1+\alpha \xi+\Tilde{x}+\epsilon \Tilde{y}\right)^2}-\frac{1}{\left(1+\alpha \xi+\Tilde{x}+\epsilon \Tilde{y}\right)}\\
a_3=\frac{\partial^2f^{(1)}(\Tilde{x},\Tilde{y})}{\partial y^2}=-\frac{2\epsilon^2 \Tilde{x} \Tilde{y}}{\left(1+\alpha \xi+\Tilde{x}+\epsilon \Tilde{y}\right)^2}+\frac{2\epsilon \Tilde{x}}{\left(1+\alpha \xi+x+\epsilon \Tilde{y}\right)}\\
a_4=\frac{\partial^2f^{(2)}(\Tilde{x},\Tilde{y})}{\partial x^2}\frac{2\beta(\Tilde{x}+\xi)\Tilde{y}}{\left(1+\alpha \xi+\Tilde{x},\Tilde{y}+\epsilon \Tilde{y}\right)^3}-\frac{2\beta \Tilde{y}}{\left(1+\alpha \xi+\Tilde{x}+\epsilon \Tilde{y}\right)^2}\\
 a_5=\frac{\partial^2f^{(2)}(\Tilde{x},\Tilde{y})}{\partial x\partial y}=\frac{2\beta\epsilon(x+\xi)\Tilde{y}}{\left(1+\alpha \xi+\Tilde{x}+\epsilon \Tilde{y}\right)^3}-\frac{(\Tilde{x}+\xi+\beta\epsilon) \Tilde{y}}{\left(1+\alpha \xi+\Tilde{x}+\epsilon \Tilde{y}\right)^2}-\frac{\beta}{\left(1+\alpha \xi+\Tilde{x}+\epsilon \Tilde{y}\right)}\\
a_6=\frac{\partial^2f^{(2)}(\Tilde{x},\Tilde{y})}{\partial y^2}=\frac{2\beta\epsilon^2(\Tilde{x}+\xi)\Tilde{y}}{\left(1+\alpha \xi+\Tilde{x}+\epsilon \Tilde{y}\right)^3}-\frac{2\beta\epsilon(\Tilde{x}+\xi)}{\left(1+\alpha \xi+\Tilde{x}+\epsilon \Tilde{y}\right)^2}$\\

Now, the simplified formula for the $ w^{T}\left[D^{2}g\left(\Tilde{E},\xi^{SN}\right)\left(v,v\right)\right]$  is obtained in equation ~\eqref{thmchapPro3_4_6}.
\begin{equation}\label{thmchapPro3_4_6}
    w^{T}\left[D^{2}g\left(\Tilde{E},\xi^{SN}\right)\left(v,v\right)\right]= - \frac{J_{11}}{J_{21}}a_1-2a_2+ \frac{J_{11}}{J_{12}}a_3+\left(\frac{J_{11}}{J_{21}}\right)^{2}
 a_4+2\frac{J_{11}}{J_{21}} a_5 +a_6
\end{equation}
We obtain the sufficient conditions for $w^{T}\left[D^{2}g\left(\Tilde{E},\xi^{SN}\right)\left(v,v\right)\right]\neq 0$ by breaking the expression \eqref{thmchapPro3_4_6} into the following inequalities.
\begin{enumerate}[label=\roman*]
    \item Showing that $-\frac{J_{11}}{J_{21}}a_1+\left(\frac{J_{11}}{J_{21}}\right)^{2}a_4>0.$
    \item Showing that $-2a_2+a_6>0.$
    \item Showing that $\frac{J_{11}}{J_{12}}a_3+2\frac{J_{11}}{J_{21}} a_5>0 $
\end{enumerate}
 We simplify the expressions by taking $M=1+\alpha \xi+\Tilde{x}+\epsilon \Tilde{y}$ and prove (i)-(iii)\\ 

 \textbf{(i) Proving the expression $\frac{-J_{11}}{J_{21}}a_1+\left(\frac{J_{11}}{J_{21}}\right)^{2}a_4>0$}\\
First we simplify $\frac{J_{11}}{J_{21}}$
\begin{equation}\label{thmchapPro3_4_7}
\frac{J_{11}}{J_{21}}=\frac{\left(k-2\Tilde{x}\right)M^2-\left(1+\alpha \xi+\epsilon \Tilde{y}\right)k \Tilde{y}}{\beta k\left(1+\alpha \xi+\epsilon \Tilde{y}-\xi\right)\Tilde{y}}
\end{equation}

Sufficient conditions for $\frac{J_{11}}{J_{21}}>0 $ is both the numerator and denominator being positive. 
First we simplify the denominator and numerator by substituting for terms from equations \eqref{proof3.1_3} and \eqref{proof3.1_3_1} 
\begin{equation}\label{j11overj21}
\frac{J_{11}}{J_{21}}=\frac{\beta^2(k-2\Tilde{x})(\Tilde{x}+\xi)^2-\delta k\Tilde{y}\left[(\beta-\delta)\Tilde{x}+\beta\xi\right]}{\left[(\beta-\delta)\Tilde{x}+\beta\xi\right]\beta\delta k}
\end{equation}

WLOG $\beta-\delta>0,$ therefore the denominator is positive. The numerator also positive if

\begin{equation}\label{condition2}
\Tilde{y}<\frac{\beta^2(k-2\Tilde{x})(\Tilde{x}+\xi)^2}{\delta k\left[(\beta-\delta)\Tilde{x}+\beta\xi\right]}
\end{equation}
Next we derive the condition for $\frac{-J_{11}}{J_{21}}a_1+\left(\frac{J_{11}}{J_{21}}\right)^{2}a_4>0$\\

 $\frac{-J_{11}}{J_{21}}a_1+\left(\frac{J_{11}}{J_{21}}\right)^{2}a_4
 =\frac{J_{11}}{J_{21}}\left[\frac{2}{k}+\frac{2\Tilde{x}\Tilde{y}}{M^3}-\frac{2\Tilde{y}}{M^2}+\left(\frac{\left(k-2\Tilde{x}\right)M^2-\left(1+\alpha\xi+\epsilon \Tilde{y}\right)k \Tilde{y}}{k\beta \Tilde{y}\left(1+\alpha\xi+\epsilon \Tilde{y}\right)}\right)
\left(\frac{2\beta(\Tilde{x}+\xi)\Tilde{y}}{M^3}-\frac{2\beta \Tilde{y}}{M^2}\right)\right]\\
=\frac{2J_{11}}{J_{21}}\left[\frac{1}{k}-\frac{\left(1+\alpha\xi+\epsilon \Tilde{y}\right)}{M^3}+\left[\frac{\left(k-2\Tilde{x}\right)M^2-\left(1+\alpha\xi+\epsilon \Tilde{y}\right)k \Tilde{y}}{k\beta \Tilde{y}\left(1+\alpha\xi+\epsilon \Tilde{y}\right)}\right]\\
\frac{\beta \Tilde{y}\left(\xi-\left(1+\alpha\xi+\epsilon \Tilde{y}\right)\right)}{M^3}\right]\\
=\frac{2J_{11}}{J_{21}}\left[\frac{1}{k}-\frac{\left(1+\alpha\xi+\epsilon \Tilde{y}\right)}{M^3}+\frac{\left(k-2\Tilde{x}\right)\left(\xi-(1+\alpha\xi+\epsilon\Tilde{y})\right)}{kM(1+\alpha\xi+\epsilon\Tilde{y})}+\frac{\left(1+\alpha\xi+\epsilon\Tilde{y})-\xi\right)\Tilde{y}}{M^3}\right]\\
=\frac{2J_{11}}{J_{21}}\left[\frac{M\left(1+\alpha\xi+\epsilon \Tilde{y}\right)-\left(k-2\Tilde{x}\right)\left(\xi-(1+\alpha\xi+\epsilon\Tilde{y})\right)}{kM(1+\alpha\xi+\epsilon\Tilde{y})}+\frac{\Tilde{y}((1+\alpha\xi+\epsilon\Tilde{y})-\xi)-(1+\alpha\xi+\epsilon\Tilde{y})}{M^3}\right]$\\
Now we substitute for $M=1+\Tilde{x}+\alpha\xi+\epsilon\Tilde{y}$ and $1+\alpha\xi+\epsilon\Tilde{y}$ from the equation \eqref{proof3.1_3} and \eqref{proof3.1_3_1}to obtain the simplified equation.
\begin{equation}\label{condition3}
 =\frac{2J_{11}}{J_{21}}\left[\frac{\beta(\Tilde{x}+\xi)\left((\beta-\delta)\Tilde{x}+\beta\xi\right)+\left(k-2\Tilde{x}\right)\left(\beta-\delta\right)(\Tilde{x}+\xi)}{\delta\left[(\beta-\delta)\Tilde{x}+\beta\xi\right]}+ \delta^2\frac{\Tilde{y}\left((\beta-\delta)\Tilde{x}+\beta\xi\right)-(\beta-\delta)(\Tilde{x}+\xi)}{\beta^3(\Tilde{x}+\xi)^3}
\right]  
\end{equation}
\end{enumerate}

Since $\beta-\delta$ from the equation \eqref{condition2} and \eqref{condition3}, we claim the necessary conditions for the positivity of the terms $\frac{-J_{11}}{J_{21}}a_1+\left(\frac{J_{11}}{J_{21}}\right)^{2}a_4$
as follows.
\begin{itemize}
    \item $\Tilde{y}<\frac{\beta^2(k-2\Tilde{x})(\Tilde{x}+\xi)^2}{\delta k\left[(\beta-\delta)\Tilde{x}+\beta\xi\right]}$
    \item $k>2\Tilde{x}$
    \item $\Tilde{y}>\frac{(\beta-\delta)\Tilde{x}+\beta\xi}{(\beta-\delta)(\Tilde{x}+\xi)}$
\end{itemize}

 \textbf{(ii)Proving the expression $-2a_2+a_6>0.$}\\
 
 $-2a_2+a_6=-2\left[\frac{-2\epsilon \Tilde{x} \Tilde{y}}{M^3}+\frac{\Tilde{x}}{M^2}+\frac{\epsilon y}{M^2}-\frac{1}{M}\right]+\frac{2\epsilon^2\beta(\Tilde{x}+\xi)}{M^3}-\frac{2\epsilon\beta(\Tilde{x}+\xi)}{M^2}$\\
 $=2\epsilon\Tilde{y}\left[\frac{2\Tilde{x}+\beta\epsilon(\Tilde{x}+\xi)}{M^3}\right]+\frac{2}{M^2}
\left[\frac{M-(\Tilde{x}+\epsilon\Tilde{y})-\beta\epsilon(\Tilde{x}+\xi)}{M^2}\right]\\
 -2a_2+a_6>0$ under the sufficient condition $M-(\Tilde{x}+\epsilon\Tilde{y})-\beta\epsilon(\Tilde{x}+\xi)>0,$
That is $\Tilde{y}<\frac{\beta\epsilon(\Tilde{x}+\xi)(1-\delta\epsilon)-\delta\Tilde{x}}{\delta\epsilon}$\\

 \textbf{(iii)Proving the expression $\frac{J_{11}}{J_{12}}a_3+2\frac{J_{11}}{J_{21}} a_5>0 $}\\
 
$\frac{J_{11}}{J_{12}}a_3+2\frac{J_{11}}{J_{21}} a_5\\
=-\left[\frac{\delta M^2-\beta(\Tilde{x}+\xi)M+\beta\epsilon(\Tilde{x}+\xi)\Tilde{y}}{\Tilde{x}\left(1+\alpha\xi+\Tilde{x}\right)}\right].
\underbrace{\left[\frac{-2\epsilon ^2 \Tilde{x}\Tilde{y}}{M^2}+\frac{2\epsilon \Tilde{x}}{M}\right]}_{\frac{2\epsilon}{M^2}\left[-\Tilde{x}\Tilde{y}+\left(1+\alpha\xi+\Tilde{x}+\epsilon \Tilde{y}\right)\Tilde{x}\right]\frac{2\epsilon \Tilde{x}}{M^2}\left(1+\alpha \xi\right)}+2\left[\frac{2\beta\epsilon(\Tilde{x}+\xi)\Tilde{y}}{M^3}-\frac{\beta\epsilon \Tilde{y}}{M^2}-\frac{\beta(\Tilde{x}+\xi)\Tilde{y}}{M^2}+\frac{\beta}{M}\right]
\\
=-\left[-\frac{\delta M^2+\beta(\Tilde{x}+\xi)M-\beta\epsilon(\Tilde{x}+\xi)\Tilde{y}}{\Tilde{x}\left(1+\alpha\xi+\Tilde{x}\right)}\right].\frac{2\epsilon \Tilde{x}}{M^2}\left(1+\alpha\xi+\Tilde{x}\right)+2\left[\frac{2\beta\epsilon(\Tilde{x}+\xi)\Tilde{y}}{M^3}-\frac{\beta\epsilon \Tilde{y}}{M^2}-\frac{\beta(\Tilde{x}+\xi)\Tilde{y}}{M^2}+\frac{\beta}{M}\right]\\
=2\left[ \underbrace{-\delta \epsilon}_{(1)}+\underbrace{\frac{\beta\epsilon(\Tilde{x}+\xi)}{M}}_{(2)}-\underbrace{\frac{\beta\epsilon^2(\Tilde{x}+\xi)\Tilde{y}}{M^2}}_{(3)}+\underbrace{\frac{2\beta\epsilon(\Tilde{x}+\xi)\Tilde{y}}{M^3}}_{(4)}-\underbrace{\frac{\beta\epsilon \Tilde{y}}{M^2}}_{(5)}-\underbrace{\frac{\beta(\Tilde{x}+\xi)}{M^2}}_{(6)}+\underbrace{\frac{\beta}{M}}_{(7)}\right] $\\

Let's simplify the expression,$(1)^{'}+(2)^{'}$\\
$-\delta \epsilon +\frac{\beta\epsilon(\Tilde{x}+\xi)}{M^2}=\frac{\epsilon}{M}\left[-\delta\left(1+\alpha\xi+\Tilde{x}+\epsilon \Tilde{y}\right)+\beta(\Tilde{x}+\xi)\right]\\$
By \eqref{proof3.1_3}, $(1)^{'}+(2)^{'}=0$\\

Let's simplify $(3)^{'}+(4)^{'}+(5)^{'}+(6)^{'}+(7)^{'}\\
=\frac{\beta\epsilon^2(\Tilde{x}+\xi)\Tilde{y}}{M^2}-\frac{\beta\epsilon \Tilde{y}}{M^2}-\frac{\beta(\Tilde{x}+\xi)}{M^2}+\frac{\beta}{M}+\frac{2\beta\epsilon(\Tilde{x}+\xi)\Tilde{y}}{M^3}\\
=\frac{\beta}{M^2}\left[-\epsilon^2(\Tilde{x}+\xi)\Tilde{y}-\epsilon \Tilde{y}- (\Tilde{x}+\xi)+1+\alpha\xi+\Tilde{x}+\epsilon \Tilde{y}\right]+\frac{2\beta\epsilon(\Tilde{x}+\xi)\Tilde{y}}{M^3}\\
=\frac{\beta}{M^3}\left[-\epsilon^2(\Tilde{x}+\xi)\Tilde{y}-\xi+1+\alpha\xi+2\epsilon \Tilde{x}\Tilde{y}+2\epsilon\xi \Tilde{y}\right]\\
=\frac{\beta}{M^3}\left[\epsilon \Tilde{x}\Tilde{y} \left(2-\epsilon\right)+\xi\left(\epsilon \Tilde{y}\left(2-\epsilon\right)-1+\alpha\right)+1\right]\\
=\frac{\beta}{M^3}\left[\underbrace{\epsilon \Tilde{x} \Tilde{y}\left(2-\epsilon\right)}_{(+) if \epsilon<1}+\underbrace{\xi\left(\epsilon \Tilde{y}\left(2-\epsilon\right)-1+\alpha\right)+1}_{(+) if \Tilde{y}>\frac{\left(1+\alpha\right)\xi-1}{\left(2-\epsilon\right)\epsilon}}\right]$\\
Hence,  $\frac{J_{11}}{J_{12}}a_3+2\frac{J_{11}}{J_{21}} a_5>0 $ under the conditions of $\Tilde{y}>\frac{\left(1+\alpha\right)\xi-1}{\left(2-\epsilon\right)\epsilon}$ \\
Hence, the proof is concluded.
\end{proof}

\subsection{Slope comparison}

Geometrically, the non-trivial prey nullcline,$y= \frac{\left(k-x\right)\left( 1+\alpha \xi+x \right)}{k-\left(k-x\right)\epsilon}$ has a hyperbolic shape that goes through the positive $y-$axis at the point $\left(0,\frac{1+\alpha\xi}{1-\epsilon}\right)$ and the positive $x-$ axis at the point $\left(k,0\right).$

The slope of the prey nullcline at the point $\left(0,\frac{1+\alpha\xi}{1-\epsilon} \right)$ can be computed by taking the derivative of the prey nullcline and then setting $x$ of that expression to zero.

$ \frac{dy(x)}{dx}=\frac{k(k-x)-k(1+\alpha\xi+x)-(k-x)^{2}\epsilon}{\left[k-(k-x)\epsilon\right]^2}.$\\
The slope at the point $\left(0,\frac{1+\alpha\xi}{1-\epsilon} \right)$ is $ m=\frac{k(1-\epsilon)-(1+\alpha\xi)}{k\left(1-\epsilon\right)^2}$ \\

There exists two interior equilibrium points when the slope $m$ exceeds the slope of the predator nullcline \eqref{eqn:chap3_19}. That is, \\
\begin{equation} \label{eqslopecom1}
    m_1 > m \Longleftrightarrow
\frac{k(1-\epsilon)-(1+\alpha\xi)}{k\left(1-\epsilon\right)^2} > \frac{\beta-\delta}{\delta \epsilon}
\end{equation}
,this yield
\begin{equation} \label{eqslopecom2}
 \xi<\frac{k(1-\epsilon)\left[\delta-\beta(1-\epsilon)\right]-\delta \epsilon}{\alpha \delta \epsilon}
\end{equation}



\begin{thebibliography}{99}



\bibitem{S02}
Prasad, B. S. R. V., Banerjee, M., and Srinivasu, P. D. N. (2013). 
\newblock \emph{Dynamics of additional food provided predator–prey system with mutually interfering predators.}
\newblock Mathematical biosciences, 246(1), 176-190.


\bibitem{S03}
Sabatier, P., Poulenard, J., Fanget, B., Reyss, J. L., Develle, A. L., Wilhelm, B., and Arnaud, F.(2014).
\newblock \emph{Long-term relationships among pesticide applications, mobility, and soil erosion in a vineyard watershed.}
\newblock Proceedings of the National Academy of Sciences, 111(44), 15647-15652.

\bibitem{S04}
Lafferty, K. D., and Kuris, A. M.(1996)
\newblock \emph{Biological control of marine pests.}
\newblock Ecology, , 77(7), 1989-2000.

\bibitem{S05}
Logan, J. A., Regniere, J., and Powell, J. A. (2013).
\newblock \emph{AssessiFng the impacts of global warming
on forest pest dynamics.}
\newblock Frontiers in Ecology and the Environment, 1(3):130–137, 2013

\bibitem{S06}
Rauwald, K. S. and Ives, A. R. (2001).
\newblock \emph{Biological control in disturbed agricultural systems
and the rapid recovery of parasitoid populations.}
\newblock Ecological Applications, 11(4):1224–
1234.

\bibitem{S07}
Shannon, S. P., Chrzanowski, T. H., and Grover, J. P. (2007).
\newblock \emph{Prey food quality affects flagellate ingestion rates.}
\newblock   Microbial ecology, 53(1), 66-73.

\bibitem{S08}
Wetzel, W. C., Kharouba, H. M., Robinson, M., Holyoak, M., and Karban, R. (2016).
\newblock \emph{Variability in plant nutrients reduces insect herbivore performance.}
\newblock Nature, 539(7629), 425-427.

\bibitem{S09}
Tena, A., Pekas, A., Cano, D., Wäckers, F. L., and Urbaneja, A. (2015).
\newblock \emph{Sugar provisioning maximizes the biocontrol service of parasitoids.}
\newblock Journal of Applied Ecology, 52(3), 795-804.

\bibitem{S10}
Evans, E. W., and Swallow, J. G. (1993). 
\newblock \emph{Numerical responses of natural enemies to artificial honeydew in Utah alfalfa.} \newblock Environmental Entomology, 22(6), 1392-1401.

\bibitem{S11}
Sabelis, M. W., and Van Rijn, P. C. (2006). 
\newblock \emph{When does alternative food promote biological pest control?.}
\newblock IOBC WPRS BULLETIN, 29(4), 195.

\bibitem{S12}
Srinivasu, P. D. N., Prasad, B. S. R. V., and Venkatesulu, M. (2007). 
\newblock \emph{Biological control through provision of additional food to predators: a theoretical study.}
\newblock Theoretical Population Biology, 72(1), 111-120.

\bibitem{S13}
Srinivasu, P. D. N., and Prasad, B. S. R. V. (2010). Time optimal control of an additional food provided predator–prey system with applications to pest management and biological conservation. \newblock Journal of mathematical biology, 60(4), 591-613.

\bibitem{S16}
Sen, M., Srinivasu, P. D. N., and Banerjee, M. (2015). 
\newblock \emph{Global dynamics of an additional food provided predator–prey system with constant harvest in predators.}
\newblock Applied Mathematics and Computation, 250, 193-211.



\bibitem{S19}
Harmon, J. P. (2003). 
\newblock \emph{Indirect interactions among a generalist predator and its multiple foods.}
\newblock  University of Minnesota.

\bibitem{S20}
Gonzalez-Yanez, B., Gonzalez-Olivares, E., and  MENA-LORCA, J. A. I. M. E. (2007).
\newblock \emph{Multistability on a Leslie-Gower Type predator-prey model with nonmonotonic functional response.}
\newblock In BIOMAT 2006 (pp. 359-384).


\bibitem{S21}
GONZALEZ-OLIVARES, E. D. U. A. R. D. O., Mena-Lorca, J., MENESES-ALCAY, H. É. C. T. O. R., Gonzalez-Yanez, B., AND FLORES, J. D. (2008).
\newblock \emph{ Allee effect, emigration and immigration in a class of predator-prey models. }
\newblock Biophysical Reviews and Letters, 3(01n02), 195-215.




\bibitem{S23}
Saez, E., and Gonzalez-Olivares, E. (1999).  
\newblock \emph{ Dynamics of a predator-prey model. }
\newblock SIAM Journal on Applied Mathematics, 59(5), 1867-1878. 





\bibitem{S25}
Mena-Lorca, J., Gonzalez-Olivares, E., and Gonzalez-Yanez, B. (2006).
\newblock \emph{ The Leslie-Gower predator-prey model with Allee effect on prey: A simple model with a rich and interesting dynamics.  }
\newblock  In Proceedings of the 2006 International Symposium on Mathematical and Computational Biology BIOMAT (pp. 105-132).

\bibitem{S26}
Guo, Y., Niu, B., and Tian, J. P. (2019).
\newblock \emph{Backward Hopf bifurcation in a mathematical model for oncolytic virotherapy with the infection delay and innate immune effects.}
\newblock  Journal of biological dynamics, 13(1), 733-748.

\bibitem{S27}
Parshad, R. D., Wickramasooriya, S., Antwi-Fordjour, K., and Banerjee, A. (2023).  
\newblock \emph{Additional Food Causes Predators to Explode—Unless the Predators Compete.}
\newblock International Journal of Bifurcation and Chaos, 33(03), 2350034.

\bibitem{S28}
Saputra, K. V. I., van Veen, L.,  and  Quispel, G. R. W. (2010). 
\newblock \emph{The saddle-node-transcritical bifurcation in a population model with constant rate harvesting. Discrete and Continuous.}
\newblock Dynamical Systems-B, 14(1), 233.

\bibitem{S29}
Donohue, J. G., and Piiroinen, P. T. (2020).
\newblock \emph{Normal-form analysis of the cusp-transcritical interaction: applications in population dynamics.}
\newblock Nonlinear Dynamics, 100(2), 1741-1753.



\bibitem{S32}
Cantrell, R. S., and Cosner, C. (2001).
\newblock \emph{On the dynamics of predator–prey models with the Beddington–DeAngelis functional response.}
\newblock Journal of Mathematical Analysis and Applications, 257(1), 206-222.


\bibitem{HS89}
Hofbauer, J.,
So, J.W.H.
\newblock \emph{Multiple limit cycles for predator-prey models}
(1989)




\bibitem{S32a}
Beddington, J. R. (1975).
\newblock \emph{ Mutual interference between parasites or predators and its effect on searching efficiency..}
\newblock The Journal of Animal Ecology, 331-340.

\bibitem{S33}
DeAngelis, D. L., Goldstein, R. A., and  O'Neill, R. V. (1975).
\newblock \emph{A model for tropic interaction.}
\newblock  Ecology, 56(4), 881-892.

\bibitem{S34}
Cantrell, R. S., and Cosner, C. (2001).
\newblock \emph{On the dynamics of predator–prey models with the Beddington–DeAngelis functional response.}
\newblock  Journal of Mathematical Analysis and Applications, 257(1), 206-222.

\bibitem{S35}
Czaja, K., Góralczyk, K., Struciński, P., Hernik, A., Korcz, W., Minorczyk, M., and Ludwicki, J. K. (2015). 
\newblock \emph{Biopesticides–towards increased consumer safety in the European Union.}
\newblock  Pest management science, 71(1), 3-6.

\bibitem{S36}
Coll, M. O. S. H. E. (1998). 
\newblock \emph{ Parasitoid activity and plant species composition in intercropped systems.}
\newblock Enhancing Biological Control–Habitat Management to Promote Natural Enemies of Agricultural Pests, 85-119. 

\bibitem{S37}
Berkvens, N., Bonte, J., Berkvens, D., Deforce, K., Tirry, L., and De Clercq, P. (2008).
\newblock \emph{Pollen as an alternative food for Harmonia axyridis.}
\newblock From biological control to invasion: the ladybird Harmonia axyridis as a model species, 201-210.


\bibitem{S40}
Srinivasu, P. D. N., and Prasad, B. S. R. V. (2011). 
\newblock \emph{Role of quantity of additional food to predators as a control in predator–prey systems with relevance to pest management and biological conservation.}
\newblock Bulletin of mathematical biology, 73(10), 2249-2276.


\bibitem{S41}
Srinivasu, P. D. N., Vamsi, D. K. K., and Ananth, V. S. (2018). 
\newblock \emph{Additional food supplements as a tool for biological conservation of predator-prey systems involving type III functional response: A qualitative and quantitative investigation.}
\newblock Journal of theoretical biology, 455, 303-318



\bibitem{S42}
Srinivasu, P. D. N., Vamsi, D. K. K., and  Aditya, I. (2018). 
\newblock \emph{Biological conservation of living systems by providing additional food supplements in the presence of inhibitory effect: a theoretical study using predator–prey models.}
\newblock Differential Equations and Dynamical Systems, 26, 213-246.

\bibitem{S43}
Sasmal, S. K., Mandal, D. S., and Chattopadhyay, J. (2017). 
\newblock \emph{A predator-pest model with Allee effect and pest culling and additional food provision to the predator—application to pest control.} 
\newblock Journal of Biological Systems, 25(02), 295-326.


\bibitem{S44}
Parshad, R. D., Wickramsooriya, S., and Bailey, S. (2019).
\newblock \emph{A remark on “Biological control through provision of additional food to predators: A theoretical study”[Theor. Popul. Biol. 72 (2007) 111–120].}
\newblock  Theoretical Population Biology.

\bibitem{S45}
Kumar, D.,  Chakrabarty, S. P. (2018). A predator–prey model with additional food supply to predators: dynamics and applications. Computational and Applied Mathematics, 37, 763-784.

\bibitem{S46}
Kumar, D.,  Chakrabarty, S. P. (2015). A comparative study of bioeconomic ratio-dependent predator–prey model with and without additional food to predators. Nonlinear Dynamics, 80, 23-38.

\bibitem{S47}
 Parshad, R. D., Quansah, E., Black, K., Beauregard, M. (2016). Biological control via “ecological” damping: an approach that attenuates non-target effects. Mathematical biosciences, 273, 23-44.

\bibitem{S48}
Erbe, L. H.,  Freedman, H. I. (1985). Modeling persistence and mutual interference among subpopulations of ecological communities. Bulletin of mathematical biology, 47(2), 295-304.


\bibitem{S49}
Ananth, V. S.,  Vamsi, D. K. K. (2022). Achieving minimum-time biological conservation and pest management for additional food provided predator–prey systems involving inhibitory effect: A qualitative investigation. Acta Biotheoretica, 70(1), 5.

\bibitem{S50}
Chicone, C. (2006). Ordinary differential equations with applications (Vol. 34). Springer Science \& Business Media.

\bibitem{S51}
Dumortier, F., Llibre, J., \& Artés, J. C. (2006). Qualitative theory of planar differential systems (Vol. 2). Berlin: Springer.

\bibitem{S52}
González-Olivares, E., Mena-Lorca, J., Rojas-Palma, A., \& Flores, J. D. (2011). Dynamical complexities in the Leslie–Gower predator–prey model as consequences of the Allee effect on prey. Applied Mathematical Modelling, 35(1), 366-381.


\bibitem{S54}
Gonźalez-Olivares, E., Arancibia-Ibarra, C., Rojas-Palma, A., \& Gonźalez-Yãnez, B. (2019). Bifurcations and multistability on the May-Holling-Tanner predation model considering alternative food for the predators. Mathematical Biosciences and Engineering, 16(5), 4274-4298.

\bibitem{S55}
Antwi-Fordjour, K., Parshad, R. D., \& Beauregard, M. A. (2020). Dynamics of a predator–prey model with generalized Holling type functional response and mutual interference. Mathematical biosciences, 326, 108407.

\end{thebibliography}
\end{document}